\documentclass[11pt,a4paper]{article}
\usepackage{amsmath}
\usepackage{amsthm}
\usepackage{amsfonts, amssymb, amscd, latexsym}
\usepackage{eucal, enumerate, latexsym}
\usepackage{graphics}
\usepackage{graphicx}
\usepackage[hidelinks]{hyperref}
\usepackage[T1]{fontenc}
\topmargin -.2cm

\usepackage{calc}
\setlength\textwidth{155mm}
\setlength\textheight{220mm}
\setlength\oddsidemargin{(\paperwidth-\textwidth)/2 - 1in}
\setlength\topmargin{(\paperheight-\textheight - \headheight - \headsep - \footskip)/2 - 1in}

\newcommand{\R}{\mathbb R}

\newtheorem{theorem}{Theorem}[section]
\newtheorem{corollary}[theorem]{Corollary}
\newtheorem{proposition}[theorem]{Proposition}
\newtheorem{lemma}[theorem]{Lemma}
\newtheorem{definition}[theorem]{Definition}
\newtheorem{remark}[theorem]{Remark}

\newcommand{\orho}{\overline{\rho}}

\newcommand{\ds}{\displaystyle}

\delimitershortfall=-0.1pt
\renewcommand{\epsilon}{\varepsilon}

\renewcommand{\phi}{\varphi}

\newcommand{\C}[1]{\mathbf{C^{#1}}}
\newcommand{\Cc}[1]{\mathbf{C_c^{#1}}}
\newcommand{\Lloc}[1]{\mathbf{L^{#1}_{loc}}}

\newcommand{\ins}{\mathsf{I_0^c}}
\newcommand{\is}{\mathsf{I_0}}
\newcommand{\EL}{f_\ell^{-1}}
\newcommand{\ER}{f_r^{-1}}


\makeatletter

\newlength{\captionwidth}
\setlength{\captionwidth}{.9\textwidth}

\long\def\@makecaption#1#2{%
 \vskip 10\p@
 \setbox\@tempboxa\hbox{#1: #2}%
 \ifdim \wd\@tempboxa > \captionwidth 
  \hbox to\hsize{\hfil
  \parbox[t]{\captionwidth}{
  \small#1: \small#2\par}
  \hfil}
  \else
  \hbox to\hsize{\hfil\box\@tempboxa\hfil}%
 \fi}

\makeatother


\usepackage[usenames, dvipsnames]{color}

\numberwithin{equation}{section}

\let\originalleft\left
\let\originalright\right
\renewcommand{\left}{\mathopen{}\mathclose\bgroup\originalleft}
\renewcommand{\right}{\aftergroup\egroup\originalright}

\usepackage{psfrag}

\begin{document}

\allowdisplaybreaks

\title{Traveling waves for degenerate diffusive equations on networks}

\author{Andrea Corli\\
\small\textit{Department of Mathematics and Computer Science}\\
\small\textit{University of Ferrara, I-44121 Italy}
\\
\and Lorenzo di Ruvo\\
\small\textit{Department of Sciences and Methods for Engineering}\\
\small\textit{University of Modena and Reggio Emilia, I-42122 Italy}
\\
\and Luisa Malaguti\\
\small\textit{Department of Sciences and Methods for Engineering}\\
\small\textit{University of Modena and Reggio Emilia, I-42122 Italy} 
\\
\and Massimiliano D.~Rosini\\
\small\textit{Department of Mathematics}\\
\small\textit{Maria Curie-Sk{\l}odowska-University, PL-20031 Poland}}


\maketitle
\begin{abstract}
In this paper we consider a scalar parabolic equation on a star graph; the model is quite general but what we have in mind is the description of traffic flows at a crossroad. In particular, we do not necessarily require the continuity of the unknown function at the node of the graph and, moreover, the diffusivity can be degenerate. Our main result concerns a necessary and sufficient algebraic condition for the existence of traveling waves in the graph. We also study in great detail some examples corresponding to quadratic and logarithmic flux functions, for different diffusivities, to which our results apply.
 
\vspace{1cm}
\noindent \textbf{AMS Subject Classification:} 35K65; 35C07, 35K55, 35K57

\smallskip
\noindent
\textbf{Keywords:} Parabolic equations, wavefront solutions, traffic flow on networks.
\end{abstract}

\section{Introduction}
Partial differential equations on networks have been considered in the last years by several authors, in particular in the parabolic case; we quote for instance \cite{Dager-Zuazua, Garavello-Han-Piccoli_book, Garavello-Piccoli_book, Lagnese-Leugering-Schmidt, Pokornyi-Borovskikh, Below-Thesis}. According to the modeling in consideration and to the type of equations on the edges of the underlying graph, different conditions at the nodes are imposed. In most of the cases, precise results of existence of solutions are given, even for rather complicated networks.

In this paper, the main example we have in mind comes from traffic modeling, where the network is constituted by a crossroad connecting $m$ incoming roads with $n$ outgoing roads; the traffic in each road is modeled by the scalar diffusive equation
\begin{align}\label{e:system}
&\rho_{h,t} + f_h(\rho_h)_x = \left(D_h(\rho_h)\rho_{h,x}\right)_x,&
&h = 1,\ldots,m+n,
\end{align}
where $t$ denotes time and $x$ the position along the road.
In this case $\rho_h$ is a vehicle density; about the diffusivity $D_h(\rho_h)\ge0$ we do not exclude that it may vanish at some points. System \eqref{e:system} is completed by a condition of flux conservation at the crossroad, which implies the conservation of the total number of cars. Such a model is derived from the famous Lighthill-Whitham-Richards equation \cite{Lighthill-Whitham, Richards}. We refer to \cite{BTTV, Helbing2001, Lighthill-Whitham, Nelson_2000, Payne, SchonhofHelbing2007} for several motivations about the introduction of (possibly degenerate) diffusion in traffic flows and in the close field of crowds dynamics. We also refer to the recent books \cite{Garavello-Han-Piccoli_book, Garavello-Piccoli_book, Rosinibook} for more information on the related hyperbolic modeling.

We focus on a special class of solutions to \eqref{e:system}, namely, {\em traveling waves}. In the case of a single road, traveling waves are considered, for instance, in \cite{Nelson_2002}; in the case of a second-order model without diffusion but including a relaxation term, we refer to \cite{Flynn-Kasimov-Nave-Rosales-Seibold, Seibold-Flynn-Kasimov-Rosales}; for a possibly degenerate diffusion function and in presence of a source term, detailed results are given in \cite{Corli-diRuvo-Malaguti, Corli-Malaguti}. In the case of a network, the papers dealing with this subject, to the best of our knowledge, are limited to \cite{Below-Thesis, Below1994} for the semilinear diffusive case and to  \cite{Mugnolo-Rault} for the case of a dispersive equation. In these papers, as in most modeling of diffusive or dispersive partial differential equations on networks, both the continuity of the unknown functions and the Kirchhoff condition (or variants of it) are imposed at the nodes. We emphasize that while the {\em classical} Kirchhoff condition implies the conservation of the flow and then that of the mass, some variants of this condition are dissipative and, then, imply none of the conservations above. While these assumptions are natural when dealing with heat or fluid flows, they are much less justified in the case of traffic modeling, where the density must be allowed to jump at the node while the conservation of the mass must always hold. Moreover, they impose rather strong conditions on the existence of the profiles, which often amount to proportionality assumptions on the parameters in play.

In this paper we only require the conservation of the (parabolic) flux at the node, as in \cite{Coclite-Garavello}; differently from that paper and the other ones quoted above, we do {\em not} impose the continuity condition. A strong motivation for dropping this condition comes from the hyperbolic modeling \cite{ambroso2009coupling, Garavello-Han-Piccoli_book, Garavello-Piccoli_book, Rosinibook}; nevertheless, we show how our results simplify when such a condition is required. In particular, in Sections \ref{s:Greenshields} and \ref{s:log-fl} we provide explicit conditions for traveling wave solutions which do not satisfy the continuity condition; in some other cases, such a condition is indeed always satisfied. Our main results are essentially of algebraic nature and concern conditions about the end states, flux functions, diffusivities and other parameters which give rise to a traveling wave moving in the network. 

Here follows a plan of the paper. In Section \ref{s:m} we introduce the model and give some basic definitions; for simplicity we only focus on the case of a star graph. Section \ref{sec:TW1} deals with a general existence result in the case of a single equation; its proof is provided in Appendix \ref{s:A}. Section \ref{s:TWN} contains our main theoretical results about traveling waves in a network. In that section we characterize both  stationary/non-stationary and degenerate/non-degenerate waves; in particular, Theorem \ref{thm:1} contains an important necessary and sufficient condition that we exploit in the following sections. Section \ref{s:CC} focus on the continuity condition; in this case the conditions for the existence of traveling wave solutions are much stricter than in the previous case. Detailed applications of these results are provided in Sections \ref{s:Greenshields} for quadratic fluxes and in Section \ref{s:log-fl} for logarithmic fluxes; in particular, in subsection \ref{s:fGdL} and in the whole Section \ref{s:log-fl} the diffusivity is as in \cite{BTTV}. For simplicity, we only deal there with the case of a single ingoing road but we consider both constant and degenerate diffusivities.


\section{The model}\label{s:m}
In terms of graph theory, we consider a semi-infinite star-graph with $m$ incoming and $n$ outgoing edges; this means that the incidence vector $d\in\R^{m+n}$ has components $d_i=1$ for $i\in\mathsf{I} \doteq \{ 1, \ldots, m \}$ and $d_j=-1$ for $j\in\mathsf{J} \doteq \{ m+1, \ldots, m+n \}$. We also denote $\mathsf{H} \doteq \{ 1,\ldots, m+n \}$ and refer to Figure \ref{fig:net}. For simplicity, having in mind the example in the Introduction, we always refer to the graph as the network, to the node as the crossroad and to the edges as the roads. Then, incoming roads are parametrized by $x \in \R_-\doteq(-\infty,0]$ and numbered by the index $i$, outgoing roads by $x \in \R_+\doteq[0,\infty)$ and $j$; the crossroad is located at $x=0$ for both parameterizations. We denote the generic road by $\Omega_h$ for $h \in \mathsf{H}$; then $\Omega_i \doteq \R_-$ for $i \in \mathsf{I}$ and $\Omega_j \doteq \R_+$ for $j \in \mathsf{J}$. The network is defined as $\mathcal{N}\doteq \prod_{h\in\mathsf{H}}\Omega_h$.


\begin{figure}[htbp]\centering
\begin{psfrags}
      \psfrag{1}[l,B]{$\Omega_1$}
      \psfrag{2}[l,B]{$\Omega_2$}
      \psfrag{3}[l,B]{$\stackrel{\vdots}{\Omega}_i$}
      \psfrag{4}[l,B]{$\stackrel{\vdots}{\Omega}_m$}
      \psfrag{5}[l,B]{$\Omega_{m+1}$}
      \psfrag{6}[l,B]{$\stackrel{\vdots}{\Omega}_{m+2}$}
      \psfrag{7}[l,B]{$\stackrel{\vdots}{\Omega}_j$}
      \psfrag{8}[l,B]{$\Omega_{m+n}$}
\includegraphics[width=.3\textwidth]{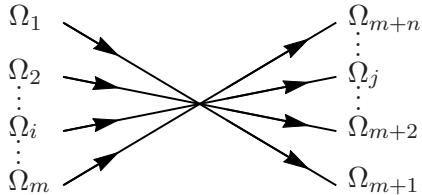}
\end{psfrags}
\caption{A network.}
\label{fig:net}
\end{figure}


Following the above analogy, we understand the unknown functions $\rho_h$ as vehicular densities in the roads $\Omega_h$, $h\in\mathsf{H}$; $\rho_h$ ranges in $[0,\orho_h]$, where $\orho_h$ is the maximal density in the road $\Omega_h$. Without loss of generality we assume that $\orho_h=1$ for every $h\in\mathsf{H}$; the general case is easily recovered by a change of variables and modifying \eqref{eq:Queen}-\eqref{e:1rhojrhoi} below for a multiplicative constant. With a slight abuse of notation we denote $\rho \doteq (\rho_1, \ldots, \rho_{m+n}): \R \times \mathcal{N} \to [0,1]^{m+n}$ understanding that $\rho(t,x_1,\ldots, x_{m+n})=(\rho_1(t,x_1),\ldots,\rho_{m+n}(t,x_{m+n}))$.

For each road we assign the functions $f_h$, the hyperbolic flux, and $D_h$, the diffusivity; we assume for every $h \in \mathsf{H}$
\begin{itemize}

\item[(f)] $f_h\in\mathbf{C}^1([0,1];\R_+)$ is strictly concave with $f_h(0)=f_h(1)=0$;

\item[(D)] $D_h\in\mathbf{C}^1([0,1];\R_+)$ and $D_h(\rho)>0$ for any $\rho \in (0,1)$.

\end{itemize}
We emphasize that in (D) we can possibly have either $D_h(0)=0$ or $D_h(1)=0$, or even both possibilities at the same time. The evolution of the flow is described by the equations
\begin{align}\label{eq:model}
&\rho_{h,t} + f_h(\rho_h)_x = \left(D_h(\rho_h)\rho_{h,x}\right)_x,&
&(t,x) \in \mathbb{R} \times \Omega_h,~ h \in \mathsf{H}.
\end{align}
Assumption (f) is standard when dealing with traffic flows \cite{BCGHP}. More precisely, in that case $f_h(\rho_h)=\rho_h \, v_h(\rho_h)$, where $v_h$ is the velocity.
Then, assumption (f) is satisfied if, for instance, $v_h\in\mathbf{C}^2([0,1];\R_+)$ is either linear or strictly concave, decreasing and satisfying $v_h(1) = 0$, see \cite{Lighthill-Whitham, Richards}.
The prototype of such a velocity satisfying (f) is $v_h(\rho) = V_h (1-\rho)$ with $V_h > 0$, which was introduced in \cite{Greenshields}; another example is given in \cite{pip}.
The simplest model for the diffusivity is then $D_h(\rho_h) = -\delta_h\rho_h \, v_h'(\rho_h)$, where $\delta_h$ is an anticipation length \cite{BTTV, Nelson_2002}.

The coupling among the differential equations in \eqref{eq:model} occurs by means of suitable conditions at the crossroad. In this paper, having in mind the previous example, we impose a condition on the {\em conservation of the total flow} at the crossroad, see \cite{Coclite-Garavello, Coclite-Garavello-Piccoli}; in turn, this implies the conservation of the mass. More precisely, we define the {\em parabolic flux} by
\[
F_h(\rho_h,\rho_{h,x}) \doteq f_h(\rho_h) - D_h(\rho_h) \, \rho_{h,x}
\]
and require
\begin{align}\label{eq:Queen}
&F_j\left(\rho_j(t,0^+), \rho_{j,x}(t,0^+)\right)
= \sum_{i \in \mathsf{I}} \alpha_{i,j} \, F_i\left(\rho_i(t,0^-), \rho_{i,x}(t,0^-)\right)
&\hbox{ for a.e.\ }t\in\R,~j \in \mathsf{J},
\end{align}
for given constant coefficients $\alpha_{i,j} \in (0, 1]$ satisfying
\begin{align}
\sum_{j \in \mathsf{J}} \alpha_{i,j} = 1,&
&i \in \mathsf{I}.
\label{e:1rhojrhoi}
\end{align}
Conditions \eqref{eq:Queen} and \eqref{e:1rhojrhoi} imply
\begin{align}\label{e:mass-cons}
&\sum_{j \in \mathsf{J}} F_j\left(\rho_j(t,0^+), \rho_{j,x}(t,0^+)\right)
=
\sum_{i \in \mathsf{I}} F_i\left(\rho_i(t,0^-), \rho_{i,x}(t,0^-)\right)
&\hbox{ for a.e.\ }t\in\R,
\end{align}
which is the conservation of the total flow at the crossroad. Conditions \eqref{eq:Queen} and \eqref{e:1rhojrhoi} deserve some comments. First, by no means they imply
\begin{align}\label{e:dens_cont}
&\rho_i(t,0^-) = \rho_j(t,0^+),&
&t \in \R, ~ (i,j)  \in \mathsf{I} \times \mathsf{J}.
\end{align}
Condition \eqref{e:dens_cont} is largely used, together with some Kirchhoff conditions, when dealing with parabolic equations in networks and takes the name of {\em continuity condition}. Second, above we assumed $\alpha_{i,j}>0$ for every $i$ and $j$. The case when $\alpha_{i,j}=0$ for some $i$ and $j$ would take into account the possibility that some outgoing $j$ roads are not allowed to vehicles coming from some incoming $i$ roads; this could be the case, for instance, if only trucks are allowed in road $i$ but only cars are allowed in road $j$. For simplicity, we do not consider this possibility. Third, we notice that assumption \eqref{eq:Queen} destroys the symmetry of condition \eqref{e:mass-cons}; indeed, with reference to the example of traffic flow, the loss of symmetry is due to the fact that all velocities $v_h$ are positive.

Then, we are faced with the system of equations \eqref{eq:model} that are coupled through \eqref{eq:Queen}, with the $\alpha_{i,j}$ satisfying \eqref{e:1rhojrhoi}. Solutions to \eqref{eq:model}-\eqref{eq:Queen} are meant in the weak sense, namely $\rho_h\in\mathbf{C^1}(\R\times\Omega_h;[0,1])$ a.e.; see also \cite{BCGHP, Garavello-Piccoli_book} for an analogous definition in the hyperbolic case. We do not impose any initial condition because we only consider traveling waves, which are introduced in the next sections.


\section{Traveling waves for a single equation}\label{sec:TW1}

In this section we briefly remind some definitions and results about traveling waves \cite{GK} for the {\em single} equation
\begin{align}\label{e:one}
&\rho_{h,t} + f_h(\rho_h)_x = \left(D_h(\rho_h)\rho_{h,x}\right)_x,&
&(t,x) \in \mathbb{R} \times \Omega_h,
\end{align}
where we keep for future reference the index $h$. Equation \eqref{e:one} has no source terms (differently from \cite{Below-Thesis, Below1994}) and then any constant is a solution; for simplicity we discard constant solutions in the following analysis.

\begin{definition}\label{def:solTWh} A weak solution $\rho_h(t,x)$  to \eqref{e:one} is a {\em traveling-wave solution} of \eqref{e:one} if $\rho_h(t,x) = \phi_h(x-c_h t)$ for $(t,x) \in \mathbb{R} \times \Omega_h$, for a non-constant profile $\phi_h:\R\to[0,1]$ and speed $c_h \in \R$.
\end{definition}

This definition coincides with that given in \cite{Mugnolo-Rault, Below_1988} because we are considering non-constant profiles. The profile must satisfy the equation
\begin{equation}\label{e:TheMarsVolta}
\left[F_h(\phi_h,\phi_h')- c_h \phi_h \right]'=0,
\end{equation}
namely,
\begin{align}\label{eq:phi}
&\left(D_h(\phi_h)\phi_h'\right)' - g_h'(\phi_h) \, \phi_h'=0,
\end{align}
in the weak sense, where
\begin{equation}\label{e:g}
g_h(\rho) \doteq f_h(\rho) - c_h \, \rho
\end{equation}
is the reduced flux, see \figurename~\ref{fig:gh}.

\begin{figure}[htbp]\centering
\begin{psfrags}
      \psfrag{a}[r,B]{$g_h(\ell_h^\pm)$}
      \psfrag{b}[c,B]{$\ell_h^-$}
      \psfrag{c}[c,B]{$\ell_h^+$}
      \psfrag{1}[c,B]{$1$}
      \psfrag{r}[c,B]{$\rho$}
      \psfrag{h}[l,c]{$y=c_h\,\rho$}
      \psfrag{y}[r,c]{$y$}
\includegraphics[width=.3\textwidth]{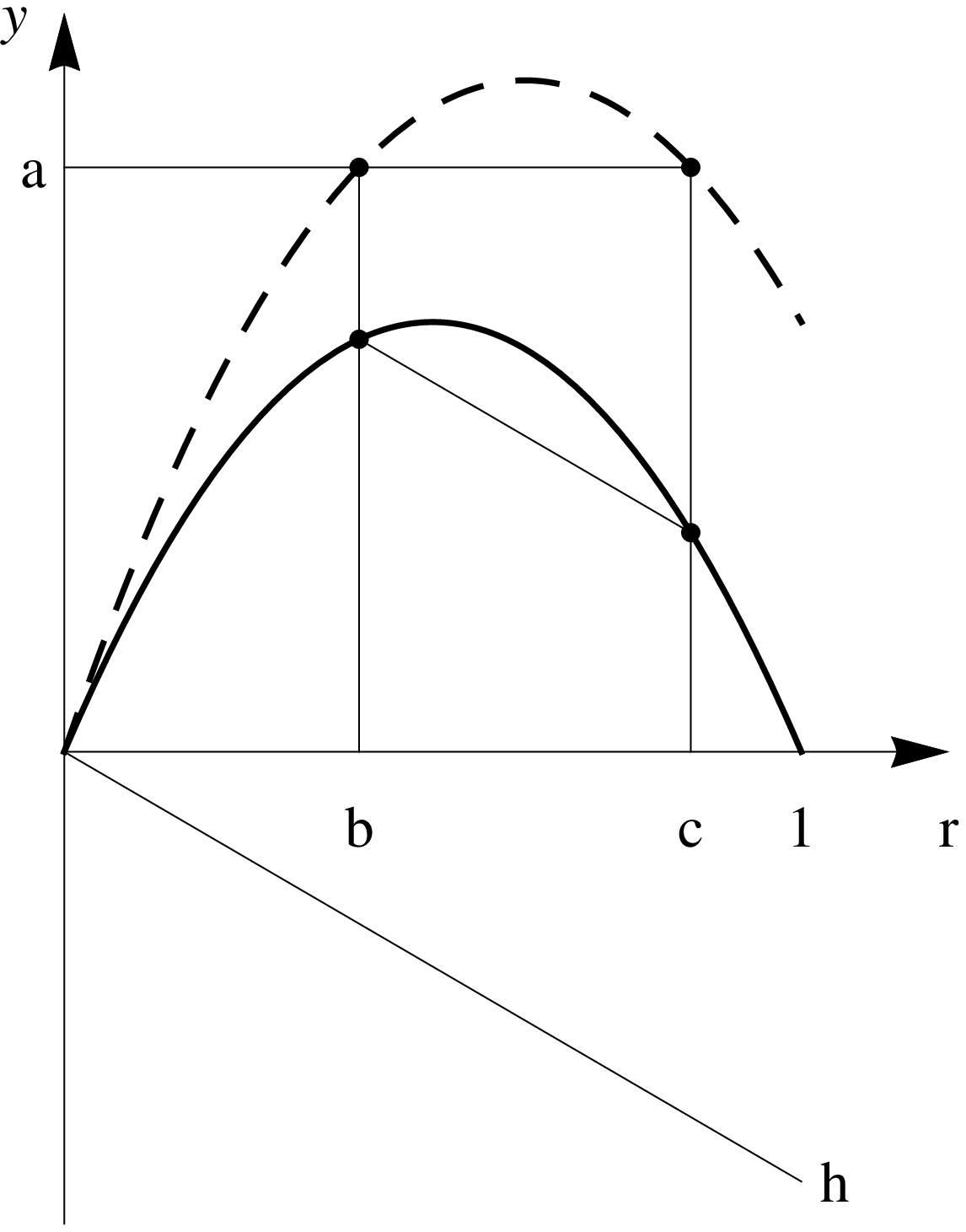}\hspace{.1\textwidth}
\includegraphics[width=.3\textwidth]{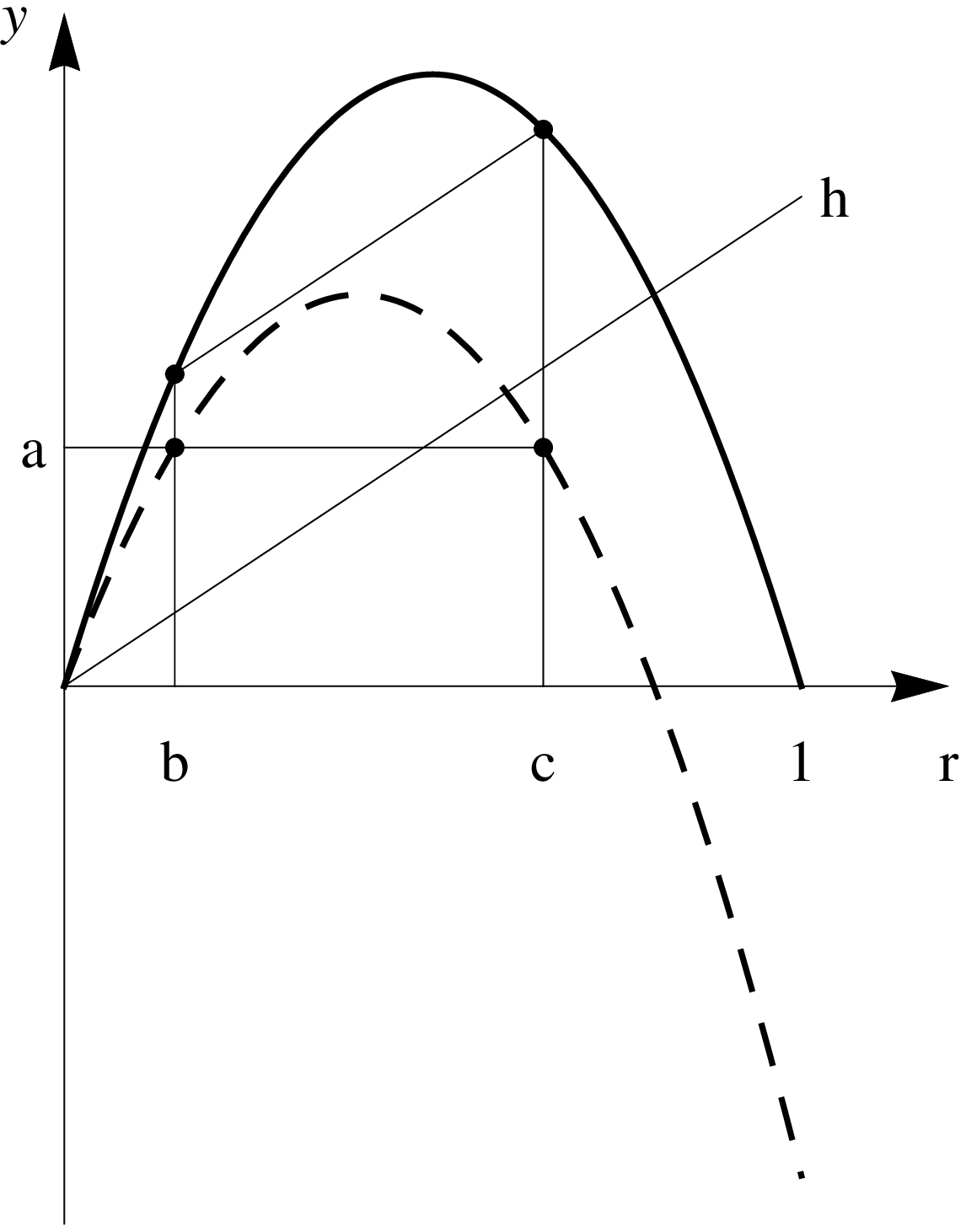}
\end{psfrags}
\caption{
{A flux $f_h$ satisfying (f), solid curve, and the corresponding reduced flux $g_h$ defined in \eqref{e:g}, dashed curve, in the case $c_h<0$, left, and in the case $c_h>0$, right.}}\label{fig:gh}
\end{figure}

This means that $\phi_h \in \C0(\mathbb{R}; [0,1])$, $D_h(\phi_h) \, \phi_h'\in \Lloc1(\mathbb{R};\mathbb{R})$ and
\begin{equation*}
\int_{\mathbb{R}} \left[D_h\left(\phi_h(\xi)\right)\phi_h'(\xi) - g_h\left(\phi_h(\xi)\right) \right] \psi'(\xi) \, {\rm d}\xi =0,
\end{equation*}
for every $\psi\in \Cc\infty(\mathbb{R};\mathbb{R})$. Equation \eqref{eq:phi} is  coupled with the limit conditions
\begin{equation}\label{e:infty}
\phi_h(\pm\infty) = \ell_h^\pm,
\end{equation}
for $\ell_h^\pm\in[0,1]$. Clearly, solutions to \eqref{eq:phi}-\eqref{e:infty} are determined up to a shift. We define
\begin{equation}\label{e:Ih}
I_h \doteq \left\{\xi \in \mathbb{R}\, : \, \ell_h^-<\phi_h(\xi)<\ell_h^+\right\}.
\end{equation}

The existence of profiles is a well-established result \cite{GK}; nevertheless, we state for completeness the following theorem, where we point out the qualitative properties of these fronts. The proof is deferred to Appendix \ref{s:A}.

\begin{theorem}\label{t:E}
Assume {\rm (f)} and {\rm (D)}. Equation \eqref{e:one} admits a traveling-wave solution $\rho_h$ with profile $\phi_h$ satisfying \eqref{e:infty} if and only if
\begin{align}\label{eq:ch}
&0\le\ell_h^- < \ell_h^+\le1&&\hbox{ and }&&c_h = \dfrac{f_h(\ell_h^+) - f_h(\ell_h^-)}{\ell_h^+ - \ell_h^-}.
\end{align}
We have that $\phi_h \in \mathbf{C}^2\left(I_h; (\ell_h^-, \ell_h^+)\right)$ is unique (up to shifts) and $\phi_h'(\xi)>0$ for $\xi \in I_h$; moreover, the following holds true.

\begin{itemize}
\item[{(i)}] $D_h(0)=0=\ell_h^-$ if and only if there exists $\nu_h^- \in \mathbb{R}$ such that $I_h \subseteq (\nu_h^-,\infty)$ and $\phi_h(\xi)=0$ for $\xi \le \nu_h^-$.
In this case
\begin{align}
\label{e:slope0}
&\lim_{\xi \downarrow \nu_h^-}\phi_h'(\xi)=\begin{cases}
\frac{\ell_h^+f_h'(0)-f_h(\ell_h^+)}{\ell_h^+D_h'(0)}& \hbox{if } D_h'(0)>0,\\
\infty & \hbox{if } D_h'(0)=0,
\end{cases}
\\
\label{eq:muh}
&\lim_{\xi \downarrow \nu_h^-}D_h\left(\phi_h(\xi)\right)\phi_h'(\xi) = 0.
\end{align}

\item[{(ii)}] $D_h(1)=0=1-\ell_h^+$ if and only if there exists $\nu_h^+ \in \mathbb{R}$ such that $I_h \subseteq (-\infty,\nu_h^+)$ and $\phi_h(\xi)=1$ for $\xi \ge \nu_h^+$.
In this case
\begin{align}\label{e:slope1}
&\lim_{\xi \uparrow \nu_h^+}\phi_h'(\xi)=\begin{cases}
\frac{\left(1-\ell_h^-\right)f_h'(1)+f_h(\ell_h^-)}
{\left(1-\ell_h^-\right) D_h'(1)}& \hbox{if } D_h'(1)<0,\\
\infty & \hbox{if } D_h'(1)=0,
\end{cases}
\\
\label{eq:nuh}
&\lim_{\xi \uparrow \nu_h^+}D_h\left(\phi_h(\xi)\right)\phi_h'(\xi) = 0.
\end{align}

\item[{(iii)}] In all the other cases $I_h=\mathbb{R}$  and
\begin{align}\label{eq:TosinAbasi}
\lim_{\xi \to \pm\infty}\phi_h'(\xi) = 0.
\end{align}
\end{itemize}
\end{theorem}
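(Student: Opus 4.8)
The plan is to reduce the second-order equation \eqref{eq:phi} to a first-order ODE by integrating once, and then analyze the resulting autonomous problem on the phase line via a careful study of the vector field near the endpoints $\ell_h^\pm$. First I would integrate \eqref{e:TheMarsVolta}: any weak solution satisfies $D_h(\phi_h)\phi_h' - g_h(\phi_h) = k$ for some constant $k\in\R$, where $g_h$ is the reduced flux \eqref{e:g}. Evaluating the limits as $\xi\to\pm\infty$ and using \eqref{e:infty} together with the fact that $D_h(\phi_h)\phi_h'$ must tend to $0$ (since $\phi_h$ is bounded and monotone, so $\phi_h'\to 0$ along a sequence, while $D_h$ is bounded) forces $g_h(\ell_h^-) = g_h(\ell_h^+) = -k$, i.e. $f_h(\ell_h^+) - f_h(\ell_h^-) = c_h(\ell_h^+ - \ell_h^-)$, which is exactly the Rankine--Hugoniot-type formula for $c_h$ in \eqref{eq:ch}. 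That $\ell_h^- < \ell_h^+$ (strict, and that these are the right inequalities rather than $\ell_h^->\ell_h^+$) will come from the sign analysis below, combined with non-constancy of the profile; strict concavity of $f_h$ guarantees the secant line $y=g_h(\rho)+k$ lies strictly below the graph of $f_h$ on $(\ell_h^-,\ell_h^+)$, hence $g_h(\phi_h) > -k$ there, which gives $\phi_h' = (g_h(\phi_h)+k)/D_h(\phi_h) > 0$ on $I_h$ and $\phi_h\in\mathbf{C}^2(I_h)$ by the implicit function theorem / bootstrapping since $D_h(\phi_h)>0$ on $(\ell_h^-,\ell_h^+)$. Uniqueness up to shift is immediate from the first-order autonomous ODE.

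Next I would establish the trichotomy (i)--(iii) by examining the behavior of $\phi_h$ as it approaches each endpoint. Writing $\xi$ as a function of $\phi_h$ on $I_h$, we have $\mathrm{d}\xi = \frac{D_h(\rho)}{g_h(\rho)+k}\,\mathrm{d}\rho$, so whether the endpoint $\ell_h^-$ (resp. $\ell_h^+$) is reached at a finite $\xi = \nu_h^-$ (resp. $\nu_h^+$) or only as $\xi\to-\infty$ (resp. $+\infty$) is governed by the convergence or divergence of the integral $\int D_h(\rho)/(g_h(\rho)+k)\,\mathrm{d}\rho$ near that endpoint. Near $\ell_h^- = 0$ with $D_h(0)=0$: since $g_h(\rho)+k$ vanishes to first order at $\rho=0$ (as $g_h'(0) = f_h'(0)-c_h \neq 0$ by strict concavity, as soon as $0<\ell_h^+$, because the secant slope is strictly between $f_h'(\ell_h^+)$ and $f_h'(0)$), the integrand behaves like $D_h(\rho)/(g_h'(0)\rho)$, which is integrable at $0$ precisely because $D_h(0)=0$ makes the apparent singularity removable; this yields the finite left endpoint $\nu_h^-$ and $I_h\subseteq(\nu_h^-,\infty)$. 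Conversely, if $D_h(0)>0$ or $\ell_h^- > 0$, the integrand is bounded below near $\ell_h^-$ by a positive constant over $(\rho-\ell_h^-)$ (when $g_h'(\ell_h^-)\ne 0$) forcing divergence, or one checks the remaining sub-case; either way $\ell_h^-$ is attained only at $-\infty$. The symmetric argument at $\ell_h^+$ handles the condition $D_h(1)=0=1-\ell_h^+$. Case (iii) is then the complement: both endpoints are attained only at $\pm\infty$, so $I_h = \R$.

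The slope formulas \eqref{e:slope0}, \eqref{e:slope1} follow by computing $\lim_{\xi\downarrow\nu_h^-}\phi_h'(\xi) = \lim_{\rho\downarrow 0}\frac{g_h(\rho)+k}{D_h(\rho)}$ and applying l'Hôpital (both numerator and denominator vanish at $\rho = 0$): the limit is $\frac{g_h'(0)}{D_h'(0)} = \frac{f_h'(0)-c_h}{D_h'(0)}$ when $D_h'(0)>0$, and after substituting $c_h = (f_h(\ell_h^+)-f_h(\ell_h^-))/(\ell_h^+-\ell_h^-)$ with $\ell_h^-=0$ this is exactly $\frac{\ell_h^+ f_h'(0) - f_h(\ell_h^+)}{\ell_h^+ D_h'(0)}$; when $D_h'(0)=0$ the denominator vanishes to higher order and the ratio blows up, giving the $\infty$ case. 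The relations \eqref{eq:muh}, \eqref{eq:nuh} are then immediate since $D_h(\phi_h)\phi_h' = g_h(\phi_h)+k \to g_h(0)+k = 0$ as $\xi\downarrow\nu_h^-$ (and symmetrically at $\nu_h^+$), and \eqref{eq:TosinAbasi} in case (iii) follows from $\phi_h'\to 0$ at $\pm\infty$, which holds because a bounded monotone $\mathbf{C}^2$ function with $\phi_h'$ itself converging (as $\phi_h' = (g_h(\phi_h)+k)/D_h(\phi_h)$ is a continuous function of $\phi_h$, which converges) must have $\phi_h'\to 0$.

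The main obstacle I anticipate is the careful case analysis at the endpoints when the reduced flux has a \emph{degenerate} contact with the secant line, i.e. when $g_h'(\ell_h^\pm) = 0$ — but strict concavity of $f_h$ rules this out for \emph{interior} endpoints and, at $\rho\in\{0,1\}$, it can only happen if the secant slope $c_h$ equals $f_h'(0)$ or $f_h'(1)$, which by strict concavity again forces $\ell_h^+=0$ or $\ell_h^-=1$, contradicting $\ell_h^-<\ell_h^+$ in $[0,1]$; so in fact $g_h'(\ell_h^\pm)\ne 0$ always, and the linearization-based integrability dichotomy is clean. The second delicate point is making the "integrate once" step rigorous at the weak-solution level — one must argue that $D_h(\phi_h)\phi_h'\in\Lloc1$ together with the distributional identity forces $D_h(\phi_h)\phi_h' - g_h(\phi_h)$ to equal a constant a.e., and then upgrade regularity on $I_h$; this is standard (the distributional derivative of an $L^1_{loc}$ function being zero implies it is a.e. constant) but should be stated explicitly.
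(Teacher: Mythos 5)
Your proposal is correct and follows essentially the same route as the paper's proof in Appendix~\ref{s:A}: reduce to the first-order equation $D_h(\phi_h)\phi_h'=g_h(\phi_h)-g_h(\ell_h^\pm)$, solve by quadrature, and decide the finiteness of $\nu_h^\pm$ and the slope limits \eqref{e:slope0}--\eqref{e:slope1} by a l'H\^opital/Taylor analysis of the integrand $D_h(\rho)/\bigl(g_h(\rho)-g_h(\ell_h^\pm)\bigr)$ near the endpoints, where strict concavity guarantees $g_h'(\ell_h^\pm)\ne0$. The only cosmetic difference is that the paper first normalizes via the change of variable \eqref{eq:change-1} and cites \cite[Theorem~9.1]{GK} for existence and the explicit formula \eqref{eq:psi-1}, whereas you carry out the same quadrature construction directly.
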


We observe that for $c_h$ given by \eqref{eq:ch}, we deduce by (f) that $g_h(\rho) \ge 0$ for all $\rho \in[\ell_h^-,\ell^+_h]$, see \figurename~\ref{fig:gh}. Moreover,  we have
\begin{equation}\label{eq:gh}
g_h(\ell_h^+) = g_h(\ell_h^-) =
-\frac{f_h(\ell_h^+) \, \ell_h^- - f_h(\ell_h^-) \, \ell_h^+}{\ell_h^+ - \ell_h^-}
\end{equation}
and no $\rho\ne\ell_h^\pm$ makes $g_h(\rho)$ equal to that value.

Theorem \ref{t:E} motivates the following definition.

\begin{definition}\label{def:stadeg}
A traveling-wave solution $\rho_h$ is {\em stationary} if $c_h=0$.
It is {\em degenerate} if at least one of conditions (i) or (ii) of Theorem \ref{t:E} holds.
\end{definition}

\begin{remark}\label{r:noC1}
A consequence of assumption {\em (f)} is that if $\rho_h$ is degenerate, then the profile $\phi_h$ is singular either at $\nu_h^-$ in case {(i)} or at $\nu_h^+$ in case {(ii)}, in the sense that $\phi_h'$ cannot be extended to the whole of $\R$ as a continuous function.
\end{remark}

In case {\em (i)} (or {\em (ii)}) of Theorem \ref{t:E} does not hold we define $\nu_h^- \doteq -\infty$ (respectively, $\nu_h^+ \doteq \infty$). In this way the interval $(\nu_h^-,\nu_h^+)$ is always defined and coincides with the interval $I_h$ defined in \eqref{e:Ih}:
\[
I_h = (\nu_h^-,\nu_h^+).
\]
The interval $I_h$ is bounded if and only if both {\em (i)} and {\em (ii)} hold; in this case $\rho_h$ is both degenerate and stationary.  As a consequence, if $\rho_h$ is non-stationary then $I_h$ is unbounded and coincides either with a half line (if $\rho_h$ is degenerate) or with $\R$ (if $\rho_h$ is non-degenerate).
At last, $\rho_h$ is degenerate if and only if either $\nu_h^-$ or $\nu_h^+$ is finite.

In the case of non-stationary traveling-wave solutions $\rho_h$ we use the notation
\begin{equation}\label{e:omegah}
\omega_h \doteq \min\{c_h^{-1}\nu_h^-,c_h^{-1}\nu_h^+\}.
\end{equation}

\begin{lemma}\label{lem:simple}
Let $\rho_h$ be a traveling-wave solution of \eqref{e:one}; then we have the following.
\begin{enumerate}
\item[(a)]
If $\rho_h$ is stationary, then it is degenerate if and only if $D_h(0)D_h(1)=0$ and $\ell_h^- = 0$ (hence $\ell_h^+=1$).
\item[(b)]
If $\rho_h$ is non-stationary, then it is degenerate if and only if one of the following equivalent statements hold:
\begin{itemize}
\item
either $D_h(0) = 0 = \ell_h^-$ or $D_h(1) = 0 = 1-\ell_h^+$, but not both;
\item
$\omega_h$ is finite.
\end{itemize}
In this case the function $\xi \mapsto \phi_h'(c_h\xi)$ is singular at $\xi = \omega_h$ and $\C1$ elsewhere.
\end{enumerate}
\end{lemma}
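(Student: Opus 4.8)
The plan is to reduce the lemma entirely to Theorem~\ref{t:E} and Definition~\ref{def:stadeg}: by definition $\rho_h$ is degenerate exactly when (i) or (ii) of Theorem~\ref{t:E} holds, and from the discussion following that theorem $\nu_h^-$ is finite iff (i) holds while $\nu_h^+$ is finite iff (ii) holds. Before splitting into cases I would record two elementary consequences of assumption (f). First, since $f_h$ is strictly concave with $f_h(0)=f_h(1)=0$ it is strictly positive on $(0,1)$; hence, whenever $c_h=0$ (i.e.\ $f_h(\ell_h^-)=f_h(\ell_h^+)$), the condition $\ell_h^-=0$ forces $f_h(\ell_h^+)=0$ and so $\ell_h^+=1$, and symmetrically $\ell_h^+=1$ forces $\ell_h^-=0$. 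Second, by \eqref{eq:ch}, if $\ell_h^-=0$ then $c_h=f_h(\ell_h^+)/\ell_h^+\ge 0$ with equality only when $\ell_h^+=1$, whereas if $\ell_h^+=1$ then $c_h=-f_h(\ell_h^-)/(1-\ell_h^-)\le 0$ with equality only when $\ell_h^-=0$. Consequently a non-stationary wave satisfying (i) must have $c_h>0$ and a non-stationary wave satisfying (ii) must have $c_h<0$; reconciling these signs with the $\min$ in \eqref{e:omegah} is the one point that genuinely needs care.

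For part (a), assume $c_h=0$. If $\rho_h$ is degenerate then (i) or (ii) holds: (i) gives $\ell_h^-=0$ and $D_h(0)=0$, and the first preliminary fact then gives $\ell_h^+=1$; (ii) gives $\ell_h^+=1$, hence $\ell_h^-=0$, and $D_h(1)=0$. In both situations $\ell_h^-=0$ (so $\ell_h^+=1$) and $D_h(0)D_h(1)=0$. Conversely, if $\ell_h^-=0$ then $\ell_h^+=1$, and if additionally $D_h(0)=0$ then (i) holds, while if $D_h(1)=0$ then (ii) holds; either way $\rho_h$ is degenerate.

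For part (b), assume $c_h\ne 0$. Conditions (i) and (ii) cannot hold simultaneously, since that would give $\ell_h^-=0$, $\ell_h^+=1$ and hence $c_h=0$ by \eqref{eq:ch}; thus degeneracy is equivalent to the first bulleted statement. For the equivalence with ``$\omega_h$ finite'', note that $c_h^{-1}\nu_h^-$ is finite iff (i) holds and $c_h^{-1}\nu_h^+$ is finite iff (ii) holds. If $\rho_h$ is non-degenerate then $\nu_h^-=-\infty$ and $\nu_h^+=+\infty$, so both of $c_h^{-1}\nu_h^-,c_h^{-1}\nu_h^+$ are infinite (one $+\infty$, one $-\infty$, whichever the sign of $c_h$) and $\omega_h=-\infty$. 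If (i) holds but not (ii), then $c_h>0$ and $\nu_h^+=+\infty$, so $c_h^{-1}\nu_h^+=+\infty$ and $\omega_h=c_h^{-1}\nu_h^-$ is finite; symmetrically, if (ii) holds but not (i), then $c_h<0$ and $\nu_h^-=-\infty$, so $c_h^{-1}\nu_h^-=+\infty$ and $\omega_h=c_h^{-1}\nu_h^+$ is finite. Hence $\omega_h$ is finite precisely when $\rho_h$ is degenerate, and in that case $\omega_h$ is the rescaling by $c_h^{-1}$ of the unique point at which $\phi_h'$ is not continuous ($\nu_h^-$ in case (i), $\nu_h^+$ in case (ii)). Finally, Theorem~\ref{t:E} gives $\phi_h\in\C2(I_h)$, hence $\phi_h'\in\C1(I_h)$, while $\phi_h'\equiv 0$ on $(-\infty,\nu_h^-]$ and on $[\nu_h^+,\infty)$; therefore $\phi_h'$ is $\C1$ on $\R$ minus that single point and singular there by Remark~\ref{r:noC1}, and composing with the linear map $\xi\mapsto c_h\xi$ moves the singular point to $\xi=\omega_h$, which is the last assertion.
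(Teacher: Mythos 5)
Your proposal is correct and follows essentially the same route as the paper's own proof: reduce everything to the characterization of degeneracy in Theorem~\ref{t:E}, observe that for $c_h\neq 0$ exactly one of $\nu_h^\pm$ can be finite, determine the sign of $c_h$ from \eqref{eq:ch} to identify which term realizes the minimum in \eqref{e:omegah}, and invoke Remark~\ref{r:noC1} for the regularity statement. Your explicit check that the infinite term $c_h^{-1}\nu_h^\mp$ equals $+\infty$ (so the $\min$ indeed selects the finite one) is a detail the paper leaves implicit, but the argument is the same.
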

\begin{proof}
We recall that $\rho_h$ is degenerate if and only if either $D_h(0) = 0 = \ell_h^-$ or $D_h(1) = 0 = 1-\ell_h^+$. This means that at least one of the end states must be $0$ or $1$, say $0$; but then $c_h=0$ if and only if the other end state is $1$. This proves {\em (a)} and the first part of {\em (b)}.

Now, we prove the second part of \emph{(b)}.
Since $c_h \ne 0$, exactly one between {\em (i)} and {\em (ii)} of Theorem \ref{t:E} occurs, namely, exactly one between $\nu_h^-$ and $\nu_h^+$ is finite.
If $\nu_h^-$ is finite and $\nu_h^+=\infty$, then $c_h=f_h(\ell_h^+)/\ell_h^+>0$ and $\omega_h = c_h^{-1}\nu_h^-$ is finite.
By Remark \ref{r:noC1}, we know that $\xi\mapsto\phi_h'(\xi)$ is singular at $\xi=\nu_h^-$ and $\C1$ elsewhere, whence the regularity of $\xi \mapsto \phi_h'(c_h\xi)$.
Analogously, if $\nu_h^+$ is finite and $\nu_h^-=-\infty$, then $c_h=-f_h(\ell_h^-)/(1-\ell_h^-)<0$ and $\omega_h = c_h^{-1}\nu_h^+$ is finite. The statement about the smoothness of $\xi \mapsto \phi_h'(c_h\xi)$ is proved as above.

Finally, the converse is straightforward. In fact, if $\omega_h$ is finite, then either $\omega_h = c_h^{-1}\nu_h^-$ and $\nu_h^-$ is finite, or $\omega_h = c_h^{-1}\nu_h^+$ and  $\nu_h^+$ is finite; in both cases $\rho_h$ is degenerate.
\end{proof}

Because of the smoothness properties of the profile proved in Theorem \ref{t:E}, we can integrate equation \eqref{e:TheMarsVolta} in $(\xi_-,\xi)\subset I_h$ and we obtain
\[
c_h \phi_h(\xi) - F_h\left(\phi_h(\xi), \phi_h'(\xi)\right)
=
c_h \phi_h(\xi_-) - F_h\left(\phi_h(\xi_-), \phi_h'(\xi_-)\right).
\]
If $\xi_-\downarrow\nu_h^-$ in the previous expression, by applying \eqref{eq:muh} or \eqref{eq:TosinAbasi} we deduce
\begin{align}\label{e:Fhpre}
&F_h\left(\phi_h(\xi), \phi_h'(\xi)\right) = c_h \phi_h(\xi) + g_h(\ell_h^\pm),&
&\xi \in I_h.
\end{align}
We observe that \eqref{e:Fhpre} is trivially satisfied in case \emph{(i)} when  $\xi < \nu_h^-$ and in case \emph{(ii)} when  $\xi > \nu_h^+$; moreover, by a continuity argument, we deduce from \eqref{eq:muh} and \eqref{eq:nuh} that \eqref{e:Fhpre} is satisfied in case \emph{(i)} at $\xi = \nu_h^-$ and in case \emph{(ii)} at $\xi = \nu_h^+$, respectively. In conclusion, we have that \eqref{e:Fhpre} holds in the whole $\R$, namely
\begin{align}
\label{eq:Lorenzo4}
&D_h\left(\phi_h(\xi)\right) \, \phi_h'(\xi) = g_h\left(\phi_h(\xi)\right) - g_h(\ell_h^\pm),&
&\xi\in\R.
\end{align}


\section{Traveling waves in a network}\label{s:TWN}

In this section we consider the traveling-wave solutions of problem \eqref{eq:model}-\eqref{eq:Queen} in the network $\mathcal{N}$.
We first introduce the definition of traveling-wave solution in $\mathcal{N}$.
\begin{definition}\label{d:tw}
For any $h \in \mathsf{H}$, let $\rho_h$ be a traveling-wave solution of $\eqref{eq:model}_h$ in the sense of Definition \ref{def:solTWh} and set $\rho \doteq (\rho_1, \ldots, \rho_{m+n})$.
With reference to  Definition \ref{def:stadeg}, we say that:
\begin{itemize}
\item
$\rho$ is \emph{stationary} if each component $\rho_h$ is stationary;
\item
$\rho$ is {\em completely non-stationary} if none of its components is stationary;
\item
$\rho$ is \emph{degenerate} if at least one component $\rho_h$ is degenerate;
\item
$\rho$ is {\em completely degenerate} if each of its components is degenerate.
\end{itemize}
Finally, we say that $\rho$ is a \emph{traveling-wave solution} of problem \eqref{eq:model}-\eqref{eq:Queen} in the network $\mathcal{N}$ if \eqref{eq:Queen} holds.
\end{definition}

For brevity, from now on we simply write \lq\lq traveling wave\rq\rq\  for \lq\lq traveling-wave solution\rq\rq. In analogy to the notation above, we say that $\phi \doteq (\phi_1, \ldots, \phi_{m+n})$ is a profile for $\rho$ if $\phi_h$ is a profile corresponding to $\rho_h$ for every $h \in \mathsf{H}$.

For clarity of exposition, we collect our general results for stationary and non-stationary traveling waves in the following subsections.

\subsection{General results}

In this subsection, as well as in the following ones, we always assume (f) and (D) without explicitly mentioning it. Moreover, by Definition~\ref{d:tw} and Theorem \ref{t:E}, the end states and the speeds of the profiles must satisfy \eqref{eq:ch} for every $h\in\mathsf{H}$; both conditions in \eqref{eq:ch} are tacitly assumed as well.

\begin{proposition}\label{p:main}
The function $\phi$ is the profile of a traveling wave if and only if $\phi_h$ is a solution to \eqref{e:infty}-\eqref{eq:Lorenzo4} for any $h \in \mathsf{H}$ and
\begin{align}\label{eq:Tool}
&c_j \, \phi_j(c_j t) + g_j(\ell_j^\pm)
=
\sum_{i \in \mathsf{I}} \alpha_{i,j} \left[ c_i \, \phi_i(c_i t) + g_i(\ell_i^\pm) \right],&
&t \in \R, ~ j \in \mathsf{J}.
\end{align}
In \eqref{eq:Tool} any combination of the signs $\pm$ is allowed.\end{proposition}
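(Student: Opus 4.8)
The plan is to translate the coupling condition \eqref{eq:Queen} into a condition on profiles by using the structural identity \eqref{eq:Lorenzo4} established for each single equation. First I would recall that, by Definition~\ref{d:tw}, $\phi$ is the profile of a traveling wave in $\mathcal N$ precisely when each $\phi_h$ is a profile of a traveling wave of $\eqref{eq:model}_h$ \emph{and} the junction condition \eqref{eq:Queen} holds for a.e.\ $t\in\R$. By Theorem~\ref{t:E}, the first requirement is equivalent to $\phi_h$ solving \eqref{e:infty}--\eqref{eq:Lorenzo4} for every $h\in\mathsf H$ (with the end states and speeds constrained by \eqref{eq:ch}, which we assume throughout). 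So the whole content of the proposition is that, \emph{given} that each $\phi_h$ solves \eqref{e:infty}--\eqref{eq:Lorenzo4}, condition \eqref{eq:Queen} is equivalent to \eqref{eq:Tool}.

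The key computational step is to evaluate the parabolic flux $F_h$ along a profile. Writing $\rho_h(t,x)=\phi_h(x-c_h t)$, we have $\rho_h(t,0^\pm)=\phi_h(-c_h t)$ and $\rho_{h,x}(t,0^\pm)=\phi_h'(-c_h t)$ at the relevant one-sided traces; hence
\[
F_h\bigl(\rho_h(t,0^\pm),\rho_{h,x}(t,0^\pm)\bigr)
= f_h\bigl(\phi_h(-c_h t)\bigr) - D_h\bigl(\phi_h(-c_h t)\bigr)\,\phi_h'(-c_h t).
\]
Now I invoke \eqref{eq:Lorenzo4}, which holds on all of $\R$, to substitute $D_h(\phi_h)\phi_h' = g_h(\phi_h) - g_h(\ell_h^\pm)$; recalling $g_h(\rho)=f_h(\rho)-c_h\rho$ from \eqref{e:g}, the flux collapses to
\[
F_h\bigl(\rho_h(t,0^\pm),\rho_{h,x}(t,0^\pm)\bigr)
= f_h(\phi_h(-c_h t)) - \bigl[g_h(\phi_h(-c_h t)) - g_h(\ell_h^\pm)\bigr]
= c_h\,\phi_h(-c_h t) + g_h(\ell_h^\pm).
\]
Here one should note that the symbol $\ell_h^\pm$ in \eqref{eq:Lorenzo4} denotes the common value $g_h(\ell_h^-)=g_h(\ell_h^+)$ by \eqref{eq:gh}, so either sign may be used; this is exactly the clause ``any combination of the signs $\pm$ is allowed'' in the statement. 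Relabelling the time variable (replacing $-c_h t$ by $c_h t$, which is harmless since $t$ ranges over all of $\R$ and we may reflect) turns the left-hand side of \eqref{eq:Queen} into $c_j\phi_j(c_j t)+g_j(\ell_j^\pm)$ and each term on the right into $\alpha_{i,j}\bigl[c_i\phi_i(c_i t)+g_i(\ell_i^\pm)\bigr]$, which is precisely \eqref{eq:Tool}.

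For the converse direction one runs the same identity backwards: if each $\phi_h$ solves \eqref{e:infty}--\eqref{eq:Lorenzo4} and \eqref{eq:Tool} holds, then each $\rho_h$ is a traveling wave of $\eqref{eq:model}_h$ by Theorem~\ref{t:E}, and the flux computation above shows \eqref{eq:Tool} is literally \eqref{eq:Queen} after the change of variable, so $\rho$ is a traveling wave in $\mathcal N$ by Definition~\ref{d:tw}. I do not anticipate a serious obstacle here; the only points requiring care are (i) making sure \eqref{eq:Lorenzo4} is valid at the trace point $\xi=-c_h t$ for \emph{all} $t$, including the possibly singular points $\nu_h^\pm$ — but this was already settled in the discussion preceding \eqref{eq:Lorenzo4}, where it is shown that \eqref{e:Fhpre}, hence \eqref{eq:Lorenzo4}, holds on all of $\R$ by a continuity argument using \eqref{eq:muh} and \eqref{eq:nuh}; and (ii) being explicit that the ``a.e.\ $t$'' in \eqref{eq:Queen} can be upgraded to ``all $t$'' in \eqref{eq:Tool} because $\phi_h$ is continuous, so both sides of \eqref{eq:Tool} are continuous in $t$ and an a.e.\ identity between continuous functions is an identity everywhere. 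With these two remarks in place the proof is a direct substitution.
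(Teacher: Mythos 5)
Your proposal is correct and follows essentially the same route as the paper: plug the ansatz $\rho_h(t,x)=\phi_h(x-c_ht)$ into \eqref{eq:Queen}, use the continuity of the profiles from Theorem~\ref{t:E} and identity \eqref{eq:Lorenzo4} to collapse the parabolic flux to $c_h\phi_h(\cdot)+g_h(\ell_h^\pm)$, and invoke \eqref{eq:gh} for the freedom in the choice of signs. Your additional remarks on the reflection $t\mapsto -t$ and on upgrading the a.e.\ identity to an everywhere identity by continuity are points the paper leaves implicit, and they are handled correctly.
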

\begin{proof}
By plugging $\rho_h(t,x)=\phi_h(x-c_ht)$ in \eqref{eq:Queen} and recalling that by Theorem~\ref{t:E} the profiles are continuous in $\R$, we obtain
\begin{align*}
&F_j\bigl(\phi_j(- c_j t), \phi_j'(- c_j t)\bigr)
=
\sum_{i \in \mathsf{I}} \alpha_{i,j} \, F_i\bigl(\phi_i(- c_i t), \phi_i'(- c_i t)\bigr),&
&t\in\R,\ j\in\mathsf{J},
\end{align*}
which is equivalent to \eqref{eq:Tool} by \eqref{eq:Lorenzo4}.
At last, we can clearly choose any combination of signs in \eqref{eq:Tool} because of \eqref{eq:gh}.
\end{proof}

Differently from what specified in Proposition \ref{p:main}, in the following the choice of the signs ``$\pm$'' follows the usual rules, i.e., top with top and bottom with bottom.

\begin{lemma}\label{l:minmax}
Assume that problem \eqref{eq:model}-\eqref{eq:Queen} admits a traveling wave. Then for any $j\in\mathsf{J}$ we have
\begin{align}\label{eq:maxfj}
\max\left\{f_j(\ell_j^-), f_j(\ell_j^+)\right \} &= \sum_{i \in \mathsf{I}} \alpha_{i,j} \, \max\left\{f_i(\ell_i^-), f_i(\ell_i^+)\right \},
\\\label{eq:minfj}
\min\left\{f_j(\ell_j^-), f_j(\ell_j^+)\right \} &= \sum_{i \in \mathsf{I}} \alpha_{i,j} \, \min\left\{f_i(\ell_i^-), f_i(\ell_i^+)\right \}.
\end{align}
\end{lemma}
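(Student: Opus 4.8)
The plan is to exploit the functional identity \eqref{eq:Tool} together with the monotonicity of the profiles established in Theorem \ref{t:E}. First I would observe that by Proposition \ref{p:main} a traveling wave satisfies
\[
c_j \, \phi_j(c_j t) + g_j(\ell_j^\pm) = \sum_{i \in \mathsf{I}} \alpha_{i,j} \left[ c_i \, \phi_i(c_i t) + g_i(\ell_i^\pm) \right], \qquad t \in \R, \ j \in \mathsf{J},
\]
and that, since each $\phi_h$ is non-decreasing (strictly increasing on $I_h$, constant outside), the map $t \mapsto c_h \phi_h(c_h t)$ is monotone: non-increasing if $c_h > 0$ (because $\phi_h$ increases but the argument $c_h t$ increases while the prefactor $c_h$ is positive — wait, one must be careful) — more precisely $c_h \phi_h(c_h t)$ has derivative $c_h^2 \phi_h'(c_h t) \ge 0$, so it is always \emph{non-decreasing} in $t$, with limits $c_h \ell_h^-$ as $t \to -\infty$ and $c_h \ell_h^+$ as $t \to +\infty$ (and these limits are $0$ if $c_h = 0$). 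Hence each side of \eqref{eq:Tool} is a bounded monotone function of $t$, and I would take the limits $t \to \pm\infty$.

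Next I would compute those limits explicitly. As $t \to -\infty$, the left-hand side of \eqref{eq:Tool} tends to $c_j \ell_j^- + g_j(\ell_j^\pm) = c_j \ell_j^- + f_j(\ell_j^-) - c_j \ell_j^- = f_j(\ell_j^-)$ using \eqref{e:g} and \eqref{eq:gh} (recall $g_j(\ell_j^-) = g_j(\ell_j^+) = f_j(\ell_j^-) - c_j \ell_j^-$). Similarly as $t \to +\infty$ the left-hand side tends to $f_j(\ell_j^+)$. The same computation on each summand of the right-hand side gives $f_i(\ell_i^-)$ and $f_i(\ell_i^+)$ respectively. So passing to the limit in \eqref{eq:Tool} yields
\[
f_j(\ell_j^-) = \sum_{i \in \mathsf{I}} \alpha_{i,j} f_i(\ell_i^-), \qquad f_j(\ell_j^+) = \sum_{i \in \mathsf{I}} \alpha_{i,j} f_i(\ell_i^+), \qquad j \in \mathsf{J}.
\]
Now I would reconcile these with the $\max$/$\min$ form of the claim. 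For each road $h$, by strict concavity of $f_h$ and $f_h(0) = f_h(1) = 0$, the function $f_h$ is nonnegative on $[0,1]$; combined with $\ell_h^- < \ell_h^+$ and \eqref{eq:ch}, the speed $c_h$ is the slope of the chord, and one checks that whether $f_h(\ell_h^-) \le f_h(\ell_h^+)$ or the reverse is governed by the sign of $c_h$ relative to the geometry — but in any case the \emph{pair} $\{f_h(\ell_h^-), f_h(\ell_h^+)\}$ is exactly $\{\min, \max\}$ of the two chord endpoints. The subtlety is that the correspondence "$-$ with $-$, $+$ with $+$" in the two displayed identities need not match the "$\min$ with $\min$, $\max$ with $\max$" pairing uniformly across $i$ and $j$. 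To handle this I would note that the two displayed equations together are equivalent to the single statement that the unordered pair $\{\,\sum_i \alpha_{i,j} f_i(\ell_i^-),\ \sum_i \alpha_{i,j} f_i(\ell_i^+)\,\}$ equals $\{f_j(\ell_j^-), f_j(\ell_j^+)\}$; and since $\sum_i \alpha_{i,j}(\cdot)$ is a convex-combination-type (monotone, with $\sum_j \alpha_{i,j} = 1$ but here $i$ fixed isn't used) operation — actually I would argue directly: adding the two displayed identities gives the sum $f_j(\ell_j^-) + f_j(\ell_j^+) = \sum_i \alpha_{i,j}(f_i(\ell_i^-) + f_i(\ell_i^+))$, which is exactly $\max + \min$ on each side, and the difference gives $|f_j(\ell_j^+) - f_j(\ell_j^-)|$-type information after taking absolute values — cleaner is: since for every $h$ we have $\{f_h(\ell_h^-), f_h(\ell_h^+)\} = \{\min_h, \max_h\}$ as a set, the two displayed linear identities say precisely $\max_j = \sum_i \alpha_{i,j}\max_i$ OR a cross-pairing; I would rule out the cross-pairing by observing that, with $m_h \doteq \min\{f_h(\ell_h^-),f_h(\ell_h^+)\} \le M_h \doteq \max\{\cdots\}$, the identities force $\sum_i \alpha_{i,j} m_i \le \sum_i \alpha_{i,j} M_i$, and the left/right limits $f_j(\pm\infty$-value$)$ were obtained in a fixed order, so matching the ordered limits $\big(f_j(\ell_j^-)\to$ at $-\infty$, $f_j(\ell_j^+)\to$ at $+\infty\big)$ against the analogously ordered limits on the right gives the result once one sorts each pair.

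The main obstacle is precisely this bookkeeping of signs: making sure the left/$t\to-\infty$ limit is consistently identified with one of $\min,\max$ and the right/$t\to+\infty$ with the other, uniformly across all edges, rather than edge-by-edge. I expect this to be resolvable cleanly by working throughout with the ordered limits $L_h^\mp \doteq \lim_{t\to\mp\infty} [c_h\phi_h(c_h t) + g_h(\ell_h^\pm)] = f_h(\ell_h^\mp)$, proving $L_j^- = \sum_i \alpha_{i,j} L_i^-$ and $L_j^+ = \sum_i \alpha_{i,j} L_i^+$, and only at the very end using that $\{L_h^-, L_h^+\} = \{m_h, M_h\}$ together with the fact that the two sums $\sum_i \alpha_{i,j} L_i^\pm$ cannot "cross" — because $t \mapsto \sum_i \alpha_{i,j}[c_i\phi_i(c_it)+g_i(\ell_i^\pm)]$ is monotone non-decreasing in $t$ (sum of non-decreasing functions, by the derivative computation above), so its limit at $-\infty$ is $\le$ its limit at $+\infty$, i.e. $\sum_i\alpha_{i,j}L_i^- \le \sum_i\alpha_{i,j}L_i^+$, forcing $\sum_i\alpha_{i,j}L_i^- = \min$-side $= \sum_i \alpha_{i,j} m_i$ and likewise for the max. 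This gives \eqref{eq:maxfj}--\eqref{eq:minfj}.
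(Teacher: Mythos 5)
Your overall strategy --- pass to the limits $t\to\pm\infty$ in \eqref{eq:Tool} and use the monotonicity of $t\mapsto c_h\phi_h(c_ht)+g_h(\ell_h^\pm)$ --- is exactly the paper's, but the middle of your argument contains a concrete sign error that makes your displayed intermediate identities false. You claim $\lim_{t\to-\infty}c_h\phi_h(c_ht)=c_h\ell_h^-$; this holds only when $c_h\ge0$. When $c_h<0$ the argument $c_ht$ tends to $+\infty$ as $t\to-\infty$, so the limit is $c_h\ell_h^+$, and accordingly $\lim_{t\to-\infty}\left[c_h\phi_h(c_ht)+g_h(\ell_h^\pm)\right]=f_h(\ell_h^+)$, not $f_h(\ell_h^-)$. (Your own derivative computation already rules out your claimed limits: a non-decreasing function cannot tend to $c_h\ell_h^-$ at $-\infty$ and to $c_h\ell_h^+$ at $+\infty$ when $c_h<0$, since then $c_h\ell_h^->c_h\ell_h^+$.) Consequently the identities $f_j(\ell_j^\pm)=\sum_{i}\alpha_{i,j}f_i(\ell_i^\pm)$ that you display are false whenever some $c_i$ and $c_j$ have opposite signs --- which the lemma must allow, since only under the continuity condition \eqref{e:dens_cont} do all speeds share a sign. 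The correct matched-sign statement is \eqref{eq:fjpm}, which involves $L_{i,j}^\pm$ from \eqref{e:Lij} precisely to swap the end states of those $i$'s; it is derived in the paper as a \emph{consequence} of this lemma, not a step toward it.

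The repair is what the paper does and what your last paragraph gestures at, minus the erroneous formula $L_h^\mp=f_h(\ell_h^\mp)$: each $\Upsilon_h(t)\doteq c_h\phi_h(c_ht)+g_h(\ell_h^-)=f_h(\ell_h^-)+c_h\left[\phi_h(c_ht)-\ell_h^-\right]$ is individually non-decreasing and ranges between $f_h(\ell_h^-)$ and $f_h(\ell_h^+)$, so its limits at $\mp\infty$ are directly $\min\{f_h(\ell_h^-),f_h(\ell_h^+)\}$ and $\max\{f_h(\ell_h^-),f_h(\ell_h^+)\}$; passing to the limit in \eqref{eq:Tool} then yields \eqref{eq:minfj} and \eqref{eq:maxfj} with no cross-pairing left to rule out. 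Be careful also with your final ``cannot cross'' step: as written it invokes only the monotonicity of the sum $\sum_i\alpha_{i,j}\Upsilon_i$, which does not force $\sum_i\alpha_{i,j}L_i^-=\sum_i\alpha_{i,j}m_i$ (one summand could swap $m_i$ and $M_i$ and be compensated by another); you need the monotonicity of each individual $\Upsilon_i$, which you did establish via the derivative $c_i^2\phi_i'(c_it)\ge0$ and which identifies $L_i^-=m_i$ and $L_i^+=M_i$ term by term.
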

\begin{proof}
Fix $j\in\mathsf{J}$.
We notice that \eqref{eq:Tool} is equivalent to
\begin{align*}
&\Upsilon_j(t) =
\sum_{i \in \mathsf{I}} \alpha_{i,j} \Upsilon_i(t),&t \in \R, ~ j \in \mathsf{J},
\end{align*}
where the map $t \mapsto \Upsilon_h(t) \doteq c_h \, \phi_h(c_ht) + g_h(\ell_h^-)$ is non-decreasing because the profiles are so, by Theorem \ref{t:E}. Since we can write $\Upsilon_h(t) = f_h(\ell_h^-) + c_h [\phi_h(c_ht) - \ell_h^-]$, we see that $\Upsilon_h$ ranges between $f_h(\ell_h^-)$ and $f_h(\ell_h^+)$ because of \eqref{eq:ch} and the fact that $\xi\mapsto\phi_h(\xi)$ takes values in $[\ell_h^-,\ell_h^+]$.  As a consequence,
\begin{align*}
&\lim_{t\to\infty}\Upsilon_h(t) = \max\left\{f_h(\ell_h^-),f_h(\ell_h^+)
\right\},&
&\lim_{t\to-\infty}\Upsilon_h(t) = \min\left\{f_h(\ell_h^-),f_h(\ell_h^+)
\right\}.
\end{align*}
Hence, by passing to the limit for $t\to\pm\infty$ in \eqref{eq:Tool} we obtain \eqref{eq:maxfj} and \eqref{eq:minfj}, respectively.
\end{proof}

\begin{lemma}\label{l:cj=0}
Assume that problem \eqref{eq:model}-\eqref{eq:Queen} admits a traveling wave. The traveling wave is stationary if and only if one of the following equivalent statements hold:
\begin{enumerate}
\item[(i)]
there exists ${\rm j} \in \mathsf{J}$ such that $c_{\rm j} = 0$;
\item[(ii)]
$c_i = 0$ for all $i \in \mathsf{I}$;
\item[(iii)]
$c_j = 0$ for all $j \in \mathsf{J}$.
\end{enumerate}
\end{lemma}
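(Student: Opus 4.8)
The plan is to establish the chain of implications (i) $\Rightarrow$ (ii) $\Rightarrow$ (iii) $\Rightarrow$ (i), together with the equivalence to stationarity, using the already-available identities \eqref{eq:maxfj} and \eqref{eq:minfj} from Lemma \ref{l:minmax} and the formula \eqref{eq:ch} for the speeds. The key observation is that for each $h\in\mathsf{H}$ the speed $c_h$ given by \eqref{eq:ch} equals the slope of the chord of $f_h$ between $\ell_h^-$ and $\ell_h^+$; since $f_h$ is strictly concave with $f_h(0)=f_h(1)=0$, this slope is zero precisely when $f_h(\ell_h^-)=f_h(\ell_h^+)$, and in that case, by strict concavity, we must have $\ell_h^-=0$ and $\ell_h^+=1$ (the only two points where $f_h$ attains its minimum value $0$ on $[0,1]$). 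So $c_h=0$ is equivalent to $\ell_h^-=0,\ \ell_h^+=1$, which in turn is equivalent to $f_h(\ell_h^-)=f_h(\ell_h^+)=0$, i.e.\ to $\max\{f_h(\ell_h^-),f_h(\ell_h^+)\}=0$. This reduces everything to a statement about the nonnegative quantities $M_h\doteq\max\{f_h(\ell_h^-),f_h(\ell_h^+)\}\ge0$.

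First I would prove (i) $\Rightarrow$ (ii). Suppose $c_{\rm j}=0$ for some ${\rm j}\in\mathsf{J}$. By the observation above, $M_{\rm j}=0$. Applying \eqref{eq:maxfj} with $j={\rm j}$ gives $0=\sum_{i\in\mathsf{I}}\alpha_{i,{\rm j}}M_i$; since every $\alpha_{i,{\rm j}}\in(0,1]$ is strictly positive and every $M_i\ge0$, this forces $M_i=0$ for all $i\in\mathsf{I}$, hence $c_i=0$ for all $i\in\mathsf{I}$. Next, (ii) $\Rightarrow$ (iii): if $c_i=0$ for all $i\in\mathsf{I}$, then $M_i=0$ for all $i$, so \eqref{eq:maxfj} yields $M_j=\sum_{i\in\mathsf{I}}\alpha_{i,j}M_i=0$ for every $j\in\mathsf{J}$, whence $c_j=0$ for all $j\in\mathsf{J}$. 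The implication (iii) $\Rightarrow$ (i) is trivial since $\mathsf{J}\ne\emptyset$. This closes the cycle, so (i), (ii), (iii) are equivalent.

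Finally, for the equivalence with stationarity: by Definition \ref{d:tw}, $\rho$ is stationary iff every component is stationary, i.e.\ iff $c_h=0$ for all $h\in\mathsf{H}$. If $\rho$ is stationary, then in particular $c_{\rm j}=0$ for any ${\rm j}\in\mathsf{J}$, giving (i). Conversely, if (i)–(iii) hold, then $c_i=0$ for all $i\in\mathsf{I}$ and $c_j=0$ for all $j\in\mathsf{J}$, i.e.\ $c_h=0$ for all $h\in\mathsf{H}=\mathsf{I}\cup\mathsf{J}$, so $\rho$ is stationary. I expect the only real content — and thus the one point to state carefully — is the reduction $c_h=0 \Leftrightarrow f_h(\ell_h^-)=f_h(\ell_h^+) \Leftrightarrow M_h=0$, which rests on the strict concavity in (f); everything else is a short propagation argument through \eqref{eq:maxfj} using the positivity of the $\alpha_{i,j}$. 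No serious obstacle is anticipated.
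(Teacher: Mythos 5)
Your reduction ``$c_h=0\iff\max\{f_h(\ell_h^-),f_h(\ell_h^+)\}=0$'' is false, and the whole argument rests on it. From $c_h=0$, i.e.\ $f_h(\ell_h^-)=f_h(\ell_h^+)$, strict concavity does \emph{not} force $\ell_h^-=0$ and $\ell_h^+=1$: for any level $y$ strictly between $0$ and $\max_{[0,1]}f_h$ there are exactly two points $\rho_1<\rho_2$ with $f_h(\rho_1)=f_h(\rho_2)=y$, and these are the end states of a perfectly good stationary profile with $M_h=y>0$. (Concretely, for $f_h(\rho)=\rho(1-\rho)$ take $\ell_h^-=0.3$, $\ell_h^+=0.7$: then $c_h=0$ but $M_h=0.21$.) Indeed the paper's Theorem \ref{t:1} and the remark following it explicitly construct infinitely many stationary waves with $0\ne\ell_h^-<\ell_h^+\ne1$. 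With this step gone, your implication (i)~$\Rightarrow$~(ii) collapses: knowing $c_{\rm j}=0$ only gives $M_{\rm j}=\sum_i\alpha_{i,{\rm j}}M_i$ with $M_{\rm j}$ possibly positive, from which nothing about the $c_i$ follows.

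The repair is small and is exactly what the paper does: the quantity that vanishes if and only if $c_h=0$ is not $M_h$ but the difference $M_h-m_h=|f_h(\ell_h^+)-f_h(\ell_h^-)|$. Subtracting \eqref{eq:minfj} from \eqref{eq:maxfj} gives
\begin{equation*}
|f_j(\ell_j^+)-f_j(\ell_j^-)|=\sum_{i\in\mathsf{I}}\alpha_{i,j}\,|f_i(\ell_i^+)-f_i(\ell_i^-)|,
\end{equation*}
and now your propagation argument (positivity of the $\alpha_{i,j}$, nonnegativity of each summand) applies verbatim to these differences and yields the equivalence of (i), (ii), (iii). The closing paragraph of your proposal, relating (i)--(iii) to stationarity via Definition \ref{d:tw}, is fine as it stands.
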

\begin{proof}
By subtracting \eqref{eq:minfj} to \eqref{eq:maxfj} we obtain
\[
|f_j(\ell_j^+) - f_j(\ell_j^-)| = \sum_{i \in \mathsf{I}} \alpha_{i,j} |f_i(\ell_i^+) - f_i(\ell_i^-)|.
\]
Since $c_h=0$ if and only if $f_h(\ell_h^-) = f_h(\ell_h^+)$, from the above equation we immediately deduce that $(i)$, $(ii)$ and $(iii)$ are equivalent. By the equivalence of {\em(ii)} and {\em(iii)}, a traveling wave is stationary if and only if one of the statements above holds.
\end{proof}

Lemma \ref{l:cj=0} shows that either a traveling wave is stationary, and then $c_h=0$ for every $h\in\mathsf{H}$, or it is non-stationary, and then
\begin{align}\label{e:cicj}
\hbox{there exists ${\rm i}\in\mathsf{I}$ such that $c_{\rm i}\ne0$ and $c_j\ne0$ for every $j\in\mathsf{J}$.}
\end{align}
Of course, by Lemma \ref{l:cj=0}, $c_{\rm i}\ne0$ for some ${\rm i}\in\mathsf{I}$ if and only if $c_j\ne0$ for every $j\in\mathsf{J}$.

\begin{proposition}\label{rem:DTP}
Fix $\ell_i^\pm \in [0,1]$ with $\ell_i^- < \ell_i^+$, $i \in \mathsf{I}$.
Then for any $j\in\mathsf{J}$ there exist $\ell_j^\pm \in [0,1]$ with $\ell_j^- < \ell_j^+$ and satisfying \eqref{eq:maxfj}-\eqref{eq:minfj} if and only if
\begin{equation}\label{e:ineq}
\begin{cases}
\displaystyle\max_{[0,1]}f_j > \sum_{i \in \mathsf{I}} \alpha_{i,j} \, \max\left\{f_i(\ell_i^-), f_i(\ell_i^+)\right\}&\hbox{if }c_1=\ldots=c_m=0,
\\
\displaystyle\max_{[0,1]}f_j \ge \sum_{i \in \mathsf{I}} \alpha_{i,j} \, \max\left\{f_i(\ell_i^-), f_i(\ell_i^+)\right\}&\hbox{otherwise}.
\end{cases}
\end{equation}
In this case, the end states $\ell_j^\pm$ are uniquely determined if and only if $c_i=0$ for every $i \in \mathsf{I}$.
\end{proposition}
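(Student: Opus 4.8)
The plan is to reduce \eqref{eq:maxfj}--\eqref{eq:minfj} to a statement about the level sets of the single flux $f_j$. Fix $j\in\mathsf{J}$ and set
\[
A_j\doteq\sum_{i\in\mathsf{I}}\alpha_{i,j}\max\{f_i(\ell_i^-),f_i(\ell_i^+)\},
\qquad
B_j\doteq\sum_{i\in\mathsf{I}}\alpha_{i,j}\min\{f_i(\ell_i^-),f_i(\ell_i^+)\}.
\]
First I would record that, since $\alpha_{i,j}>0$ and $f_i\ge0$, one has $0\le B_j\le A_j$, and that $B_j=A_j$ holds precisely when $f_i(\ell_i^-)=f_i(\ell_i^+)$ for every $i\in\mathsf{I}$, i.e.\ (by \eqref{eq:ch} and $\ell_i^-<\ell_i^+$) precisely when $c_1=\dots=c_m=0$. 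Then \eqref{eq:maxfj}--\eqref{eq:minfj} amounts to finding $\ell_j^-<\ell_j^+$ in $[0,1]$ with $\max\{f_j(\ell_j^-),f_j(\ell_j^+)\}=A_j$ and $\min\{f_j(\ell_j^-),f_j(\ell_j^+)\}=B_j$.

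Next I would fix the shape of $f_j$: by (f) it is strictly concave on $[0,1]$ with $f_j(0)=f_j(1)=0$, hence it attains its maximum $\mathcal M_j\doteq\max_{[0,1]}f_j>0$ at a unique $\theta_j\in(0,1)$ and is strictly increasing on $[0,\theta_j]$ and strictly decreasing on $[\theta_j,1]$. Therefore, for every $v\in[0,\mathcal M_j]$ the level set $\{x\in[0,1]:f_j(x)=v\}$ equals $\{\lambda_j(v),\mu_j(v)\}$ with $\lambda_j(v)\in[0,\theta_j]$ and $\mu_j(v)\in[\theta_j,1]$; moreover $\lambda_j$ is strictly increasing, $\mu_j$ is strictly decreasing, $\lambda_j(0)=0$, $\mu_j(0)=1$, and $\lambda_j(v)=\mu_j(v)$ (both equal to $\theta_j$) if and only if $v=\mathcal M_j$, while for $v>\mathcal M_j$ the level set is empty.

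With this in hand the equivalence is short. For \emph{sufficiency}: if $c_1=\dots=c_m=0$ then $A_j=B_j$ and \eqref{e:ineq} reads $\mathcal M_j>A_j$, so $\ell_j^-\doteq\lambda_j(A_j)<\mu_j(A_j)\doteq\ell_j^+$ works; if instead some $c_i\ne0$ then $A_j>B_j\ge0$ and \eqref{e:ineq} reads $\mathcal M_j\ge A_j$, so, since $\lambda_j$ is strictly increasing and $B_j<A_j\le\mathcal M_j$, the pair $\ell_j^-\doteq\lambda_j(B_j)<\lambda_j(A_j)\doteq\ell_j^+$ works. For \emph{necessity}: any admissible pair satisfies $A_j=\max\{f_j(\ell_j^-),f_j(\ell_j^+)\}\le\mathcal M_j$, which is the inequality in the ``otherwise'' case; and if in addition $A_j=B_j$, an admissible pair forces $f_j(\ell_j^-)=f_j(\ell_j^+)=A_j$ with $\ell_j^-<\ell_j^+$, which is impossible when $A_j=\mathcal M_j$ because the only preimage of $\mathcal M_j$ is $\theta_j$; hence $\mathcal M_j>A_j$ is necessary in that case.

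For the \emph{uniqueness} claim I assume \eqref{e:ineq} (so that a pair exists). If $c_1=\dots=c_m=0$, hence $A_j=B_j<\mathcal M_j$, then any admissible pair has $\{\ell_j^-,\ell_j^+\}\subseteq\{f_j=A_j\}=\{\lambda_j(A_j),\mu_j(A_j)\}$, a set with exactly two elements, so necessarily $\ell_j^-=\lambda_j(A_j)$ and $\ell_j^+=\mu_j(A_j)$: the pair is unique. If some $c_i\ne0$, hence $A_j>B_j$ and $\mathcal M_j\ge A_j$, then $(\lambda_j(B_j),\lambda_j(A_j))$ and $(\mu_j(A_j),\mu_j(B_j))$ are both admissible by the monotonicity of $\lambda_j$ and $\mu_j$, and they are distinct, since $\lambda_j(B_j)=\mu_j(A_j)$ would force $\lambda_j(B_j)=\theta_j=\mu_j(A_j)$, i.e.\ $A_j=B_j=\mathcal M_j$, a contradiction; so the pair is not unique. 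The only point requiring care is the bookkeeping that routes the strict inequality of \eqref{e:ineq} to the branch $A_j=B_j$ and the non-strict one to the branch $A_j>B_j$; beyond that I foresee no real obstacle.
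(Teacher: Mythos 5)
Your proof is correct and follows essentially the same route as the paper's: both reduce \eqref{eq:maxfj}--\eqref{eq:minfj} to solving level-set equations for the strictly concave, unimodal flux $f_j$, with the strict inequality in \eqref{e:ineq} needed exactly when the two levels coincide (i.e.\ when all $c_i=0$) and non-uniqueness in the other case coming from the two monotone branches of $f_j$ (the paper counts four candidate pairs via its Figure~\ref{fig:4cases}; exhibiting two distinct ones, as you do, suffices). The only difference is presentational: you make explicit, via the branch inverses $\lambda_j,\mu_j$, what the paper delegates to strict concavity, Lemma~\ref{l:cj=0} and a picture.
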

\begin{proof}
Assume that there exist $\ell_j^\pm \in [0,1]$, with $\ell_j^- < \ell_j^+$, which satisfy \eqref{eq:maxfj}-\eqref{eq:minfj}. Then clearly we have $\max_{[0,1]}f_j \ge \sum_{i \in \mathsf{I}} \alpha_{i,j} \, \max\{f_i(\ell_i^-), f_i(\ell_i^+)\}$. If $c_i=0$ for every $i\in\mathsf{I}$, then we have $c_j=0$ for every $j\in\mathsf{J}$ by Lemma \ref{l:cj=0}; the equality $\max_{[0,1]}f_j= f(\ell_j^-)=f(\ell_j^+)$ would imply $\ell_j^-=\ell_j^+$ because of (f), a contradiction, and then $\max_{[0,1]}f_j> f(\ell_j^-)=f(\ell_j^+)$. This proves \eqref{e:ineq}.

\begin{figure}[htbp]\centering
\begin{psfrags}
      \psfrag{f}[c,B]{$f_j$}
      \psfrag{r}[l,B]{$\rho$}
      \psfrag{1}[l,B]{$1$}
      \psfrag{2}[c,B]{$\ell_j^-$}
      \psfrag{3}[c,B]{$\ell_j^+$}
      \psfrag{b}[c,b]{$\max\{f_j(\ell_j^-), f_j(\ell_j^+)\}$}
      \psfrag{a}[c,b]{$\min\{f_j(\ell_j^-), f_j(\ell_j^+)\}$}
\includegraphics[width=.9\textwidth]{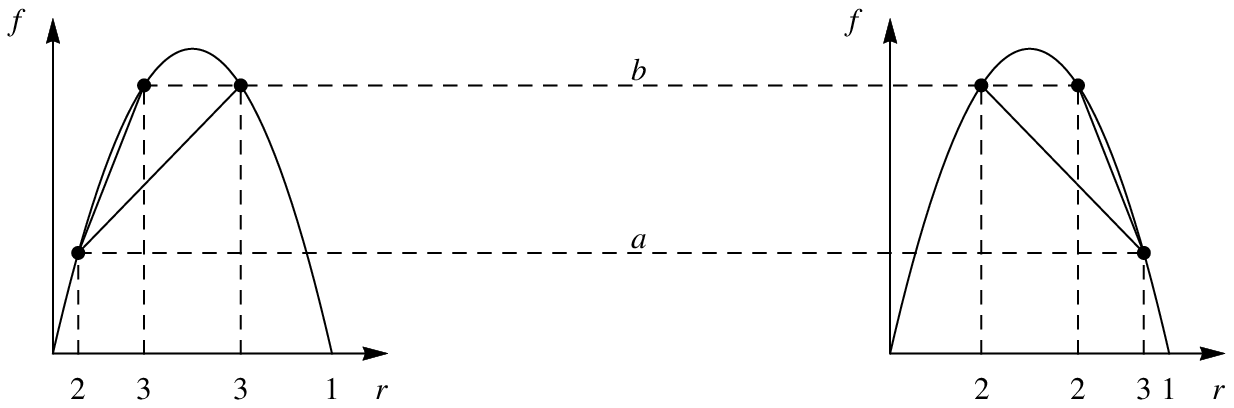}
\end{psfrags}
\caption{\label{fig:4cases}
{The values $\max\{f_j(\ell_j^-), f_j(\ell_j^+)\}$ and $\min\{f_j(\ell_j^-), f_j(\ell_j^+)\}$ equal the right-hand side of \eqref{eq:maxfj} and \eqref{eq:minfj}, respectively; the lines have slope $c_j\ne0$. Left: $c_j>0$. Right: $c_j<0$.}}
\end{figure}

Conversely, assume \eqref{e:ineq}. If $c_i=0$ for every $i\in\mathsf{I}$, then $\ell_j^-<\ell_j^+$ are uniquely determined because of the strict concavity of $f_j$. Assume, on the contrary, that $c_{\rm i}\ne0$ for some ${\rm i} \in \mathsf{I}$; then $c_j\ne0$ by Lemma \ref{l:cj=0}, i.e., $f_j(\ell_j^-)\ne f_j(\ell_j^+)$. Thus \eqref{eq:maxfj}-\eqref{eq:minfj} determine exactly four possible choices of end states $\ell_j^\pm$ with $\ell_j^- < \ell_j^+$, see Figure \ref{fig:4cases}.
\end{proof}

By Proposition \ref{rem:DTP} and Lemma \ref{l:cj=0} we deduce that the end states $\ell_j^\pm$ are uniquely determined in terms of the end states $\ell_i^\pm$ if and only if the traveling wave is stationary and the first condition in \eqref{e:ineq} holds.

We now give an algebraic result about determining the end states of the outgoing profiles in terms of the end states of the ingoing ones. We introduce
\begin{equation}\label{e:Lij}
L_{i,j}^\pm \doteq \begin{cases}
\ell_i^\pm & \hbox{ if } c_i \, c_j \ge 0,
\\
\ell_i^\mp & \hbox{ if } c_i \, c_j <0.
\end{cases}
\end{equation}

\begin{proposition}\label{p:Tool}
Assume that problem \eqref{eq:model}-\eqref{eq:Queen} admits a traveling wave. Then for any $j\in\mathsf{J}$ we have
\begin{align}\label{eq:fjpm}
f_j(\ell_j^\pm) &=
\sum_{i \in \mathsf{I}} \alpha_{i,j} \, f_i(L_{i,j}^\pm).
\end{align}
Moreover, \eqref{eq:fjpm} is equivalent to \eqref{eq:maxfj}-\eqref{eq:minfj}.
\end{proposition}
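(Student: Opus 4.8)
The plan is to derive \eqref{eq:fjpm} directly from the limiting identities \eqref{eq:maxfj}--\eqref{eq:minfj} by carefully bookkeeping which endpoint realizes the max and which the min, depending on the sign of each $c_h$, and then to argue the reverse implication by the same dictionary. The key observation is that, for each road $h$, the strict concavity of $f_h$ together with \eqref{eq:ch} forces a monotone relation between the ordering of $f_h(\ell_h^-)$ and $f_h(\ell_h^+)$ and the sign of $c_h$: indeed $c_h$ has the same sign as $f_h(\ell_h^+)-f_h(\ell_h^-)$, so $f_h(\ell_h^+)=\max\{f_h(\ell_h^-),f_h(\ell_h^+)\}$ when $c_h>0$, $f_h(\ell_h^+)=\min\{\cdot\}$ when $c_h<0$, and $f_h(\ell_h^-)=f_h(\ell_h^+)$ when $c_h=0$ (in which case both choices agree and the $\pm$ distinction is vacuous).

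First I would dispose of the stationary case: if the traveling wave is stationary then by Lemma \ref{l:cj=0} all $c_h=0$, every $L_{i,j}^\pm$ reduces to an endpoint with $f_i(L_{i,j}^\pm)=f_i(\ell_i^-)=f_i(\ell_i^+)$, the right-hand sides of \eqref{eq:maxfj} and \eqref{eq:minfj} coincide, and \eqref{eq:fjpm} is just either of those two identities; moreover both sign choices give the same equation, consistent with the convention after Proposition \ref{p:main}. Then I would treat the non-stationary case, where by \eqref{e:cicj} we have $c_j\ne0$ for every $j\in\mathsf{J}$, and split according to the sign of $c_j$. If $c_j>0$, then $f_j(\ell_j^+)=\max\{f_j(\ell_j^-),f_j(\ell_j^+)\}$, so by \eqref{eq:maxfj},
\[
f_j(\ell_j^+)=\sum_{i\in\mathsf I}\alpha_{i,j}\max\{f_i(\ell_i^-),f_i(\ell_i^+)\};
\]
now for each $i$ with $c_i>0$ (i.e.\ $c_ic_j\ge0$, actually $>0$) the max is $f_i(\ell_i^+)=f_i(L_{i,j}^+)$, while for $i$ with $c_i<0$ (i.e.\ $c_ic_j<0$) the max is $f_i(\ell_i^-)=f_i(\ell_i^+{}^{\mp})=f_i(L_{i,j}^+)$, and for $i$ with $c_i=0$ all values agree and $L_{i,j}^+=\ell_i^\pm$ is immaterial; hence the right-hand side is exactly $\sum_i\alpha_{i,j}f_i(L_{i,j}^+)$, giving the $+$ case of \eqref{eq:fjpm}. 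The $-$ case with $c_j>0$ is handled identically using \eqref{eq:minfj}, and the case $c_j<0$ is the mirror image, interchanging the roles of max and min; in every instance the definition \eqref{e:Lij} is precisely engineered so that $f_i(L_{i,j}^\pm)$ picks the extremum that appears on the right-hand side of \eqref{eq:maxfj}--\eqref{eq:minfj}.

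For the equivalence, I would note that the argument above is reversible: given \eqref{eq:fjpm} for both signs, one reconstructs \eqref{eq:maxfj} and \eqref{eq:minfj} by the same sign dictionary, since $\{f_j(\ell_j^-),f_j(\ell_j^+)\}=\{f_j(L_{\cdot,j}^+),f_j(L_{\cdot,j}^-)\}$-type matching identifies which of the two equations in \eqref{eq:fjpm} is the ``max'' one and which is the ``min'' one according to $\sgn c_j$, and likewise on the ingoing side. The one point that needs a line of care — and the only place I expect any friction — is the bookkeeping when some $c_i=0$ while $c_j\ne0$: there $c_ic_j\ge0$ so $L_{i,j}^\pm=\ell_i^\pm$ by the first branch of \eqref{e:Lij}, but since $f_i(\ell_i^-)=f_i(\ell_i^+)$ the contribution is unambiguous, so no inconsistency arises. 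Since the pointwise matching is exactly the content of \eqref{e:Lij}, the proof is essentially a verification organized by the three sign patterns of $c_j$ (positive, negative, zero), and I would present it compactly as such.
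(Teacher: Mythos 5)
Your proof is correct and follows essentially the same route as the paper's: both reduce the claim, via Lemma \ref{l:minmax}, to checking that the definition \eqref{e:Lij} makes $f_i(L_{i,j}^\pm)$ coincide with the appropriate $\max$/$\min$ of $\{f_i(\ell_i^-),f_i(\ell_i^+)\}$ according to the signs of $c_i$ and $c_j$, with the stationary case collapsing to a single equation. Your extra care about the subcase $c_i=0$, $c_j\ne0$ matches the paper's use of the branch $c_i\ge0$ in its case distinction.
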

\begin{proof}
Fix $j\in\mathsf{J}$.
By Lemma \ref{l:minmax} it is sufficient to prove that \eqref{eq:fjpm} is equivalent to \eqref{eq:maxfj}-\eqref{eq:minfj}.
If $c_j>0$, and then $f_j(\ell_j^+) > f_j(\ell_j^-)$, by \eqref{e:Lij} we have
\begin{align*}
\max\left\{f_i(\ell_i^-), f_i(\ell_i^+)\right \} &=
\begin{cases}
f_i(\ell_i^+)&\hbox{if }c_i\ge0
\\
f_i(\ell_i^-)&\hbox{if }c_i<0
\end{cases}
=f_i(L_{i,j}^+),
\\
\min\left\{f_i(\ell_i^-), f_i(\ell_i^+)\right\} &=
\begin{cases}
f_i(\ell_i^-)&\hbox{if }c_i\ge0
\\
f_i(\ell_i^+)&\hbox{if }c_i<0
\end{cases}
=f_i(L_{i,j}^-),
\end{align*}
and therefore \eqref{eq:fjpm} is equivalent to \eqref{eq:maxfj}-\eqref{eq:minfj}.
The case $c_j<0$ is analogous.
If $c_j=0$, then $f_j(\ell_j^+) = f_j(\ell_j^-)$ and by Lemma \ref{l:cj=0} we have $f_i(\ell_i^+) = f_i(\ell_i^-)$ for any $i \in \mathsf{I}$. In this case formulas \eqref{eq:maxfj}-\eqref{eq:minfj} reduce to a single equation, which coincides with \eqref{eq:fjpm}.
\end{proof}

\subsection{The stationary case}

In this short subsection we briefly consider stationary traveling waves.

\begin{theorem}\label{t:1}
Problem \eqref{eq:model}-\eqref{eq:Queen} admits infinitely many stationary traveling waves; such waves are characterized by the conditions on the end states
\begin{align}\label{basic}
&f_h(\ell_h^+) = f_h(\ell_h^-),&
&f_j(\ell_j^-) = \sum_{i \in \mathsf{I}} \alpha_{i,j} \, f_i(\ell_i^-)&\hbox{ for }h\in\mathsf{H},\ j\in\mathsf{J}.
\end{align}
\end{theorem}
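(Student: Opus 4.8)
The plan is to verify that a stationary traveling wave is exactly a tuple of profiles $\phi_h$ whose end states satisfy \eqref{basic}, and then to exhibit infinitely many such tuples. First I would use Lemma \ref{l:cj=0}: for a stationary wave $c_h = 0$ for all $h \in \mathsf{H}$, so $g_h = f_h$ and the first condition $f_h(\ell_h^+) = f_h(\ell_h^-)$ in \eqref{basic} is just the content of \eqref{eq:ch} in the degenerate-speed case (recall \eqref{eq:ch} says $c_h = (f_h(\ell_h^+)-f_h(\ell_h^-))/(\ell_h^+-\ell_h^-)$, which vanishes iff $f_h(\ell_h^+)=f_h(\ell_h^-)$). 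Next, by Proposition \ref{p:main}, $\phi$ is a profile of a traveling wave iff each $\phi_h$ solves \eqref{e:infty}-\eqref{eq:Lorenzo4} and \eqref{eq:Tool} holds. With $c_h=0$ and $g_h(\ell_h^\pm) = f_h(\ell_h^\pm)$ (equal for both signs by \eqref{eq:gh}), condition \eqref{eq:Tool} collapses to $f_j(\ell_j^-) = \sum_{i\in\mathsf{I}}\alpha_{i,j}\,f_i(\ell_i^-)$ for every $j \in \mathsf{J}$, which is the second condition in \eqref{basic}. Equivalently one may invoke Proposition \ref{p:Tool}: when all $c_h=0$, \eqref{eq:fjpm} reduces to this single equation. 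This shows a stationary traveling wave exists iff \eqref{basic} is solvable, and is characterized by \eqref{basic}.

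It then remains to produce infinitely many solutions of \eqref{basic}, i.e. to show the system is solvable with a continuum of choices. Here I would argue constructively. Start from the incoming roads: for each $i \in \mathsf{I}$ pick $\ell_i^- = 0$, so $f_i(\ell_i^-) = 0$ by (f), and then the first equation forces $f_i(\ell_i^+) = 0$, hence $\ell_i^+ = 1$ (the only other zero of the strictly concave $f_i$). This is a valid choice with $\ell_i^- < \ell_i^+$. Now for each $j \in \mathsf{J}$ the right-hand side $\sum_{i}\alpha_{i,j} f_i(\ell_i^-) = 0$, so we need $f_j(\ell_j^-) = 0 = f_j(\ell_j^+)$, forcing again $\ell_j^- = 0$, $\ell_j^+ = 1$. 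That gives one stationary wave; the infinitude comes from the shift-freedom of profiles (Theorem \ref{t:E} says profiles are unique only up to translation, and each $\Omega_h$ can be shifted independently), so already this single choice of end states yields infinitely many traveling waves. For a cleaner "infinitely many even at the level of end states'' statement, I would instead fix any $j_0$ and vary: choose the $\ell_i^\pm$ so that $\sigma_j \doteq \sum_i \alpha_{i,j} f_i(\ell_i^-) \in [0, \max_{[0,1]} f_j)$ for each $j$ — this is an open condition, satisfied on a set of positive measure of parameters near the all-zero choice by continuity of the $f_i$ — and then Proposition \ref{rem:DTP} (first line of \eqref{e:ineq}) guarantees, for each $j$, end states $\ell_j^\pm$ with $\ell_j^- < \ell_j^+$ solving \eqref{eq:maxfj}-\eqref{eq:minfj}, which in the stationary case is \eqref{basic}. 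Varying the $\ell_i^-$ over a nondegenerate interval produces infinitely many distinct tuples of end states.

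The only genuinely delicate point is making sure each selected tuple of end states is actually \emph{realized} by a profile — but this is exactly Theorem \ref{t:E}: given $0 \le \ell_h^- < \ell_h^+ \le 1$ and $c_h$ as in \eqref{eq:ch} (here $c_h = 0$), a profile $\phi_h$ exists, unique up to shift, and automatically satisfies \eqref{e:infty}-\eqref{eq:Lorenzo4}. So no new existence analysis is needed; the work is purely the algebraic bookkeeping of \eqref{basic} plus the observation that its solution set is infinite. I expect the main (minor) obstacle to be phrasing the "infinitely many'' claim cleanly — deciding whether to get infinitude from translation invariance of a single profile tuple or from a genuine one-parameter family of end-state data — and in either case it is routine given Theorem \ref{t:E} and Proposition \ref{rem:DTP}.
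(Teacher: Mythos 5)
Your proposal is correct and takes essentially the same route as the paper: the characterization comes from observing that $c_h=0$ makes \eqref{eq:ch} equivalent to $\eqref{basic}_1$ and collapses \eqref{eq:Tool} to $\eqref{basic}_2$, while the infinitude is obtained by choosing $f_i(\ell_i^-)=f_i(\ell_i^+)$ small enough that the strict inequality in \eqref{e:ineq} holds and then solving for $\ell_j^\pm$ by continuity (which is exactly Proposition \ref{rem:DTP}), with Theorem \ref{t:E} supplying the profiles. The only cosmetic difference is that you also note the extra infinitude coming from independent shifts of the profiles, which is valid here since in the stationary case \eqref{eq:Tool} does not constrain the shifts.
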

\begin{proof}
Clearly, \eqref{basic} is trivially satisfied if $\ell_h^-=0$ and $\ell_h^+ = 1$ for all $h \in \mathsf{H}$. We claim that there exist infinitely many choices of $\ell_1^\pm,\ldots,\ell_{m+n}^\pm$ satisfying \eqref{basic}.  To prove the claim, we choose $\ell_i^\pm \in [0,1]$, with $\ell_i^- < \ell_i^+$, such that $f_i(\ell_i^-) = f_i(\ell_i^+)$ are sufficiently small to satisfy the first condition in \eqref{e:ineq} for all $j \in \mathsf{J}$.
Then, by a continuity argument, we can choose $\ell_j^\pm \in [0,1]$ so that $\ell_j^- < \ell_j^+$ and $f_j(\ell_j^-) = f_j(\ell_j^+) = \sum_{i \in \mathsf{I}} \alpha_{i,j} \, f_i(\ell_i^-)$. This proves the claim.

With this choice of the end states, by Theorem~\ref{t:E} we deduce the existence of a stationary traveling wave in each road satisfying \eqref{eq:model}. At last we notice that, in the stationary case, condition \eqref{eq:Tool} is equivalent to the latter condition in \eqref{basic}.
\end{proof}

Clearly, if both $D_h(0)\ne0$ and $D_h(1)\ne0$ for every $h\in\mathsf{H}$, then problem \eqref{eq:model}-\eqref{eq:Queen} admits no degenerate traveling wave. However, even in the general case, the proof of Theorem \ref{t:1} shows that \eqref{eq:model}-\eqref{eq:Queen} admits infinitely many {\em non-degenerate} stationary traveling waves: just choose $0\ne\ell_h^-<\ell_h^+\ne1$ satisfying \eqref{basic}. Moreover, if there exists ${\rm h}\in\mathsf{H}$ such that either $D_{\rm h}(0)=0$ or $D_{\rm h}(1)=0$, then \eqref{eq:model}-\eqref{eq:Queen} admits also infinitely many {\em degenerate} stationary traveling waves: just choose $\ell_{\rm h}^-=0 = 1 - \ell_{\rm h}^+$ and determine the other end states by \eqref{basic}.

\subsection{The non-stationary case}

In this subsection we consider non-stationary traveling waves.
By Lemma \ref{l:cj=0} this is equivalent to consider the scenario in \eqref{e:cicj}: there exists ${\rm i}\in\mathsf{I}$ such that $f_{\rm i}(\ell_{\rm i}^-) \ne f_{\rm i}(\ell_{\rm i}^+)$ and $f_j(\ell_j^-) \ne f_j(\ell_j^+)$ for every $j\in\mathsf{J}$.
We can therefore introduce the following notation:
\begin{align}\label{eq:kappaj}
&c_{i,j} \doteq \frac{c_i}{c_j}, &
&A_{i,j} \doteq \alpha_{i,j} \, c_{i,j},
&k_j \doteq
\ds \sum_{i \in \ins} \left[ A_{i,j} \, L_{i,j}^\pm \right] - \ell_j^\pm,&
&\kappa_j
\doteq c_jk_j,&
\end{align}
where $L_{i,j}$ is defined in \eqref{e:Lij} and
\begin{align*}
&\is \doteq \{i\in\mathsf{I}:c_i=0\} = \{i\in\mathsf{I}:f_i(\ell_i^-) = f_i(\ell_i^+)\},&
&\ins \doteq \mathsf{I} \setminus \is.
\end{align*}
We notice that $\ins\ne\emptyset$ by \eqref{e:cicj} and that both $\is$ and $\ins$ depend on the end states $\ell_i^\pm$, $i\in\mathsf{I}$, indeed.
Moreover, $k_j$ is well defined because by \eqref{eq:fjpm}
\[
\sum_{i \in \ins} A_{i,j} \left[ L_{i,j}^+ - L_{i,j}^- \right] =
\sum_{i \in \ins} \alpha_{i,j} c_j^{-1} \left[ f_i(L_{i,j}^+) - f_i(L_{i,j}^-) \right] =
c_j^{-1} \left[ f_j(\ell_j^+) - f_i(\ell_j^-) \right] = \ell_j^+-\ell_j^-.
\]
Finally, by (f) we deduce that
\begin{equation*}
\hbox{for no $j \in \mathsf{J}$ we have both $\ell_j^-=0=1-\ell_j^+$.}
\end{equation*}

\begin{proposition}\label{p:expl-profiles}
The function $\phi$ is the profile of a non-stationary traveling wave if and only if $\phi_h$ is a solution to \eqref{e:infty}-\eqref{eq:Lorenzo4} for any $h \in \mathsf{H}$ and
\begin{align}\label{eq:1DevinTownsend}
\phi_j(\xi) & =
\sum_{i \in \ins} \left[ A_{i,j} \, \phi_i\left(c_{i,j} \, \xi\right) \right]
- k_j,&
&\xi\in\R,~j\in\mathsf{J}.
\end{align}
\end{proposition}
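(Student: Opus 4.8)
The plan is to read off equation \eqref{eq:1DevinTownsend} from Proposition \ref{p:main} by a direct computation, using the notation introduced in \eqref{eq:kappaj}. First I would invoke Proposition \ref{p:main}: $\phi$ is the profile of a traveling wave precisely when each $\phi_h$ solves \eqref{e:infty}-\eqref{eq:Lorenzo4} and \eqref{eq:Tool} holds for every $j\in\mathsf{J}$. Since we are in the non-stationary case, Lemma \ref{l:cj=0} gives $c_j\ne0$ for all $j\in\mathsf{J}$, so we may divide \eqref{eq:Tool} by $c_j$; writing $\xi=c_jt$ (which ranges over all of $\R$ as $t$ does), this turns \eqref{eq:Tool} into
\[
\phi_j(\xi) + \frac{g_j(\ell_j^\pm)}{c_j}
=
\sum_{i\in\mathsf{I}} \alpha_{i,j}\,\frac{c_i\,\phi_i(c_i t)+g_i(\ell_i^\pm)}{c_j}.
\]
For $i\in\is$ we have $c_i=0$, and then $g_i(\ell_i^+)=g_i(\ell_i^-)=f_i(\ell_i^-)=f_i(\ell_i^+)$; for such terms the contribution $\alpha_{i,j} g_i(\ell_i^\pm)/c_j$ is constant in $t$. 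For $i\in\ins$ we have $c_i\ne0$, and then $c_i t = c_{i,j}\,\xi$, so the $i$-th summand becomes $A_{i,j}\,\phi_i(c_{i,j}\xi) + \alpha_{i,j} g_i(\ell_i^\pm)/c_j$.

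The core of the argument is then a bookkeeping check that the constant terms collapse to $-k_j$. Here the freedom in the choice of signs $\pm$ (guaranteed by Proposition \ref{p:main} via \eqref{eq:gh}) is used to align the signs road by road with the definition \eqref{e:Lij} of $L_{i,j}^\pm$: for $i$ with $c_ic_j\ge0$ keep the same sign, for $c_ic_j<0$ flip it, so that $g_i(\ell_i^{(\text{chosen})}) = g_i(L_{i,j}^\pm)$ in every case, and likewise one verifies that the value $g_j(\ell_j^\pm)$ is well-defined independently of the sign by \eqref{eq:gh}. Recalling $g_h(\rho)=f_h(\rho)-c_h\rho$, the constant obtained on moving everything to the right-hand side is
\[
-\frac{g_j(\ell_j^\pm)}{c_j} + \sum_{i\in\mathsf{I}}\alpha_{i,j}\frac{g_i(L_{i,j}^\pm)}{c_j}
= \ell_j^\pm - \frac{f_j(\ell_j^\pm)}{c_j} + \sum_{i\in\mathsf{I}}\alpha_{i,j}\frac{f_i(L_{i,j}^\pm)}{c_j} - \sum_{i\in\ins} A_{i,j} L_{i,j}^\pm,
\]
where the $i\in\is$ terms carry no $-A_{i,j}L_{i,j}^\pm$ piece since $c_i=0$. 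By Proposition \ref{p:Tool}, $f_j(\ell_j^\pm)=\sum_{i\in\mathsf{I}}\alpha_{i,j}f_i(L_{i,j}^\pm)$, so the two flux sums cancel, leaving exactly $\ell_j^\pm - \sum_{i\in\ins}A_{i,j}L_{i,j}^\pm = -k_j$ by the definition of $k_j$ in \eqref{eq:kappaj}. This yields \eqref{eq:1DevinTownsend}.

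The main thing to be careful about — rather than a genuine obstacle — is the sign bookkeeping: one must check that after choosing, for each $i\in\mathsf{I}$, the sign dictated by \eqref{e:Lij}, the leftover constant is genuinely independent of the single overall sign $\pm$ appearing on the left via $g_j(\ell_j^\pm)$ and on the right via $L_{i,j}^\pm$, and that this is consistent with the claim (already noted before the proposition) that $k_j$ does not depend on the choice of sign. Both facts follow from \eqref{eq:gh} applied on road $j$ and on each road $i\in\ins$: the value of $g_h$ at the two end states coincides, and this is exactly what makes \eqref{eq:Tool} — and hence \eqref{eq:1DevinTownsend} — insensitive to the sign convention. Once this is settled, the equivalence is immediate in both directions, since every manipulation above (dividing by $c_j\ne0$, the substitution $\xi=c_jt$, splitting $\mathsf I=\is\cup\ins$) is reversible.
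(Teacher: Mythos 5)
Your proposal is correct and follows essentially the same route as the paper: reduce to Proposition \ref{p:main}, change variables via $\xi=c_jt$ (legitimate since $c_j\neq0$ by Lemma \ref{l:cj=0}), use \eqref{eq:gh} to make the sign choice consistent with $L_{i,j}^\pm$, and use \eqref{eq:fjpm} to identify the leftover constant with $-k_j$. The paper merely packages your flux-cancellation bookkeeping into the single identity $\kappa_j = g_j(\ell_j^\pm) - \sum_{i\in\mathsf{I}}\alpha_{i,j}\,g_i(L_{i,j}^\pm)$, which is the same computation.
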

\begin{proof}
By Proposition \ref{p:main} it is sufficient to prove that by \eqref{e:cicj} condition \eqref{eq:Tool} is equivalent to \eqref{eq:1DevinTownsend}.
By \eqref{eq:gh} we have $g_i(\ell_i^+) = g_i(\ell_i^-) = g_i(L_{i,j}^+) = g_i(L_{i,j}^-)$ and then by \eqref{eq:fjpm} we have $\kappa_j = g_j(\ell_j^\pm) - \sum_{i \in \mathsf{I}} \alpha_{i,j} g_i(L_{i,j}^\pm)$.
Hence, by \eqref{e:cicj}, with the change of variable $\xi = c_jt$, condition \eqref{eq:Tool} is
\[
c_j \, \phi_j(\xi) =
- g_j(\ell_j^\pm) + \sum_{i \in \mathsf{I}} \alpha_{i,j} \, \left[ c_i \, \phi_i(c_{i,j}\xi) + g_i(L_{i,j}^\pm) \right]
=
\sum_{i \in \mathsf{I}} \alpha_{i,j} \, c_i \, \phi_i(c_{i,j}\xi) - \kappa_j,
\]
that is equivalent to \eqref{eq:1DevinTownsend}.
\end{proof}

We observe that $k_j$ and \eqref{eq:1DevinTownsend} can be written in a little bit more explicit form by avoiding the use of $L_{i,j}^\pm$ as follows
\begin{align}\label{eq:kappajbis}
k_j &= \sum_{i \in \ins} \left[ A_{i,j} \, \frac{\ell_i^- + \ell_i^+}{2}\right] - \frac{\ell_j^- + \ell_j^+}{2},
\\\nonumber
\phi_j(\xi)
&=
\frac{\ell_j^- + \ell_j^+}{2}
+
\sum_{i \in \ins} A_{i,j} \left[ \phi_i\left(c_{i,j} \, \xi\right) - \frac{\ell_i^- + \ell_i^+}{2}\right].
\end{align}
Proposition \ref{p:expl-profiles} shows how each outgoing profile $\phi_j$ can be expressed by \eqref{eq:1DevinTownsend} in terms of the ingoing profiles $\phi_i$, $i\in\mathsf{I}$.
We know a priori that $\phi_j$ is increasing and its end states are contained in the interval $[0,1]$.
Now, we prove a sort of converse implication, which shows that these properties of the profile $\phi_j$ are enjoined by the function defined by the right-hand side of \eqref{eq:1DevinTownsend}.

\begin{lemma}\label{l:phij}
Let $\phi_i$,  for $i\in\mathsf{I}$, be the profiles provided by Theorem~\ref{t:E} and assume that $\ins\ne\emptyset$; fix $j\in\mathsf{J}$ and consider any $l_j^\pm\in[0,1]$ satisfying \eqref{eq:fjpm} and such that, for the corresponding $c_j$, it holds $c_j\ne0$.
Then $l_j^- < l_j^+$. Moreover, denote by $\ell_j(\xi)$ the right-hand side of \eqref{eq:1DevinTownsend}; then $\xi\mapsto\ell_j(\xi)$ is non-decreasing and $\ell_j(\pm\infty)=l_j^\pm$.

\end{lemma}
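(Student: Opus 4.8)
The plan is to verify the three assertions of Lemma~\ref{l:phij} in turn, exploiting the algebraic identities already established and the monotonicity of the ingoing profiles from Theorem~\ref{t:E}.

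First I would record the sign information. Since $c_j\ne0$, condition \eqref{eq:fjpm} gives $f_j(l_j^+)-f_j(l_j^-)=\sum_{i\in\ins}\alpha_{i,j}[f_i(L_{i,j}^+)-f_i(L_{i,j}^-)]$; the exact same telescoping computation performed just before Proposition~\ref{p:expl-profiles} shows this equals $c_j(l_j^+-l_j^-)$, and also equals $\sum_{i\in\ins}\alpha_{i,j}c_i\,c_{i,j}^{-1}\cdots$ — more usefully, each coefficient $A_{i,j}=\alpha_{i,j}c_{i,j}$ times $[L_{i,j}^+-L_{i,j}^-]$ is nonnegative: indeed $L_{i,j}^+-L_{i,j}^-=\ell_i^+-\ell_i^->0$ when $c_ic_j\ge0$ (so $A_{i,j}\ge0$), while $L_{i,j}^+-L_{i,j}^-=\ell_i^--\ell_i^+<0$ when $c_ic_j<0$ (so $A_{i,j}<0$); either way the product $A_{i,j}[L_{i,j}^+-L_{i,j}^-]=|\alpha_{i,j}c_{i,j}|\,(\ell_i^+-\ell_i^-)>0$. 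Hence $\ell_j(\xi)$ as defined by the right-hand side of \eqref{eq:1DevinTownsend} — namely $\sum_{i\in\ins}A_{i,j}\phi_i(c_{i,j}\xi)-k_j$ — is a sum of terms each of which is monotone in the \emph{same} direction: if $A_{i,j}>0$ then $\xi\mapsto\phi_i(c_{i,j}\xi)$ is non-decreasing precisely when $c_{i,j}>0$, i.e.\ $c_ic_j>0$, which is exactly the case $A_{i,j}>0$; and if $A_{i,j}<0$ then $c_ic_j<0$, so $c_{i,j}<0$ and $\xi\mapsto\phi_i(c_{i,j}\xi)$ is non-increasing, so $A_{i,j}\phi_i(c_{i,j}\xi)$ is again non-decreasing. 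Thus $\xi\mapsto\ell_j(\xi)$ is non-decreasing.

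Next I would compute the limits. As $\xi\to+\infty$, for each $i\in\ins$ the argument $c_{i,j}\xi\to+\infty$ if $c_ic_j>0$ and $\to-\infty$ if $c_ic_j<0$, so $\phi_i(c_{i,j}\xi)\to\ell_i^+$ in the first case and $\to\ell_i^-$ in the second; by the definition \eqref{e:Lij} of $L_{i,j}^\pm$ this limit is exactly $L_{i,j}^+$ in both cases. Therefore $\ell_j(+\infty)=\sum_{i\in\ins}A_{i,j}L_{i,j}^+-k_j=l_j^+$ by the definition \eqref{eq:kappaj} of $k_j$ (read with the ``$+$'' sign and with $\ell_j^\pm$ replaced by $l_j^\pm$). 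Symmetrically $\ell_j(-\infty)=\sum_{i\in\ins}A_{i,j}L_{i,j}^--k_j=l_j^-$. Finally, since $\ell_j$ is non-decreasing and $\ell_j(-\infty)=l_j^-$, $\ell_j(+\infty)=l_j^+$, and since $\ell_j$ is non-constant (each summand is strictly monotone on the respective $I_i$, which is nonempty as $\ins\ne\emptyset$), we get $l_j^-<l_j^+$; alternatively this follows directly from $c_j(l_j^+-l_j^-)=\sum_{i\in\ins}|A_{i,j}|(\ell_i^+-\ell_i^-)$ having the same sign as $c_j$ on both sides only if $l_j^+>l_j^-$.

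The only genuinely delicate point is the bookkeeping of signs in the case split $c_ic_j\ge0$ versus $c_ic_j<0$ inside the definition of $L_{i,j}^\pm$ and $A_{i,j}$; once one checks that ``$A_{i,j}$ and the direction of $\phi_i(c_{i,j}\cdot)$ always conspire to give a non-decreasing summand,'' and that the limit of $\phi_i(c_{i,j}\xi)$ as $\xi\to\pm\infty$ is precisely $L_{i,j}^\pm$, everything else is immediate from Theorem~\ref{t:E} and the definitions in \eqref{eq:kappaj}. I expect no essential obstacle beyond this case analysis; in particular one should be slightly careful that for $i\in\ins$ with $c_i<0$ the argument $c_{i,j}\xi$ runs over all of $\R$ so that the limit statements of Theorem~\ref{t:E}(i)--(iii) apply verbatim.
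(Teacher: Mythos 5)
Your proposal is correct and follows essentially the same route as the paper's proof: the identity $l_j^+-l_j^-=\sum_{i\in\ins}\alpha_{i,j}|c_{i,j}|(\ell_i^+-\ell_i^-)>0$ from \eqref{eq:fjpm}, monotonicity from the fact that each summand $A_{i,j}\phi_i(c_{i,j}\cdot)$ is non-decreasing (the paper packages your sign case-split as the single formula $\ell_j'(\xi)=\sum_{i\in\ins}\alpha_{i,j}c_{i,j}^2\phi_i'(c_{i,j}\xi)\ge0$), and the limits from $\phi_i(c_{i,j}\xi)\to L_{i,j}^\pm$ together with the definition of $k_j$. The only blemish is the stray factor of $c_j$ in your alternative identity for $l_j^+-l_j^-$ (it should read $l_j^+-l_j^-=\sum_{i\in\ins}|A_{i,j}|(\ell_i^+-\ell_i^-)$), which does not affect the argument since your primary derivation already establishes the strict inequality.
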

\begin{proof}
Since by Theorem \ref{t:E} we know that $\ell_i^-<\ell_i^+$, then by \eqref{eq:fjpm}
\begin{align*}
l_j^+-l_j^- =
c_j^{-1} \left[ f_j(l_j^+) - f_j(l_j^-) \right] =
\sum_{i \in \mathsf{I}} \alpha_{i,j} \, c_j^{-1} \left[ f_i(L_{i,j}^+) - f_i(L_{i,j}^-) \right]
\\=
\sum_{i \in \mathsf{I}} \alpha_{i,j} \, c_{i,j} \left[L_{i,j}^+- L_{i,j}^-\right] =
\sum_{i \in \ins} \alpha_{i,j} \, |c_{i,j}| \left[\ell_i^+ - \ell_i^-\right] &> 0.
\end{align*}
By definition of $\ell_j$ we have
\(
\ell_j'(\xi) =
\sum_{i \in \ins} \alpha_{i,j} \, c_{i,j}^2 \, \phi_i'\left(c_{i,j} \, \xi\right)
\)
for a.e.\ $\xi\in\R$, hence $\xi\mapsto\ell_j(\xi)$ is non-decreasing since all profiles $\phi_i$ do.
Moreover, $\ell_j(\pm\infty) = l_j^\pm$ because by the definitions of $\ell_j$ and $\kappa_j$  we have
\[
c_j\,\ell_j(\pm\infty) =
\sum_{i \in \ins} \left[ \alpha_{i,j} \, c_i \, L_{i,j}^\pm \right]
- \kappa_j =
c_j \, l_j^\pm.\qedhere
\]
\end{proof}

We notice that Proposition \ref{p:expl-profiles} exploits condition \eqref{eq:Queen} through its expression \eqref{eq:Tool} for the profiles; the diffusivities $D_h$ are not involved in \eqref{eq:1DevinTownsend}. Indeed, Proposition \ref{p:expl-profiles} imposes strong necessary conditions on the diffusivities as we discuss now as a preparation to \eqref{eq:RATM}.

We notice that if both $\nu_h^-$ and $\nu_h^+$ are finite, then $\ell_h^-=0=1-\ell_h^+$ and consequently $c_h=0$; therefore either $\nu_h^-$ or $\nu_h^+$ (possibly both) is infinite for any $h\in \ins \cup \mathsf{J}$.

The following result is similar to Lemma \ref{l:cj=0}.
\begin{lemma}\label{l:prop-deg}
Problem \eqref{eq:model}-\eqref{eq:Queen} admits a degenerate non-stationary traveling wave $\rho$ if and only if at least one of the following conditions holds:
\begin{itemize}
\item[(A)] for some $i \in \is$ we have $D_i(0) D_i(1)= 0$ and $\ell_i^- = 0$ (hence $\ell_i^+=1$);
\item[(B)] for every $h \in \ins\cup\mathsf{J}$ we have either $D_h(0) = 0 = \ell_h^-$ or $D_h(1) = 0 = \ell_h^+-1$, but not both. In this case we have
\begin{align}\label{eq:minchIh}
&\omega_i = \omega_j\doteq \omega,&
&i\in\ins,\ j \in \mathsf{J}.
\end{align}
\end{itemize}
\end{lemma}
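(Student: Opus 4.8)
The plan is to characterize degeneracy of a non-stationary traveling wave $\rho$ via its components, using the already-established fact (Definition~\ref{d:tw}) that $\rho$ is degenerate iff some component $\rho_h$ is degenerate, together with the explicit representation \eqref{eq:1DevinTownsend} of the outgoing profiles in terms of the ingoing ones. First I would split $\mathsf{H} = \is \cup \ins \cup \mathsf{J}$. A component $\rho_i$ with $i\in\is$ has $c_i=0$, so by Lemma~\ref{lem:simple}(a) it is degenerate iff $D_i(0)D_i(1)=0$ and $\ell_i^-=0$ (hence $\ell_i^+=1$); this is exactly condition~(A). For $h\in\ins\cup\mathsf{J}$ we have $c_h\ne0$ by \eqref{e:cicj}, so Lemma~\ref{lem:simple}(b) applies: $\rho_h$ is degenerate iff exactly one of $D_h(0)=0=\ell_h^-$ or $D_h(1)=0=\ell_h^+-1$ holds, equivalently iff $\omega_h$ is finite. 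Thus $\rho$ is degenerate iff (A) holds or at least one $h\in\ins\cup\mathsf{J}$ has $\omega_h$ finite; the content of the lemma is that in the latter case \emph{all} $h\in\ins\cup\mathsf{J}$ have finite $\omega_h$ and moreover these values coincide, which forces the ``for every $h$'' form of~(B).

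The key step, then, is to show that if one $h_0\in\ins\cup\mathsf{J}$ has $\omega_{h_0}$ finite, then $\omega_h$ is finite for every $h\in\ins\cup\mathsf{J}$ and all these numbers are equal. The tool is \eqref{eq:1DevinTownsend}: fix $j\in\mathsf{J}$; then $\phi_j(\xi) = \sum_{i\in\ins} A_{i,j}\,\phi_i(c_{i,j}\xi) - k_j$ with all $A_{i,j}>0$ (since $\alpha_{i,j}>0$ and $c_{i,j}\ne0$ for $i\in\ins$). Recall from Lemma~\ref{lem:simple}(b) that for a non-stationary degenerate component the profile equals a constant end state on a half-line starting (or ending) at $\nu_h^\pm = c_h\omega_h$, while for a non-degenerate component $\phi_h$ is strictly increasing on all of $\R$ with limits $\ell_h^\pm$ never attained. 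Now, $\phi_i(c_{i,j}\xi)$, as a function of $\xi$, is constant exactly on the half-line $\{\xi : c_{i,j}\xi \le \nu_i^-\}$ or $\{\xi : c_{i,j}\xi \ge \nu_i^+\}$ when $\rho_i$ is degenerate — i.e.\ on a half-line with endpoint $c_{i,j}^{-1}\nu_i^{\mp} = c_j\,c_i^{-1}\nu_i^{\mp}$. Because all terms $A_{i,j}\phi_i(c_{i,j}\cdot)$ are non-decreasing, a sum of them is locally constant at a point $\xi$ iff every summand is; and each summand is eventually constant on a half-line iff the corresponding $\rho_i$ is degenerate. Hence $\phi_j$ is eventually constant on a half-line (equivalently $\rho_j$ degenerate, equivalently $\nu_j^\pm$ finite, equivalently $\omega_j$ finite) iff \emph{every} $i\in\ins$ has $\rho_i$ degenerate, and in that case all the half-line endpoints must agree, which reads $c_j^{-1}\nu_j^{\mp} = c_i^{-1}\nu_i^{\mp}$, i.e.\ $\omega_j = \omega_i$ (up to identifying which of $\nu_h^\pm$ is finite, using \eqref{e:omegah}). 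Since this holds for every $j\in\mathsf{J}$ and $\ins\ne\emptyset$, we get $\omega_i=\omega_j\doteq\omega$ for all $i\in\ins$, $j\in\mathsf{J}$, which is \eqref{eq:minchIh}; and the statement ``$D_h(0)=0=\ell_h^-$ or $D_h(1)=0=\ell_h^+-1$ but not both'' for each such $h$ is just the restatement of degeneracy from Lemma~\ref{lem:simple}(b). Conversely, if either (A) or (B) holds, the corresponding component is degenerate, hence $\rho$ is.

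A subtlety I would be careful about is the matching of signs: for a non-stationary degenerate $\rho_h$, whether the finite endpoint is $\nu_h^-$ or $\nu_h^+$ is pinned down by the sign of $c_h$ (as in the proof of Lemma~\ref{lem:simple}(b): $\nu_h^-$ finite forces $c_h>0$, $\nu_h^+$ finite forces $c_h<0$), and $\omega_h$ is then simply $\nu_h^-/c_h$ or $\nu_h^+/c_h$ respectively. When one transports $\phi_i$ through the dilation $\xi\mapsto c_{i,j}\xi$ one must track whether $c_{i,j}>0$ or $<0$ — this is exactly where the definition \eqref{e:Lij} of $L_{i,j}^\pm$ and the ``flip'' in the end states enters — but the qualitative conclusion (half-line of constancy, with endpoint $\omega_i$ in the $\xi$-variable after dividing by the appropriate speed) is sign-independent because $\omega_h$ was defined in \eqref{e:omegah} precisely as the relevant ratio.

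The main obstacle, and the place I expect to spend the most care, is making rigorous the elementary-looking claim that a finite positive-coefficient sum of non-decreasing functions, each either strictly increasing on $\R$ with unattained limits or constant on a half-line and strictly increasing on the complementary half-line, is itself of the latter type iff \emph{all} summands are of the latter type, \emph{and} that in that case the half-line endpoints must coincide. The ``all summands constant'' direction is immediate; the converse — if the sum is constant on a neighbourhood of $\xi$ then so is each summand — uses that each summand is non-decreasing so none can compensate a strict increase in another. The endpoint-coincidence needs that a strictly increasing non-degenerate $\phi_i$ never becomes constant, so if even one $i\in\ins$ were non-degenerate the sum could not be eventually constant; and once all are degenerate, the leftmost (resp.\ rightmost) of the constancy half-lines would still leave the sum strictly increasing just beyond it unless they all share the same endpoint. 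I would phrase this via the derivative identity $\ell_j'(\xi)=\sum_{i\in\ins}\alpha_{i,j}c_{i,j}^2\phi_i'(c_{i,j}\xi)$ from the proof of Lemma~\ref{l:phij}, noting each term is $\ge 0$ and vanishes on an interval iff the corresponding $\phi_i'$ does, so $\ell_j'\equiv 0$ on a half-line iff every $\phi_i'$ vanishes on the image half-line — and the images being half-lines with a common endpoint is exactly \eqref{eq:minchIh}.
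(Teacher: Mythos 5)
Your overall skeleton is the paper's: split $\mathsf{H}$ into $\is$ and $\ins\cup\mathsf{J}$, reduce degeneracy of each component to Lemma~\ref{lem:simple}, and use the transmission relation \eqref{eq:1DevinTownsend} to propagate degeneracy among the components with nonzero speed. The difference is the mechanism in the key step, and it leaves a genuine gap. Your monotonicity argument (each summand $A_{i,j}\phi_i(c_{i,j}\cdot)$ is non-decreasing --- note in passing that your claim $A_{i,j}>0$ is false when $c_ic_j<0$, though the identity $\tfrac{d}{d\xi}\bigl[A_{i,j}\phi_i(c_{i,j}\xi)\bigr]=\alpha_{i,j}c_{i,j}^2\phi_i'(c_{i,j}\xi)\ge 0$ saves the monotonicity) proves exactly this: $\phi_j$ is constant on a half-line if and only if \emph{every} $\phi_i$, $i\in\ins$, is, and in that case $\omega_j=\min_{i\in\ins}\omega_i$. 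That gives $(B)''\Rightarrow(B)$, but not $(B)'\Rightarrow(B)''$: if the one component known to be degenerate is an ingoing one, say $\rho_{i_0}$ with $i_0\in\ins$ and $\ins$ containing at least two indices, your equivalence yields no information about $\phi_j$ unless you already know that \emph{all} the other ingoing components are degenerate --- which is precisely what must be proved. Likewise, your justification of the endpoint coincidence (``the leftmost of the constancy half-lines would still leave the sum strictly increasing just beyond it'') produces no contradiction: a sum of degenerate-type summands with distinct $\omega_i$'s is still constant on a half-line and strictly increasing beyond $\min_i\omega_i$, i.e.\ it has exactly the shape of an admissible degenerate profile with $\omega_j=\min_i\omega_i$; monotonicity alone never forces $\max_i\omega_i=\min_i\omega_i$.

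The missing ingredient --- and the one the paper's proof actually uses --- is the regularity statement of Lemma~\ref{lem:simple}\emph{(b)}: $\xi\mapsto\phi_h'(c_h\xi)$ is $\C1$ away from the single point $\omega_h$ and has there a genuine, non-removable singularity, namely a jump from $0$ up to a strictly positive or infinite one-sided limit, by \eqref{e:slope0}--\eqref{e:slope1} and the strict concavity in (f). Differentiating \eqref{eq:1DevinTownsend} gives \eqref{e:squares}, a sum of non-negative terms, so these upward jumps cannot cancel: the discontinuity set of the right-hand side is exactly $\{\omega_i:\ i\in\ins,\ \omega_i\in\R\}$, while that of the left-hand side is $\{\omega_j\}\cap\R$. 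Matching these sets shows at once that a single finite $\omega_{i_0}$ forces $\omega_j$ to be finite and equal to it, and that all finite $\omega_i$'s coincide with $\omega_j$; combined with your relation $\omega_j=\min_{i\in\ins}\omega_i$ (which is what excludes a non-degenerate ingoing component coexisting with degenerate ones), this closes both gaps and yields \eqref{eq:minchIh}. In short, your constancy argument is a correct and useful half of the proof, but the jump-matching half on \eqref{e:squares} is indispensable and is absent from your write-up.
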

\begin{proof}
Let us introduce the following conditions:
\begin{enumerate}
\item[$(B)'$] for some $i \in \ins$ we have either $D_i(0) = 0 = \ell_i^-$ or $D_i(1) = 0 = \ell_i^+-1$, but \emph{not} both;
\item[$(B)''$] for some $j \in \mathsf{J}$ we have either $D_j(0) = 0 = \ell_j^-$ or $D_j(1) = 0 = \ell_j^+-1$, but \emph{not} both.
\end{enumerate}
Clearly $(B)$ implies both $(B)'$ and $(B)''$.
Moreover, by Lemma \ref{lem:simple} and \eqref{e:cicj}, problem \eqref{eq:model}-\eqref{eq:Queen} admits a degenerate non-stationary traveling wave $\rho$ if and only if at least one of the conditions \emph{(A)}, $(B)'$ and $(B)''$ holds.
To complete the proof it is therefore sufficient to show that \emph{(B)}, $(B)'$ and $(B)''$ are equivalent.
By Lemma \ref{lem:simple}{\em (b)} and \eqref{e:cicj}, the conditions \emph{(B)}, $(B)'$ and $(B)''$ are respectively equivalent to
\begin{enumerate}
\item[(I)] $\omega_h$ is finite for every $h \in \ins \cup \mathsf{J}$,
\item[(II)] for some $i\in\ins$ we have that $\omega_i$ is finite,
\item[(III)] for some $j\in\mathsf{J}$ we have that $\omega_j$ is finite,
\end{enumerate}
where $\omega_h$ is defined in \eqref{e:omegah}.
Differentiating \eqref{eq:1DevinTownsend} gives
\begin{align}
\phi_j'(c_j\xi) & =
\sum_{i \in \ins} \alpha_{i,j} c_{i,j}^2 \phi_i'\left(c_i\xi\right)&
&\hbox{ for a.e.\ }\xi\in\R,\ j \in \mathsf{J}.
\label{e:squares}
\end{align}
More precisely, by Lemma \ref{lem:simple}, formula \eqref{e:squares} holds for $\xi\in\R\setminus\left(\{\omega_j\}\cup\bigcup_{i\in\ins}\omega_i\right)$; moreover, by the same lemma we know that $\xi\mapsto\phi_h'(c_h\xi)$ is singular at $\xi = \omega_h$ and $\C1$ elsewhere, for $h\in\ins\cup\mathsf{J}$.
Hence, \eqref{e:squares} implies \eqref{eq:minchIh}. By \eqref{eq:minchIh} we have that the above statements (I), (II) and (III) are equivalent and then also \emph{(B)}, $(B)'$ and $(B)''$ are so.
\end{proof}

As for Lemma \ref{l:cj=0}, we notice that Lemma \ref{l:prop-deg} implies that a non-stationary traveling wave $\rho$ is either non-degenerate, and then $\rho_h$ is non-degenerate for every $h \in \mathsf{H}$, or $\rho$ is degenerate, and then either there exists ${\rm i} \in \is$ such that $\rho_{\rm i}$ is degenerate, or $\rho_h$ is degenerate for all $h \in \ins \cup \mathsf{J}$.
In both cases a non-stationary traveling wave $\rho$ satisfies \eqref{eq:minchIh}.

When modeling traffic flows it is natural to use different diffusivities, which however share some common properties.
For instance, this led to consider in \cite{BTTV, Corli-Malaguti} the following subcase of (D):
\begin{itemize}

\item[(D1)] $D_h$ satisfies (D) and $D_h(0)=0$, $D_h(1)>0$, for every $h\in\mathsf{H}$.

\end{itemize}

The proof of the following result is an immediate consequence of Lemma \ref{l:prop-deg} and, hence, omitted.

\begin{corollary}
Assume that problem \eqref{eq:model}-\eqref{eq:Queen} has a non-stationary traveling wave $\rho$ and  {\rm (D1)} holds. Then $\rho$ is degenerate if and only at least one of the following conditions holds:
\begin{itemize}
\item[(A)] for some $i \in \is$ we have $\ell_i^- = 0$ (hence $\ell_i^+ = 1$);
\item[(B)] for \emph{every} $h \in \ins\cup\mathsf{J}$ we have $\ell_h^- = 0$ (hence $\ell_h^+ \neq 1$).
\end{itemize}
\end{corollary}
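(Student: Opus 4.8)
The plan is to obtain this corollary by simply specializing Lemma~\ref{l:prop-deg} to diffusivities satisfying (D1). By that lemma, the non-stationary traveling wave $\rho$ is degenerate if and only if condition (A) or condition (B) \emph{of Lemma~\ref{l:prop-deg}} holds, so it suffices to check that, under (D1), those two conditions collapse exactly to the two conditions (A) and (B) stated here.

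First I would treat condition (A). Since (D1) forces $D_i(0)=0$ for every $i\in\mathsf{H}$, the requirement $D_i(0)D_i(1)=0$ appearing in Lemma~\ref{l:prop-deg}(A) is automatically true, so that condition reduces to the existence of some $i\in\is$ with $\ell_i^-=0$; this is precisely (A) above. The parenthetical $\ell_i^+=1$ is then forced: $i\in\is$ means $c_i=0$, hence $f_i(\ell_i^-)=f_i(\ell_i^+)$ by \eqref{eq:ch}, and combined with $\ell_i^-=0$ assumption (f) (namely $f_i(0)=f_i(1)=0$ with $f_i$ strictly concave, hence positive on $(0,1)$) gives $\ell_i^+=1$. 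Next I would treat condition (B). Since (D1) gives $D_h(1)>0$ for every $h\in\mathsf{H}$, the alternative ``$D_h(1)=0=\ell_h^+-1$'' in Lemma~\ref{l:prop-deg}(B) can never occur, the ``but not both'' proviso is therefore vacuous, and the clause reduces to ``$D_h(0)=0=\ell_h^-$ for every $h\in\ins\cup\mathsf{J}$''; as $D_h(0)=0$ always holds under (D1), this is exactly ``$\ell_h^-=0$ for every $h\in\ins\cup\mathsf{J}$'', i.e.\ (B) above (and of course \eqref{eq:minchIh} continues to hold, being part of Lemma~\ref{l:prop-deg}(B)). The parenthetical $\ell_h^+\neq1$ follows from non-stationarity: for $h\in\ins$ one has $c_h\neq0$ by definition of $\ins$, while for $h\in\mathsf{J}$ one has $c_h\neq0$ by \eqref{e:cicj}; in either case $\ell_h^-=0$ together with $\ell_h^+=1$ would give $c_h=\bigl(f_h(1)-f_h(0)\bigr)/(1-0)=0$ by \eqref{eq:ch}, a contradiction.

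I do not expect a genuine obstacle here: the whole argument is a mechanical reduction of Lemma~\ref{l:prop-deg}. The only points that require a moment's care—and the ones I would make sure to spell out—are that under (D1) the two auxiliary conditions $D_i(0)D_i(1)=0$ in (A) and ``but not both'' in (B) become trivial or vacuous, and that the end-state parentheticals ($\ell_i^+=1$ in (A), $\ell_h^+\neq1$ in (B)) are consistent with \eqref{eq:ch} via assumption (f) and the non-stationarity condition \eqref{e:cicj}. This is exactly why the authors can state that the proof ``is an immediate consequence of Lemma~\ref{l:prop-deg}''.
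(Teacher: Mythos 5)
Your proof is correct and follows exactly the route the paper intends: the paper omits the proof, stating it is an immediate consequence of Lemma~\ref{l:prop-deg}, and your argument is precisely the routine specialization of that lemma under (D1), with the parentheticals on the end states correctly justified via \eqref{eq:ch}, assumption (f), and \eqref{e:cicj}. No gaps.
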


The case when $D_h$ satisfies (D) and $D_h(0)=0=D_h(1)$ for every $h\in\mathsf{H}$, see \cite{Corli-diRuvo-Malaguti, Corli-Malaguti}, can be dealt analogously.

The next result is the most important of this paper; there, we give necessary and sufficient conditions for the existence of non-stationary traveling waves in a network. About its statement, let us recall Theorem \ref{t:E}: we have $\phi_h'(\xi)=0$ in case {\em (i)} if $\xi<\nu_h^-$ or in case {\em (ii)} if $\xi>\nu_h^+$. Since $\phi_h$ satisfies equation \eqref{eq:Lorenzo4}, we are led to extend the quotient $\ell\mapsto \frac{g_h(\ell)-g_h(\ell_h^-)}{D_h(\ell)}$ to the whole of $\R$ by defining
\begin{equation}\label{e:gammah}
\gamma_h(\ell) \doteq
\begin{cases}
\frac{g_h(\ell)-g_h(\ell_h^-)}{D_h(\ell)} & \hbox{ if }D_h(\ell)\ne0,
\\
0& \hbox{ if }D_h(\ell)=0.
\end{cases}
\end{equation}
In fact, when $\ell$ is replaced by $\phi_h(\xi)$, then $\gamma_h(\ell)=\phi_h'(\xi)$ for $\xi \in \R \setminus \{\nu_h^-, \nu_h^+\}$. We remark that condition $D_h(\ell)=0$ occurs at most when either $\ell=0$ or $\ell=1$. To avoid the introduction of the new notation \eqref{e:gammah}, in the following we simply keep on writing $\frac{g_h(\ell)-g_h(\ell_h^-)}{D_h(\ell)}$ for $\gamma_h(\ell)$. As a consequence, any non-stationary traveling wave of problem \eqref{eq:model}-\eqref{eq:Queen} satisfies
\begin{align}\label{e:agree}
&\phi_h'(\xi)
=
\dfrac{g_h\left(\phi_h(\xi)\right) - g_h(\ell_h^-)}{D_h\left(\phi_h(\xi)\right)},&
&\xi \in\R\setminus\{ \nu_h^-, \, \nu_h^+\}, ~h \in \mathsf{H}.
\end{align}

\begin{theorem}\label{thm:1}
Assume conditions {\rm (f)} and {\rm (D)}.
Problem \eqref{eq:model}-\eqref{eq:Queen} admits a non-stationary traveling wave if and only if the following condition holds.

\begin{itemize}
\item[($\mathcal{T}$)] There exist $\ell_1^\pm,\ldots,\ell_m^\pm\in [0,1]$ with $\ell_i^- < \ell_i^+$, $i\in\mathsf{I}$, such that:
\begin{enumerate}

\item[(i)]  $\ins\ne\emptyset$;

\item[(ii)] for any $j \in \mathsf{J}$ there exist $\ell_j^\pm\in[0,1]$ satisfying \eqref{eq:fjpm} and such that $f_j(\ell_j^-) \ne f_j(\ell_j^+)$;

\item[(iii)] for any $j \in \mathsf{J}$ we have
\begin{align}\label{eq:RATM}
&\dfrac{g_j\left(\ell_j(c_j \, \xi)\right) - g_j(\ell_j^-)}{D_j\left(\ell_j(c_j \, \xi)\right)}
=
\sum_{i \in \ins} A_{i,j} \, c_{i,j} \,
\dfrac{g_i\left(\phi_i( c_i \, \xi)\right) - g_i(\ell_i^-)}{D_i\left(\phi_i( c_i \, \xi)\right)}&
&\hbox{for a.e.\ }\xi \in \R,
\end{align}
where $\phi_1,\ldots,\phi_m$ are solutions to \eqref{e:infty}-\eqref{eq:Lorenzo4} and, for $k_j$ as in \eqref{eq:kappaj},
\begin{align}\label{e:elljn}
&\ell_j(\xi) \doteq \sum_{i \in \ins} \left[ A_{i,j} \, \phi_i\left(c_{i,j} \, \xi\right) \right]
- k_j,&
&\xi\in\R.
\end{align}
\end{enumerate}
\end{itemize}
\end{theorem}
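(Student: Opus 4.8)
The plan is to prove the equivalence by unwinding the chain of characterizations already established in this section and then isolating exactly what remains. By Definition~\ref{d:tw}, a non-stationary traveling wave is a family $\rho_h(t,x)=\phi_h(x-c_ht)$, $h\in\mathsf{H}$, each component solving $\eqref{eq:model}_h$ and satisfying \eqref{eq:Queen}, with at least one $c_h\ne0$. By Lemma~\ref{l:cj=0} this forces the scenario \eqref{e:cicj}, hence $\ins\ne\emptyset$ and $f_j(\ell_j^-)\ne f_j(\ell_j^+)$ for all $j\in\mathsf{J}$; this gives (i) and the inequality in (ii). By Theorem~\ref{t:E}, each profile $\phi_h$ exists (uniquely up to shift), satisfies \eqref{e:infty}--\eqref{eq:Lorenzo4}, and the end states and speeds obey \eqref{eq:ch}. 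By Proposition~\ref{p:Tool}, condition \eqref{eq:Queen} forces \eqref{eq:fjpm}, which is the remaining part of (ii). Finally Proposition~\ref{p:expl-profiles} tells us that, given the single-equation solutions, \eqref{eq:Queen} is equivalent to the functional identity \eqref{eq:1DevinTownsend}, i.e.\ $\phi_j(\xi)=\ell_j(\xi)$ for all $\xi$ with $\ell_j$ defined as in \eqref{e:elljn}. So the content of the theorem reduces to the claim that, given ingoing profiles $\phi_i$ ($i\in\mathsf{I}$) from Theorem~\ref{t:E} and outgoing end states $\ell_j^\pm$ satisfying (ii), the function $\ell_j$ coincides with the (unique) profile $\phi_j$ of $\eqref{eq:model}_j$ with those end states \emph{if and only if} \eqref{eq:RATM} holds.

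For the forward direction I would argue as follows. Suppose a non-stationary traveling wave exists. Then (i) and (ii) hold by the discussion above, and $\phi_j=\ell_j$ by Proposition~\ref{p:expl-profiles}. Now write equation \eqref{e:agree} for $\phi_j$: since $\phi_j=\ell_j(\,\cdot\,)$ and $\phi_j(\xi)$ evaluated at $\xi=c_jt$ gives $\ell_j(c_jt)$ after the usual change of variable, we get that the left-hand side of \eqref{eq:RATM} equals $\phi_j'(c_j\xi)$ off the (at most one) singular point. On the other hand, differentiating the identity $\phi_j(\xi)=\ell_j(\xi)$ — equivalently using \eqref{e:squares} — and then rewriting each $\phi_i'$ via its own relation \eqref{e:agree} gives exactly the right-hand side of \eqref{eq:RATM}. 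Hence \eqref{eq:RATM} holds a.e., which is (iii). (One must keep track that the exceptional sets where \eqref{e:agree} or \eqref{e:squares} may fail are finite, by Lemma~\ref{lem:simple} and Lemma~\ref{l:prop-deg}, so the identities hold a.e.; this is the only mildly delicate bookkeeping.)

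For the converse, assume ($\mathcal{T}$). By (ii) and \eqref{eq:ch} the speeds $c_j$ are well-defined and nonzero; by (i), $\ins\ne\emptyset$. Take the profiles $\phi_i$ from Theorem~\ref{t:E} for $i\in\mathsf{I}$ and form $\ell_j$ as in \eqref{e:elljn}. By Lemma~\ref{l:phij}, $\ell_j$ is non-decreasing with $\ell_j(\pm\infty)=\ell_j^\pm$ and $\ell_j^-<\ell_j^+$. It remains to show $\ell_j$ is (a shift of) the genuine profile $\phi_j$ of $\eqref{eq:model}_j$, i.e.\ that $\ell_j$ solves \eqref{eq:Lorenzo4}, equivalently \eqref{e:agree}, for road $j$. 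But \eqref{eq:RATM} says precisely that the candidate identity \eqref{e:agree} for $j$ — with $\phi_j$ replaced by $\ell_j$ — has its right-hand side equal to $\sum_{i\in\ins}\alpha_{i,j}c_{i,j}^2\,\phi_i'(c_i\xi)$, which by \eqref{e:squares}/direct differentiation is exactly $\ell_j'(c_j\xi)$; so $\ell_j$ satisfies the first-order ODE characterizing the profile, a.e., hence everywhere by continuity as in the derivation of \eqref{eq:Lorenzo4}. Thus $\ell_j=\phi_j$ up to shift, so $\phi_j$ satisfies \eqref{eq:1DevinTownsend}, and Proposition~\ref{p:expl-profiles} then yields a non-stationary traveling wave.

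The main obstacle, and where care is needed, is the interplay between the a.e.\ identities and the possible degeneracy: in the degenerate case the profiles $\phi_h$ have a singular derivative at the single point $\omega_h$, and one has to know these points coincide across $\ins\cup\mathsf{J}$ — which is exactly the content of \eqref{eq:minchIh} in Lemma~\ref{l:prop-deg} — so that differentiating \eqref{eq:1DevinTownsend} and dividing by diffusivities is legitimate on a full-measure set, and so that the convention \eqref{e:gammah} for $\gamma_h$ at the degenerate end states is consistent on both sides of \eqref{eq:RATM}. Once this is organized, the rest is a transcription between \eqref{eq:Queen}, \eqref{eq:Tool}, \eqref{eq:1DevinTownsend}, and \eqref{e:agree} using the results already proved.
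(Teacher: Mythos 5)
Your proposal is correct and follows essentially the same route as the paper: reduce via Lemma \ref{l:cj=0}, Proposition \ref{p:Tool} and Proposition \ref{p:expl-profiles} to the identity $\phi_j\equiv\ell_j$, then in the forward direction compare \eqref{e:agree} for $\phi_j$ with the differentiated form \eqref{e:squares} of \eqref{eq:1DevinTownsend}, and in the converse define $\phi_j\doteq\ell_j$, invoke Lemma \ref{l:phij} for monotonicity and the limits, and use \eqref{eq:RATM} to verify \eqref{eq:Lorenzo4} a.e.\ and hence everywhere by continuity. Your explicit appeal to Lemma \ref{l:prop-deg} and \eqref{eq:minchIh} to justify the a.e.\ manipulations at the common singular point is exactly the bookkeeping the paper performs.
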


\begin{proof}
First, assume that problem \eqref{eq:model}-\eqref{eq:Queen} admits a non-stationary traveling wave $\rho$ with profiles $\phi_h$, end states $\ell_h^\pm$ and speeds $c_h$, for $h\in\mathsf{H}$. By Theorem~\ref{t:E} we have that $\ell_h^\pm$ and $c_h$ satisfy \eqref{eq:ch}.
By Proposition~\ref{p:expl-profiles} the profiles $\phi_h$ satisfy \eqref{e:infty}-\eqref{eq:Lorenzo4} and \eqref{eq:1DevinTownsend}.
The end states $\ell_j^\pm$, $j\in\mathsf{J}$, satisfy \eqref{eq:fjpm} by Proposition~\ref{p:Tool}.
Since $\rho$ is non-stationary we are in the scenario given by \eqref{e:cicj}: $\ins\ne\emptyset$ and $f_j(\ell_j^-) \ne f_j(\ell_j^+)$ for all $j \in \mathsf{J}$.
By \eqref{e:agree} with $h = j$ we have
\begin{equation}\label{e:post}
\phi_j'(c_j\xi)
=
\dfrac{g_j\left(\phi_j(c_j\xi)\right) - g_j(\ell_j^-)}{D_j\left(\phi_j(c_j\xi)\right)}
\end{equation}
for $\xi \in \R$ in the non-degenerate case and for $\xi \in\R\setminus\{ \omega\}$ with $\omega$ given by \eqref{eq:minchIh} in the degenerate case.
On the other hand, by differentiating \eqref{eq:1DevinTownsend} and applying \eqref{e:agree} with $h = i$ we deduce
\begin{align}
\phi_j'(\xi) &=
\sum_{i \in \ins} A_{i,j} \, c_{i,j} \, \phi_i'( c_{i,j} \xi)
=
\sum_{i \in \ins} A_{i,j} \, c_{i,j} \,
\dfrac{g_i\left(\phi_i( c_{i,j} \xi)\right) - g_i(\ell_i^-)}{D_i\left(\phi_i( c_{i,j} \xi)\right)}
\label{e:g2}
\end{align}
for $\xi \in \R$ in the non-degenerate case and for $\xi \in\R\setminus\{ \omega\}$ with $\omega$ given by \eqref{eq:minchIh} in the degenerate case.
Identity \eqref{eq:RATM} follows because $\ell_j \equiv \phi_j$ by \eqref{eq:1DevinTownsend} and by comparing \eqref{e:post}, \eqref{e:g2}.

Conversely, assume that condition ($\mathcal{T}$) holds.
We remark that the existence of $\phi_i$, $i \in \mathsf{I}$, is assured by Theorem~\ref{t:E}.
Fix  $j\in\mathsf{J}$.
By defining $\phi_j \doteq \ell_j$ we obtain \eqref{eq:1DevinTownsend}.
We know by assumption that $\ins \ne \emptyset$, $\ell_j^\pm \in [0,1]$ satisfy \eqref{eq:fjpm} and $c_j \ne 0$; we can apply therefore Lemma~\ref{l:phij} and deduce that $\ell_j^-<\ell_j^+$, $\phi_j$ is non-decreasing and satisfies \eqref{e:infty} with $h=j$.
By Proposition \ref{p:expl-profiles}, what remains to prove is that $\phi_j$ satisfies \eqref{eq:Lorenzo4}.
But by \eqref{eq:1DevinTownsend} we deduce \eqref{e:g2} for a.e.\ $\xi \in \R$, because $\phi_1,\ldots,\phi_m$ satisfy \eqref{eq:Lorenzo4} and hence, recalling the extension \eqref{e:gammah}, also \eqref{e:agree}; then by \eqref{eq:RATM} we conclude that $\phi_j$ satisfies \eqref{e:agree} for a.e.\ $\xi \in \R$ and then  \eqref{eq:Lorenzo4} for a.e.\ $\xi \in \R$.
Finally, \eqref{eq:Lorenzo4} holds by the regularity ensured by Theorem \ref{t:E} for the profiles.
\end{proof}

\begin{remark}
Fix $\ell_i^\pm \in [0,1]$, $i \in \mathsf{I}$, so that $\ell_i^- < \ell_i^+$ and \eqref{e:ineq} holds. We know by Proposition~\ref{rem:DTP} that for every $j\in\mathsf{J}$ there exists $(\ell_j^-,\ell_j^+) \in [0,1]^2$, with $\ell_j^- < \ell_j^+$, that satisfies \eqref{eq:fjpm}, but it is not unique. If beside \eqref{eq:fjpm} we impose also \eqref{eq:RATM}, then we may have three possible scenarios: such $(\ell_j^-,\ell_j^+)$ either does not exist, or it exists and is unique, or else it exists but is not unique. We refer to Subsections~\ref{s:fGdC} and \ref{s:fGdL} for further discussion.
\end{remark}


\section{The continuity condition}\label{s:CC}

In this section we discuss the case when solutions to \eqref{eq:model}-\eqref{eq:Queen} are {\em also} required to satisfy the continuity condition \eqref{e:dens_cont}; this makes the analysis much easier because \eqref{e:dens_cont} implies several strong conditions.

First, we provide the main results about traveling waves satisfying condition \eqref{e:dens_cont}. We point out that some of the consequences below have already been pointed out in \cite{Mugnolo-Rault, Below-Thesis, Below1994} in the case that some Kirchhoff conditions replace the conservation of the total flow \eqref{eq:Queen}. In order to emphasize the consequences of the continuity condition \eqref{e:dens_cont}, the first two parts of the following lemma do {\em not} assume that also condition \eqref{eq:Queen} holds.
\begin{lemma}\label{l:+cc}
For any $h \in \mathsf{H}$, let $\rho_h$ be a traveling wave of $\eqref{eq:model}_h$ in the sense of Definition \ref{def:solTWh} and set $\rho \doteq (\rho_1, \ldots, \rho_{m+n})$; then the following holds for every $(i,j) \in \mathsf{I}\times\mathsf{J}$ and $h\in\mathsf{H}$.
\begin{enumerate}
\item[(i)] $\rho$ satisfies \eqref{e:dens_cont} if and only if
\begin{align}\label{e:cont_profiles}
&\phi_j(c_jt) = \phi_i(c_i t) \doteq \Phi(t),&
&t\in\R.
\end{align}
\item[(ii)] If $\rho$ satisfies \eqref{e:dens_cont}, then either it is stationary (hence \eqref{e:cont_profiles} reduces to $\phi_j(0) = \phi_i(0)$), or it is completely non-stationary and the speeds $c_h$ have the same sign (hence $c_{i,j}>0$).
In the latter case, $\rho$ is either non-degenerate or completely degenerate; moreover
\begin{align}
\label{eq:cI}
(c_j^{-1}I_j) &= (c_i^{-1}I_i) \doteq \mathcal{I},
\\
\label{eq:Beardfish}
\ell_j^\pm&=\ell_i^\pm=L_{i,j}^\pm \doteq \ell^\pm,
\\
\label{e:gjgi}
c_j\frac{g_j(\ell)-g_j(\ell^\pm)}{D_j(\ell)} &= c_i\frac{g_i(\ell)-g_i(\ell^\pm)}{D_i(\ell)},& &\ell\in(\ell^-,\ell^+).
\end{align}
\item[(iii)]
If $\rho$ is non-stationary and satisfies both \eqref{eq:Queen} and \eqref{e:dens_cont}, then
\begin{align}\label{e:4eqn}
&c_j =\sum_{i\in\mathsf{I}} \alpha_{i,j}c_i,&
&\sum_{j\in\mathsf{J}}c_j =\sum_{i\in\mathsf{I}}c_i,&
&\kappa_j=0,&
&\sum_{i \in \mathsf{I}} A_{i,j} = 1.
\end{align}
\end{enumerate}
\end{lemma}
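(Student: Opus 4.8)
\textbf{Proof plan for Lemma~\ref{l:+cc}.}
The plan is to prove the three items in order, using the characterization of profiles for a single equation from Theorem~\ref{t:E}, the reformulations of the node condition from Propositions~\ref{p:main}, \ref{p:Tool} and \ref{p:expl-profiles}, and the sign analysis in Lemmas~\ref{l:cj=0} and \ref{l:prop-deg}. For item~(i), I would simply spell out \eqref{e:dens_cont}: since $\rho_h(t,x)=\phi_h(x-c_ht)$, evaluating at $x=0^\mp$ and using the continuity of the profiles on $\R$ (Theorem~\ref{t:E}) gives $\rho_i(t,0^-)=\phi_i(-c_it)$ and $\rho_j(t,0^+)=\phi_j(-c_jt)$; thus \eqref{e:dens_cont} reads $\phi_i(-c_it)=\phi_j(-c_jt)$ for all $t\in\R$, and replacing $t$ by $-t$ yields \eqref{e:cont_profiles}. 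This is essentially a change of variables and should be a couple of lines.

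For item~(ii), assume \eqref{e:cont_profiles}. First, the dichotomy: by Lemma~\ref{l:cj=0} a traveling wave is either stationary ($c_h=0$ for all $h$) or completely non-stationary in the sense of \eqref{e:cicj}; in the stationary case \eqref{e:cont_profiles} degenerates to $\phi_j(0)=\phi_i(0)$ since $c_h=0$. So suppose $\rho$ is non-stationary, hence $c_i\neq 0$ for every $i\in\mathsf I$ (otherwise $\is\neq\emptyset$ forces, via \eqref{e:cont_profiles}, a constant profile) and $c_j\neq 0$ for all $j$. Letting $t\to\pm\infty$ in \eqref{e:cont_profiles} and using $\phi_h(\pm\infty)=\ell_h^\pm$ together with monotonicity of the profiles (Theorem~\ref{t:E}): if $c_i$ and $c_j$ had opposite signs, then as $t\to+\infty$ one side tends to $\ell^+$ and the other to $\ell^-$, forcing $\ell^-=\ell^+$, a contradiction. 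Hence all $c_h$ share one sign, so $c_{i,j}>0$, which gives $L_{i,j}^\pm=\ell_i^\pm$ by \eqref{e:Lij}, and then $\ell_j^\pm=\ell_i^\pm$ from the limits, i.e.\ \eqref{eq:Beardfish}. Next \eqref{eq:cI}: from \eqref{e:cont_profiles}, $\phi_j(c_j\xi)\in(\ell^-,\ell^+)$ iff $\phi_i(c_i\xi)\in(\ell^-,\ell^+)$, i.e.\ $c_j\xi\in I_j$ iff $c_i\xi\in I_i$, which says $c_j^{-1}I_j=c_i^{-1}I_i$. The statement that $\rho$ is non-degenerate or completely degenerate follows from Lemma~\ref{l:prop-deg}(B) (here $\is=\emptyset$, $\ins=\mathsf I$, and \eqref{eq:minchIh} plus $\ell_h^\pm$ all equal forces the same endpoint $0$ or $1$ to degenerate simultaneously); alternatively, differentiate \eqref{e:cont_profiles} to get $c_j\phi_j'(c_j\xi)=c_i\phi_i'(c_i\xi)$ a.e., so the singular sets coincide. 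Finally \eqref{e:gjgi}: plug \eqref{eq:Beardfish} and $\ell_j(c_j\xi)=\phi_j(c_j\xi)=\phi_i(c_i\xi)$ into \eqref{e:agree} for $h=j$ and $h=i$ and use $c_j\phi_j'(c_j\xi)=c_i\phi_i'(c_i\xi)$; setting $\ell=\phi_i(c_i\xi)$ and letting $\xi$ range over $\mathcal I$ covers all $\ell\in(\ell^-,\ell^+)$.

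For item~(iii), now impose \eqref{eq:Queen}; in the non-stationary case I would read off \eqref{e:4eqn} from the already-established identities. From \eqref{eq:fjpm} with $L_{i,j}^\pm=\ell_i^\pm$ (signs aligned by (ii)): $f_j(\ell_j^+)-f_j(\ell_j^-)=\sum_i\alpha_{i,j}[f_i(\ell_i^+)-f_i(\ell_i^-)]$, and dividing by $\ell^+-\ell^-=\ell_h^+-\ell_h^-$ (common by (ii)) and using \eqref{eq:ch} gives $c_j=\sum_{i\in\mathsf I}\alpha_{i,j}c_i$; summing over $j\in\mathsf J$ and invoking \eqref{e:1rhojrhoi} ($\sum_j\alpha_{i,j}=1$) yields $\sum_j c_j=\sum_i c_i$. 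For $\sum_i A_{i,j}=1$: divide $c_j=\sum_i\alpha_{i,j}c_i$ by $c_j$ to get $1=\sum_i\alpha_{i,j}c_{i,j}=\sum_i A_{i,j}$ by \eqref{eq:kappaj}. For $\kappa_j=0$: since all $\ell_h^\pm$ coincide with $\ell^\pm$ and $\sum_i A_{i,j}=1$, formula \eqref{eq:kappaj} gives $k_j=\sum_{i\in\ins}A_{i,j}\ell^\pm-\ell^\pm=(\sum_i A_{i,j}-1)\ell^\pm=0$ (here $\ins=\mathsf I$), hence $\kappa_j=c_jk_j=0$.

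I do not expect a serious obstacle; the main care points are the limit-sign argument in (ii) ruling out $c_{i,j}<0$, and being consistent about which endpoint ($\ell^-$ or $\ell^+$) is used when passing to \eqref{e:gjgi} and checking that $\xi$ ranging over $\mathcal I$ indeed sweeps all of $(\ell^-,\ell^+)$ by strict monotonicity of $\phi_i$. The rest is bookkeeping with the definitions \eqref{e:Lij}, \eqref{eq:kappaj} and the already-proven propositions.
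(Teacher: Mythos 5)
Your proposal is correct and follows essentially the same route as the paper's proof: item (i) is the change of variables $\xi=x-c_ht$ at $x=0$, item (ii) rests on differentiating \eqref{e:cont_profiles} and passing to the limits $t\to\pm\infty$ (the paper gets the common sign of the $c_h$ from $c_j\phi_j'(c_jt)=c_i\phi_i'(c_it)$ on $\mathcal I$ rather than from your limit argument, but both work), and item (iii) is the same bookkeeping with \eqref{eq:fjpm}, \eqref{e:1rhojrhoi} and \eqref{eq:kappaj}. One small caution: item (ii) does \emph{not} assume \eqref{eq:Queen}, so you cannot invoke Lemma~\ref{l:cj=0} or Lemma~\ref{l:prop-deg} there as you first suggest; fortunately you also supply the direct arguments (non-constancy of the profiles forces all $c_h$ to vanish or none, and the a.e.\ identity $c_j\phi_j'(c_jt)=c_i\phi_i'(c_it)$ makes the singular sets coincide), which are exactly what the paper uses.
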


\begin{proof}
We split the proof according to the items in the statement.

\smallskip
\noindent

\begin{enumerate}[{\em (i)}]

\item Condition \eqref{e:dens_cont} and \eqref{e:cont_profiles} are clearly equivalent.

\item Since we are discarding constant profiles, by \eqref{e:cont_profiles} we have that either $c_h=0$ for all $h\in\mathsf{H}$ or $c_h\ne0$ for all $h\in\mathsf{H}$. The stationary case is trivial; therefore we consider below only the non-stationary case and assume that $c_h\ne0$ for all $h\in\mathsf{H}$.
By differentiating \eqref{e:cont_profiles} with respect to $t$ we deduce
\begin{align}\label{e:cont_diff}
&c_j\phi_j'(c_jt) = c_i\phi_i'(c_i t)&&\hbox{ for a.e. }t\in\R.
\end{align}
Then \eqref{e:cont_diff} implies that either $\rho$ is non-degenerate or it is completely degenerate.
Moreover \eqref{e:cont_diff} implies \eqref{eq:cI} because, by Lemma \ref{lem:simple}, we have that $\rho_h$ is degenerate if and only if the map $\xi \mapsto \phi_h'(c_h \xi)$ is singular at $\xi = \omega_h \in \R$ and $\C1$ elsewhere.
By taking $t \in \mathcal{I}$ in \eqref{e:cont_diff} we deduce that $c_i$ and $c_j$ have the same sign. As a consequence we have $L_{i,j}^\pm=\ell_i^\pm$ and then $\ell_i^\pm = \ell_j^\pm$ by letting $t\to\pm\infty$ in \eqref{e:cont_profiles}.
By \eqref{e:cont_profiles}, \eqref{eq:Lorenzo4} and \eqref{eq:Beardfish} we have $D_h\left(\Phi(\xi)\right) \Phi'(\xi) = c_h \left[g_h(\Phi(\xi)) - g_h(\ell^\pm)\right]$ for all $h\in \mathsf{H}$, whence \eqref{e:gjgi} by the extension \eqref{e:gammah}.

\item To deduce $\eqref{e:4eqn}_1$, we differentiate \eqref{eq:Tool} and then exploit \eqref{e:cont_diff}. Formula $\eqref{e:4eqn}_2$ follows by summing $\eqref{e:4eqn}_1$ with respect to $j$ and by \eqref{e:1rhojrhoi}.
By $\eqref{e:4eqn}_1$ we have $\sum_{i \in \mathsf{I}} A_{i,j} = \sum_{i \in \mathsf{I}} \alpha_{i,j} c_i c_j^{-1}=1$, which proves $\eqref{e:4eqn}_4$.
Finally, \eqref{eq:Beardfish} and $\eqref{e:4eqn}_4$ imply $\eqref{e:4eqn}_3$.\qedhere
\end{enumerate}
\end{proof}

In the following proposition we deal with stationary traveling waves satisfying condition \eqref{e:dens_cont}.

\begin{proposition}\label{rem:SC}
Problem \eqref{eq:model}-\eqref{eq:Queen} admits infinitely many stationary traveling waves satisfying \eqref{e:dens_cont}; their end states $\ell_h^\pm$ satisfy \eqref{basic} and are such that $S \doteq \bigcap_{h\in\mathsf{H}}(\ell_h^-,\ell_h^+)\ne\emptyset$.
\end{proposition}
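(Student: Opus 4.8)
The plan is to build the stationary waves explicitly so that their profiles all share a common value at the node, which is what makes the continuity condition \eqref{e:dens_cont} hold automatically in the stationary case. Recall from Lemma \ref{l:+cc}{\em (i)}--{\em (ii)} that for a stationary wave condition \eqref{e:dens_cont} amounts just to $\phi_j(0)=\phi_i(0)$ for all $(i,j)\in\mathsf{I}\times\mathsf{J}$; since stationary profiles are solutions of \eqref{eq:phi} with $c_h=0$ and are determined only up to a shift, the strategy is to first pick end states $\ell_h^\pm$ satisfying \eqref{basic} with the extra property that $S=\bigcap_{h\in\mathsf{H}}(\ell_h^-,\ell_h^+)\ne\emptyset$, then to fix a value $\sigma\in S$ and shift each profile $\phi_h$ so that $\phi_h(0)=\sigma$. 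This is possible precisely because $\sigma\in(\ell_h^-,\ell_h^+)=I_h$ and, by Theorem \ref{t:E}, $\phi_h$ is a continuous increasing bijection from $I_h$ onto $(\ell_h^-,\ell_h^+)$, so there is a unique shift placing the value $\sigma$ at $\xi=0$.

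The first step, then, is the choice of end states. Here I would reuse verbatim the construction in the proof of Theorem \ref{t:1}: choose $\ell_i^\pm\in[0,1]$ with $\ell_i^-<\ell_i^+$ and $f_i(\ell_i^-)=f_i(\ell_i^+)$ small enough that the first (strict) inequality in \eqref{e:ineq} holds for every $j\in\mathsf{J}$, and then, by a continuity argument on the concave functions $f_j$, pick $\ell_j^\pm\in[0,1]$ with $\ell_j^-<\ell_j^+$ and $f_j(\ell_j^-)=f_j(\ell_j^+)=\sum_{i\in\mathsf{I}}\alpha_{i,j}f_i(\ell_i^-)$. This yields \eqref{basic}. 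The additional requirement $S\ne\emptyset$ is obtained by taking the ingoing flux levels small and comparable: if $f_i(\ell_i^\pm)$ is close to $0$ for all $i$, then each interval $(\ell_i^-,\ell_i^+)$ is a neighborhood of the unique interior maximum point of $f_i$, and likewise $(\ell_j^-,\ell_j^+)$ is a neighborhood of the maximum point of $f_j$; more simply, one can argue that for flux levels small enough all the intervals $(\ell_h^-,\ell_h^+)$ contain, say, the point(s) where each $f_h$ attains its maximum — or, with a cleaner bookkeeping, first fix a point $\sigma$, e.g.\ one can arrange $\ell_h^-<\sigma<\ell_h^+$ for every $h$ by choosing the (common, small) flux value below $\min_h f_h(\sigma)$ for a suitable $\sigma$, e.g.\ $\sigma$ in the interior of $[0,1]$ — thereby guaranteeing $\sigma\in S$.

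The second step is to invoke Theorem \ref{t:E} with $c_h=0$ (note $c_h=0$ is forced by $f_h(\ell_h^-)=f_h(\ell_h^+)$ via \eqref{eq:ch}), which gives, for each $h\in\mathsf{H}$, a stationary traveling-wave solution $\rho_h$ of $\eqref{eq:model}_h$ with profile $\phi_h$ satisfying \eqref{e:infty}, unique up to shifts, strictly increasing on $I_h=(\ell_h^-,\ell_h^+)$. Shift each $\phi_h$ so that $\phi_h(0)=\sigma$; this uses $\sigma\in S$ exactly. Then $\phi_j(0)=\sigma=\phi_i(0)$ for all $i,j$, so by Lemma \ref{l:+cc}{\em (i)} the continuity condition \eqref{e:dens_cont} holds. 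It remains to check that \eqref{eq:Queen} holds: as already observed in the proof of Theorem \ref{t:1}, in the stationary case condition \eqref{eq:Tool} reduces to the second equation in \eqref{basic}, which we have arranged; and by Proposition \ref{p:main} (or directly from the computation $F_h(\phi_h,\phi_h')=g_h(\ell_h^\pm)=f_h(\ell_h^\pm)$, using \eqref{e:Fhpre} with $c_h=0$ and \eqref{eq:gh}) this is exactly \eqref{eq:Queen}. Hence $\rho=(\rho_1,\dots,\rho_{m+n})$ is a stationary traveling wave of \eqref{eq:model}-\eqref{eq:Queen} satisfying \eqref{e:dens_cont}. Finally, since the common value $\sigma\in S$ can be chosen in a nondegenerate interval (and, independently, the flux level itself can be varied in a whole interval of small values), there are infinitely many such waves.

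The main obstacle is the bookkeeping in the first step: one must simultaneously satisfy the node relation $f_j(\ell_j^-)=f_j(\ell_j^+)=\sum_i\alpha_{i,j}f_i(\ell_i^-)$, the strict inequalities $\ell_h^-<\ell_h^+$, and the nonempty-intersection condition $S\ne\emptyset$. The cleanest way around it, which I would adopt, is to reverse the order of choices: fix $\sigma\in(0,1)$ first, set $\mu\doteq\min_{h\in\mathsf{H}}f_h(\sigma)>0$ by (f), pick a flux level $v\in(0,\mu/K)$ where $K\doteq\max\{1,\max_{j}\sum_i\alpha_{i,j}\}$ (so that both $v$ and $\sum_i\alpha_{i,j}v=v$, using \eqref{e:1rhojrhoi}, lie below $\min_h f_h(\sigma)$ and below $\max_{[0,1]}f_j$), and then for each $h$ let $\ell_h^-<\ell_h^+$ be the two solutions of $f_h(\ell)=v$, which by strict concavity satisfy $\ell_h^-<\sigma<\ell_h^+$, giving at once \eqref{basic}, the strict inequalities, and $\sigma\in S$. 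Everything else is routine application of Theorem \ref{t:E} and Lemma \ref{l:+cc}.
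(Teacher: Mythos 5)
Your proof is correct and follows essentially the same route as the paper: reduce \eqref{e:dens_cont} in the stationary case to $\phi_i(0)=\phi_j(0)$, choose end states satisfying \eqref{basic} with flux levels small enough that $S\ne\emptyset$, fix $\sigma\in S$ and normalize every profile so that $\phi_h(0)=\sigma$. One trivial slip in your final paragraph: $\sum_{i}\alpha_{i,j}v\ne v$ in general, since \eqref{e:1rhojrhoi} normalizes the sum over $j$, not over $i$ (and correspondingly $\ell_j^\pm$ should solve $f_j(\ell)=\sum_i\alpha_{i,j}v$ rather than $f_j(\ell)=v$), but your factor $K$ already absorbs this, so the argument stands.
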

\begin{proof}
By \eqref{e:cont_profiles} condition \eqref{e:dens_cont} holds in the stationary case if and only if $\phi_i(0)=\phi_j(0)$ for $(i,j)\in\mathsf{I}\times\mathsf{J}$.
Recalling the proof of Theorem \ref{t:1}, it is sufficient to take $\ell_h^\pm \in [0,1]$ satisfying \eqref{basic} and such that $S \ne \emptyset$, $\ell_0 \in S$ and the unique solution $\phi_h$ to \eqref{e:infty}-\eqref{eq:Lorenzo4} such that $\phi_h(0) = \ell_0$. There are infinitely many of such profiles because of the arbitrariness of $\ell_h^\pm$.
\end{proof}

We point out that condition $S=\emptyset$ can occur if the functions $f_h$ assume their maximum values at different points. This is not the case when the following condition $\eqref{e:proportionality}_1$ is assumed.

The following result is analogous to Theorem \ref{thm:1} in the case \eqref{e:dens_cont} holds.

\begin{theorem}\label{thm:2}
Assume conditions {\rm (f)} and {\rm (D)}. Problem \eqref{eq:model}-\eqref{eq:Queen} admits a (completely) non-stationary  traveling wave satisfying \eqref{e:dens_cont} if and only if the following condition holds.
\begin{itemize}
\item[($\mathcal{T}_c$)]  There exist $\ell^\pm\in [0,1]$ with $\ell^- < \ell^+$, such that for any $h\in\mathsf{H}$, $i \in \mathsf{I}$ and $j \in \mathsf{J}$
\begin{align}\label{Tc1}
f_h(\ell^-) &\ne f_h(\ell^+),
\\
\label{eq:fjicontinuo}
f_j(\ell^\pm)&=\sum_{i \in \mathsf{I}}\alpha_{i,j}f_i(\ell^\pm),
\\
\label{Tc4}
c_j \dfrac{g_j\left(\phi_j(c_j t)\right) - g_j(\ell^-)}{D_j\left(\phi_j(c_j t)\right)}
&=
c_i \dfrac{g_i\left(\phi_i(c_i t)\right) - g_i(\ell^-)}{D_i\left(\phi_i(c_i t)\right)}&
&\hbox{for a.e.\ }t\in\R,
\end{align}
where $c_h$ is given by \eqref{eq:ch}, $\phi_h$ is a solution to \eqref{eq:Lorenzo4} such that $\varphi_h(\pm\infty) = \ell^\pm$ and $\varphi_1(0) = \ldots = \varphi_{m+n}(0)$.
\end{itemize}
\end{theorem}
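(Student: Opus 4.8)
The plan is to mirror the proof of Theorem~\ref{thm:1}, but now carrying along the extra structural constraints imposed by the continuity condition, which were collected in Lemma~\ref{l:+cc}. Thus the proof splits into the two implications.

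\emph{Necessity.} Suppose $\rho$ is a completely non-stationary traveling wave of \eqref{eq:model}-\eqref{eq:Queen} satisfying \eqref{e:dens_cont}. By Lemma~\ref{l:+cc}{\em (ii)} it is completely non-stationary and all speeds $c_h$ share the same sign; moreover \eqref{eq:Beardfish} gives $\ell_h^\pm = \ell^\pm$ for every $h\in\mathsf{H}$, with $\ell^-<\ell^+$, so \eqref{Tc1} is just the statement $c_h\ne0$, i.e.\ $f_h(\ell^-)\ne f_h(\ell^+)$, which holds by \eqref{e:cicj}. Since all end states coincide, Proposition~\ref{p:Tool} (formula \eqref{eq:fjpm}) with $L_{i,j}^\pm=\ell^\pm$ reduces to \eqref{eq:fjicontinuo}. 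Finally, \eqref{e:gjgi} from Lemma~\ref{l:+cc}{\em (ii)}, evaluated at $\ell=\phi_j(c_jt)=\phi_i(c_it)=\Phi(t)$ (using \eqref{e:cont_profiles}), is exactly \eqref{Tc4}; here I must note that the identity of Lemma~\ref{l:+cc}{\em (ii)} is stated for $\ell\in(\ell^-,\ell^+)$, while \eqref{Tc4} is an a.e.\ statement in $t$, and the range of $\Phi$ is $[\ell^-,\ell^+]$, so the passage is legitimate (the extension \eqref{e:gammah} absorbs the endpoints). One also checks that $\varphi_h(0)$ is common by \eqref{e:cont_profiles} with $t=0$. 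This gives condition $(\mathcal{T}_c)$.

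\emph{Sufficiency.} Conversely, assume $(\mathcal{T}_c)$. Define $c_h$ by \eqref{eq:ch}; by \eqref{Tc1} we have $c_h\ne0$ for all $h$, and by \eqref{eq:fjicontinuo} together with (f) all $c_h$ have the same sign (this is the small computation: from $f_j(\ell^+)-f_j(\ell^-)=\sum_i\alpha_{i,j}\,(f_i(\ell^+)-f_i(\ell^-))$ and $\sum_j\alpha_{i,j}=1$, the signs cannot differ). Take the profiles $\phi_h$ provided by Theorem~\ref{t:E} for the common end states $\ell^\pm$, normalized so that $\varphi_1(0)=\dots=\varphi_{m+n}(0)=:\ell_0\in(\ell^-,\ell^+)$, which is possible since solutions of \eqref{eq:Lorenzo4} are determined up to shift and $\phi_h$ is strictly increasing on $I_h$. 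The content to verify is: (a) \eqref{e:dens_cont}, i.e.\ $\phi_j(c_jt)=\phi_i(c_it)$ for all $t$; and (b) the flux condition \eqref{eq:Queen}, equivalently \eqref{eq:Tool}. For (a), set $\Psi_h(t)\doteq\phi_h(c_ht)$; by \eqref{eq:Lorenzo4} we have $\Psi_h'(t)=c_h\,\gamma_h(\Psi_h(t))$ wherever defined, so \eqref{Tc4} says precisely that all $\Psi_h$ solve the \emph{same} autonomous ODE $y'=c_i\gamma_i(y)$ (after noting $c_j\gamma_j(y)=c_i\gamma_i(y)$ as functions of $y$, which is \eqref{Tc4} read back through $y=\Psi_j(t)=\Psi_i(t)$ — this requires a brief uniqueness-of-level argument, since $\gamma_h$ may vanish at the endpoints). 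Since $\Psi_h(0)=\ell_0$ for all $h$ and the right-hand side is such that the solution through an interior point is unique, $\Psi_h\equiv\Psi$, giving (a). For (b): with $L_{i,j}^\pm=\ell^\pm$ and \eqref{eq:fjicontinuo} one has $\kappa_j=g_j(\ell^\pm)-\sum_i\alpha_{i,j}g_i(\ell^\pm)=0$ by \eqref{eq:gh}, and \eqref{e:4eqn} then makes \eqref{eq:1DevinTownsend} collapse to $\phi_j=\Psi=\phi_i$, which is true by (a); hence \eqref{eq:Tool} holds and, by Proposition~\ref{p:main}, $\rho$ is a traveling wave.

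The main obstacle will be step~(a) of sufficiency: extracting $\phi_j(c_jt)=\phi_i(c_it)$ from \eqref{Tc4}, because \eqref{Tc4} is only an a.e.\ pointwise identity along the (a priori possibly different) profiles, and the functions $\gamma_h$ can degenerate at $0$ or $1$, so one cannot blindly invoke Picard–Lindelöf. The clean way is to argue on the interior: on any interval where $\Psi_i$ takes values in the open $(\ell^-,\ell^+)$, $\gamma_i$ is continuous and positive, so $\Psi_i$ is a strictly monotone $\C1$ diffeomorphism onto its image; \eqref{Tc4} then forces $c_j\gamma_j\equiv c_i\gamma_i$ on $(\ell^-,\ell^+)$, and since $\Psi_i,\Psi_j$ both solve $y'=c_i\gamma_i(y)$ with the same interior datum $\ell_0$ at $t=0$, standard one-dimensional autonomous-ODE uniqueness (separation of variables, the integral $\int^{y}\!\mathrm{d}s/\gamma_i(s)$ being strictly monotone near any interior point) yields $\Psi_i=\Psi_j$ on the maximal interval where both are interior; the degenerate boundary pieces (the flat parts $\phi_h\equiv\ell^\pm$ on $(-\infty,\nu_h^-]$ or $[\nu_h^+,\infty)$) are then matched using Lemma~\ref{lem:simple}, exactly as the equality \eqref{eq:cI} was obtained in Lemma~\ref{l:+cc}{\em (ii)}. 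Everything else is bookkeeping with the already-established Propositions~\ref{p:main}, \ref{p:Tool}, \ref{p:expl-profiles} and Lemma~\ref{l:+cc}.
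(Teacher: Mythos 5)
Your overall architecture matches the paper's: necessity is read off from Lemma~\ref{l:+cc} and Proposition~\ref{p:Tool} (the paper dismisses this direction as obvious; your expanded version is correct), and sufficiency is reduced to proving \eqref{e:cont_profiles} and \eqref{eq:1DevinTownsend} for the profiles of Theorem~\ref{t:E} normalized by $\varphi_h(0)=\ell_0$. The bookkeeping ($\kappa_j=0$, $\sum_{i}A_{i,j}=1$, the collapse of \eqref{eq:1DevinTownsend} once \eqref{e:cont_profiles} is known) is all as in the paper.

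The gap is in your step (a). The identity you want to extract from \eqref{Tc4}, namely $c_j\gamma_j\equiv c_i\gamma_i$ as functions on $(\ell^-,\ell^+)$, does not follow from \eqref{Tc4} as stated: there the left-hand side is evaluated at $\phi_j(c_jt)$ and the right-hand side at $\phi_i(c_it)$, two values you do not yet know to coincide. Substituting $y=\Psi_i(t)$ only yields $c_j\gamma_j\bigl(\Psi_j(\Psi_i^{-1}(y))\bigr)=c_i\gamma_i(y)$, which involves the unknown reparametrization $\Psi_j\circ\Psi_i^{-1}$; your parenthetical ``read back through $y=\Psi_j(t)=\Psi_i(t)$'' presupposes the conclusion. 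Moreover the whole detour through autonomous-ODE uniqueness is unnecessary. The correct (and the paper's) argument is direct: by \eqref{eq:Lorenzo4} together with the extension \eqref{e:gammah}, the two sides of \eqref{Tc4} are exactly $\frac{{\rm d}}{{\rm d}t}\phi_j(c_jt)$ and $\frac{{\rm d}}{{\rm d}t}\phi_i(c_it)$, so \eqref{Tc4} says that $t\mapsto\phi_j(c_jt)-\phi_i(c_it)$ has vanishing derivative a.e.; this function is absolutely continuous on compact sets (each profile is continuous, monotone, and $\C1$ off at most one point) and vanishes at $t=0$, hence it vanishes identically. No uniqueness theory for degenerate ODEs, and no separate matching of the flat boundary pieces, is needed. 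A second, minor, inaccuracy: your claim that \eqref{eq:fjicontinuo} alone forces all the $c_h$ to share a sign is unfounded, since $c_j=\sum_{i}\alpha_{i,j}c_i$ does not preclude mixed signs among the $c_i$; the common sign follows from \eqref{Tc4} (the quantities $c_h\gamma_h$ have the sign of $c_h$ where nonzero), or a posteriori from Lemma~\ref{l:+cc}, but nothing in the rest of your argument actually uses it.
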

\begin{proof}
Assume that condition ($\mathcal{T}_c$) holds; the other implication is obvious.
We remark that the existence of $\phi_1,\ldots,\phi_{m+n}$ is assured by Theorem \ref{t:E}; indeed, for any $\ell_0 \in (\ell^-,\ell^+)$, up to shifts it is always possible to assume that $\varphi_h(0) = \ell_0$, $h \in \mathsf{H}$.
By \eqref{eq:fjicontinuo} we have $\eqref{e:4eqn}_4$ because
\[
\sum_{i \in \mathsf{I}} A_{i,j} = \sum_{i \in \mathsf{I}} \alpha_{i,j} \frac{f_i(\ell^+) - f_i(\ell^-)}{f_j(\ell^+) - f_j(\ell^-)} = 1.
\]
By \eqref{Tc1} we have that $\ins = \mathsf{I}$, $\is = \emptyset$ and $\rho$ corresponding to the profile $\varphi \doteq (\varphi_1, \ldots, \varphi_{m+n})$ is completely non-stationary.
Then $\eqref{e:4eqn}_4$ and \eqref{eq:kappajbis} imply $\eqref{e:4eqn}_3$, namely $k_j = 0$.
By Lemma \ref{l:+cc} \emph{(i)} and Proposition \ref{p:expl-profiles} it remains to prove \eqref{e:cont_profiles} and \eqref{eq:1DevinTownsend}.
We start with \eqref{e:cont_profiles}.
Clearly \eqref{e:cont_profiles} holds for $t=0$ because $\phi_h(0) = \ell_0$, $h\in\mathsf{H}$.
Then by the extension \eqref{e:gammah} and \eqref{Tc4} we have
\begin{align*}
\frac{{\rm d}}{{\rm d} t} \left[ \phi_j(c_jt) - \phi_i(c_i t) \right]
=
c_j \dfrac{g_j\left(\phi_j(c_j t)\right) - g_j(\ell^-)}{D_j\left(\phi_j(c_j t)\right)}
-
c_i \dfrac{g_i\left(\phi_i(c_i t)\right) - g_i(\ell^-)}{D_i\left(\phi_i(c_i t)\right)}
= 0.
\end{align*}
Therefore we conclude that \eqref{e:cont_profiles} holds.
Finally, \eqref{eq:1DevinTownsend} follows immediately from \eqref{e:cont_profiles}, $\eqref{e:4eqn}_3$ and $\eqref{e:4eqn}_4$.
\end{proof}

Consider in particular the case when the functions $f$ and $D$ satisfy {\rm (f)} and {\rm (D)}, respectively, and assume that
\begin{align}\label{e:proportionality}
&f_h(\ell) \doteq v_h f(\ell),&
&D_h(\ell) \doteq \delta_h D(\ell),&
&\ell \in [0,1],
\end{align}
for some constants $v_h,\delta_h>0$. Denote
\begin{align}\label{eq:vdelta}
&v_{i,j} \doteq \frac{v_i}{v_j},&
&\delta_{i,j} \doteq \frac{\delta_i}{\delta_j}.
\end{align}
We notice that now we have
\begin{equation}\label{eq:v=c}
v_{i,j}=c_{i,j}.
\end{equation}
In the following proposition we apply Theorem \ref{thm:2} when \eqref{e:proportionality} is assumed; in this case conditions \eqref{eq:fjicontinuo} and \eqref{Tc4} no longer depend on the end states and the statement is somewhat simplified.

\begin{proposition}\label{p:super}
Assume \eqref{e:proportionality} with $f$ and $D$ satisfying {\rm (f)} and {\rm (D)}, respectively. Problem \eqref{eq:model}-\eqref{eq:Queen} admits a (completely) non-stationary traveling wave satisfying \eqref{e:dens_cont} if and only if for every $i\in\mathsf{I}$ and $j\in\mathsf{J}$ we have
\begin{align}\label{e:Mugnolo}
&v_{i,j}^2=\delta_{i,j} &\hbox{ and }&
&\sum_{i \in \mathsf{I}} \alpha_{i,j} v_{i,j} = 1.
\end{align}
\end{proposition}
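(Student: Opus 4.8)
The plan is to apply Theorem~\ref{thm:2}: under the proportionality assumption \eqref{e:proportionality}, condition $(\mathcal{T}_c)$ must be translated into \eqref{e:Mugnolo}. First I would address the algebraic condition \eqref{eq:fjicontinuo}: since $f_h(\ell)=v_hf(\ell)$, equation \eqref{eq:fjicontinuo} reads $v_jf(\ell^\pm)=\sum_{i\in\mathsf{I}}\alpha_{i,j}v_if(\ell^\pm)$, which — provided $f(\ell^\pm)$ is not forced to vanish — is equivalent to $v_j=\sum_{i\in\mathsf{I}}\alpha_{i,j}v_i$, i.e. $\sum_{i\in\mathsf{I}}\alpha_{i,j}v_{i,j}=1$; this is $\eqref{e:Mugnolo}_2$ and coincides with $\eqref{e:4eqn}_4$ via \eqref{eq:v=c}. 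One has to check that we can indeed choose $\ell^\pm$ with $f(\ell^-)\neq f(\ell^+)$, which is condition \eqref{Tc1}; by (f) there are plenty of such pairs, so \eqref{Tc1} is automatically satisfiable and drops out as a genuine constraint only through $f(\ell^\pm)\ne 0$ at one endpoint (which can always be arranged).

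Next I would turn to the differential condition \eqref{Tc4}. Writing $g_h(\ell)=f_h(\ell)-c_h\ell=v_hf(\ell)-c_h\ell$ and using \eqref{e:proportionality}, the quantity $c_h\frac{g_h(\phi_h)-g_h(\ell^-)}{D_h(\phi_h)}$ becomes $\frac{c_h}{\delta_h}\cdot\frac{v_hf(\phi_h)-c_h\phi_h-v_hf(\ell^-)+c_h\ell^-}{D(\phi_h)}$. Here the key simplification is that, by Lemma~\ref{l:+cc}\emph{(ii)} (which applies since a continuous non-stationary wave is completely non-stationary), all profiles coincide: $\phi_h(c_ht)=\Phi(t)$ and $\ell^\pm$ is common. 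Hence \eqref{Tc4} evaluated along the common profile $\Phi$ reduces, after cancelling the common factor $1/D(\Phi(t))$, to the identity of the two affine-in-$\Phi$ expressions $\frac{c_j}{\delta_j}\bigl[v_jf(\Phi)-c_j\Phi-v_jf(\ell^-)+c_j\ell^-\bigr]=\frac{c_i}{\delta_i}\bigl[v_if(\Phi)-c_i\Phi-v_if(\ell^-)+c_i\ell^-\bigr]$ for a.e.\ $t$, hence for all $\Phi$ in the open interval $(\ell^-,\ell^+)$ (the range of $\Phi$), because $f$ is not affine there by strict concavity. Matching the coefficient of $f(\Phi)$ gives $\frac{c_jv_j}{\delta_j}=\frac{c_iv_i}{\delta_i}$, and matching the coefficient of $\Phi$ gives $\frac{c_j^2}{\delta_j}=\frac{c_i^2}{\delta_i}$; matching constants is then automatic. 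Using \eqref{eq:v=c}, i.e.\ $c_{i,j}=v_{i,j}$ (a consequence of \eqref{eq:ch} and $f_h=v_hf$), the second relation is exactly $v_{i,j}^2=\delta_{i,j}$, which is $\eqref{e:Mugnolo}_1$; and the first relation, $c_jv_j\delta_i=c_iv_i\delta_j$, combined with $c_{i,j}=v_{i,j}$ and $v_{i,j}^2=\delta_{i,j}$, is seen to be equivalent to it, so no new condition appears.

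For the converse, assuming \eqref{e:Mugnolo} I would exhibit a wave: pick any $\ell^\pm\in[0,1]$ with $\ell^-<\ell^+$ and $f(\ell^-)\ne f(\ell^+)$ (possible by (f)), set $c_h$ by \eqref{eq:ch}, note $c_{i,j}=v_{i,j}$ by \eqref{eq:v=c} so that $\eqref{e:Mugnolo}_2$ yields $\sum_i A_{i,j}=\sum_i\alpha_{i,j}v_{i,j}=1$ and hence $\eqref{e:4eqn}_4$ and, via \eqref{eq:kappajbis}, $k_j=\kappa_j=0$; then $\eqref{e:Mugnolo}_1$ makes the coefficient computation above hold, so \eqref{Tc4} is satisfied along the common profile, and \eqref{eqref{eq:fjicontinuo}} holds from $\eqref{e:Mugnolo}_2$. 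Applying Theorem~\ref{thm:2} then produces the desired traveling wave. The main obstacle I anticipate is bookkeeping the algebra cleanly — specifically, verifying that the two conditions arising from \eqref{Tc4} (equality of the $f(\Phi)$-coefficients and of the $\Phi$-coefficients) collapse to the single relation $v_{i,j}^2=\delta_{i,j}$ once $c_{i,j}=v_{i,j}$ is used, and making sure the degenerate subcase ($D(0)=0$ or $D(1)=0$, where the profile is constant near an endpoint of $\mathcal{I}$) does not introduce extra freedom: there \eqref{Tc4} is required only on $c_h^{-1}I_h$, which by \eqref{eq:cI} is the common interval $\mathcal{I}$, so the same coefficient-matching on $(\ell^-,\ell^+)$ still applies and nothing changes.
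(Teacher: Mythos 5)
Your proof is correct and takes essentially the same route as the paper: both reduce the claim to Theorem~\ref{thm:2} and translate condition $(\mathcal{T}_c)$ under \eqref{e:proportionality}, with \eqref{Tc1} free, \eqref{eq:fjicontinuo} yielding $\eqref{e:Mugnolo}_2$, and \eqref{Tc4} yielding $\eqref{e:Mugnolo}_1$. The only cosmetic difference is in handling \eqref{Tc4}: the paper factors $g_h(\ell)-g_h(\ell^-)=v_h\left[f(\ell)-f(\ell^-)-\tfrac{c_h}{v_h}(\ell-\ell^-)\right]$, whose bracket is $h$-independent because $c_h/v_h$ is the common chord slope of $f$, so the condition collapses at once to $c_jv_j\delta_j^{-1}=c_iv_i\delta_i^{-1}$, whereas you match the coefficients of $1$, $\Phi$ and $f(\Phi)$ and then check that the two resulting relations coincide once $c_{i,j}=v_{i,j}$ is used.
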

\begin{proof}
We only need to translate condition $(\mathcal{T}_c)$ to the current case. Let $\ell^\pm \in [0,1]$ with $\ell^-<\ell^+$ and $f(\ell^-)\ne f(\ell^+)$.
By \eqref{e:proportionality} it is obvious that \eqref{Tc1} is satisfied.
If $\ell^-=0$ or $\ell^+=1$ condition \eqref{eq:fjicontinuo} is satisfied by (f). In all the other cases \eqref{eq:fjicontinuo} is equivalent to
\(
\sum_{i \in \mathsf{I}} \alpha_{i,j} v_{i,j} = 1
\)
by \eqref{e:proportionality}.
Similarly, condition \eqref{Tc4} reduces to $c_jv_j\delta_j^{-1}=c_iv_i\delta_i^{-1}$ and hence, by \eqref{eq:v=c}, it is equivalent to $v_{i,j}^2=\delta_{i,j}$.
\end{proof}

Remark that by \eqref{eq:v=c} condition $\eqref{e:Mugnolo}_2$ is equivalent to $\eqref{e:4eqn}_4$.


\section{Application to the case of a quadratic flux, \texorpdfstring{$m=1$}{}}\label{s:Greenshields}

In this section we assume \eqref{e:proportionality} for some constants $v_h,\delta_h>0$,  $D$ satisfying {\rm (D)} and the quadratic flux \cite{Greenshields}
\[
f(\rho) \doteq \rho \, (1-\rho),
\]
with no further mention. 
The case when only $\eqref{e:proportionality}_1$ holds is doable and follows with slight modifications. We use the notation introduced in \eqref{eq:vdelta}.

For simplicity, in the whole section we focus on the case $m=1$, see Figure \ref{fig:stargraphm1}, even without explicitly mentioning it. Then $\mathsf{I}=\{1\}$, $\mathsf{J}=\{2,\ldots,n+1\}$, $\mathsf{H}=\{1,2,\ldots, n+1\}$. The general case $m>1$ offers no further difficulties than heavier calculations. 


\begin{figure}[htbp]
\centering
\begin{psfrags}
      \psfrag{1}[l,B]{$\Omega_1$}
      \psfrag{5}[l,B]{$\Omega_2$}
      \psfrag{6}[l,B]{$\stackrel{\vdots}{\Omega}_3$}
      \psfrag{7}[l,B]{$\stackrel{\vdots}{\Omega}_j$}
      \psfrag{8}[l,B]{$\Omega_{n+1}$}
\includegraphics[width=.3\textwidth]{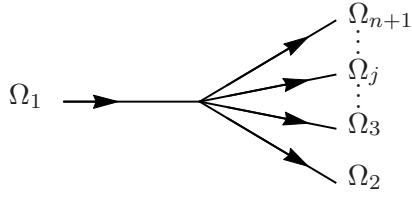}
\end{psfrags}
\caption{A network with $m=1$.}
\label{fig:stargraphm1}
\end{figure}


In this case, condition \eqref{eq:ch} becomes
\begin{align}\label{e:Gspeedg}
&0\le\ell_h^- < \ell_h^+\le1&
&\hbox{ and }&
&c_h = v_h \, [1-\ell_h^+ - \ell_h^-].
\end{align}
In particular, by $\eqref{e:Gspeedg}_2$
\begin{equation}\label{e:phinonst}
\hbox{ $\rho_h$ is stationary $\iff$ $\ell_h^++\ell_h^-=1$.}
\end{equation}
Moreover, $g_h(\ell) = v_h \, \ell \, [\ell_h^+ + \ell_h^ - - \ell]$ implies
\begin{equation}\label{eq:Leprous}
g_h(\ell) - g_h(\ell_h^\pm) = v_h \, (\ell_h^+ - \ell) \, (\ell - \ell_h^-),
\end{equation}
and therefore \eqref{eq:Lorenzo4} becomes
\begin{align}\label{eq:quadratic}
&\delta_h \, D\left(\phi_h(\xi)\right) \, \phi_h'(\xi) =
v_h \left[\ell_h^+-\phi_h(\xi)\right] \left[\phi_h(\xi)-\ell_h^-\right],&
&\xi \in \R.
\end{align}

We first consider stationary traveling waves and specify Theorem \ref{t:1} and Proposition \ref{rem:SC} in the current framework.
We define the intervals
\begin{align*}
&\mathcal{L}_j^0\doteq\begin{cases}
\left[0,1/2\right)&\text{if }\alpha_{1,j} \, v_{1,j} \le 1,
\\
\left[0,\frac{ 1 - \sqrt{1-\alpha_{1,j}^{-1} \, v_{1,j}^{-1}}}{2} \right)&\text{if }\alpha_{1,j} \, v_{1,j} > 1,
\end{cases}&&j \in \mathsf{J}.
\end{align*}

\begin{proposition}
Problem \eqref{eq:model}-\eqref{eq:Queen} admits infinitely many stationary traveling waves; their end states are characterized by the conditions
\begin{align*}
&\ell_1^- \in \bigcap_{j \in \mathsf{J}}\mathcal{L}_j^0,&
&\ell_1^+ + \ell_1^- = 1,&
&\ell_j^\pm = \frac{1}{2} \left[
1\pm\sqrt{1-4 \, \alpha_{1,j} \, v_{1,j} \, \ell_1^+ \, \ell_1^-}
\right],&
&j \in \mathsf{J}.
\end{align*}
Moreover, up to shifts, any stationary traveling wave satisfies \eqref{e:dens_cont}.
\end{proposition}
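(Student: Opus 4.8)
The statement has two parts: an explicit characterization of the end states of all stationary traveling waves, and the assertion that every such wave satisfies the continuity condition up to a shift. For the first part I would specialize the general characterization in Theorem \ref{t:1} to the present setting, where $m=1$ and $f(\rho)=\rho(1-\rho)$ under the proportionality assumption \eqref{e:proportionality}. Recall that by \eqref{e:phinonst} a wave component $\rho_h$ is stationary precisely when $\ell_h^+ + \ell_h^- = 1$; so $\eqref{basic}_1$ forces $\ell_1^+ + \ell_1^- = 1$ and, for each $j \in \mathsf{J}$, $\ell_j^+ + \ell_j^- = 1$. The remaining relation $\eqref{basic}_2$ reads $f_j(\ell_j^-) = \alpha_{1,j} f_1(\ell_1^-)$, i.e. $v_j \, \ell_j^-(1-\ell_j^-) = \alpha_{1,j} \, v_1 \, \ell_1^-(1-\ell_1^-)$. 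Using $1-\ell_j^- = \ell_j^+$ and $1-\ell_1^- = \ell_1^+$ this becomes $v_j \, \ell_j^- \ell_j^+ = \alpha_{1,j} v_1 \ell_1^- \ell_1^+$, hence $\ell_j^-\ell_j^+ = \alpha_{1,j} v_{1,j} \ell_1^- \ell_1^+$. Combined with $\ell_j^- + \ell_j^+ = 1$, the pair $(\ell_j^-,\ell_j^+)$ is the pair of roots of $t^2 - t + \alpha_{1,j}v_{1,j}\ell_1^-\ell_1^+ = 0$, which gives exactly $\ell_j^\pm = \tfrac12\bigl[1 \pm \sqrt{1 - 4\alpha_{1,j}v_{1,j}\ell_1^-\ell_1^+}\bigr]$.

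The next step is to pin down the admissibility constraints. We need $\ell_j^\pm \in [0,1]$ and $\ell_j^- < \ell_j^+$; the latter and the reality of the square root both require $1 - 4\alpha_{1,j}v_{1,j}\ell_1^-\ell_1^+ > 0$. Writing $\ell_1^+ = 1-\ell_1^-$ and setting $s = \ell_1^- \in [0,1/2)$ (the range $[0,1/2)$ already coming from $\ell_1^- < \ell_1^+$ together with $\ell_1^- + \ell_1^+ = 1$), the condition is $4\alpha_{1,j}v_{1,j}\, s(1-s) < 1$. If $\alpha_{1,j}v_{1,j} \le 1$ this holds for all $s \in [0,1/2)$ since $s(1-s) < 1/4$; if $\alpha_{1,j}v_{1,j} > 1$, solving $4\alpha_{1,j}v_{1,j}\, s(1-s) < 1$ for the smaller branch yields $s < \tfrac12\bigl(1 - \sqrt{1 - (\alpha_{1,j}v_{1,j})^{-1}}\bigr)$, which is precisely the definition of $\mathcal L_j^0$. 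One then checks that on this range both roots lie in $(0,1)$ and that $\ell_j^+ = 1 - \ell_j^-$, so the membership $\ell_1^- \in \bigcap_{j\in\mathsf J}\mathcal L_j^0$ is exactly the set of admissible choices; since this is a nonempty interval (it contains $0$), there are infinitely many such waves, and Theorem \ref{t:1} (or the construction in its proof) guarantees a stationary profile in each road for each admissible choice. This is the bookkeeping core of the argument and I expect it to be the only delicate point — one must be careful that the branch of the square root defining $\mathcal L_j^0$ is the correct one and that the endpoint cases $\ell_1^- = 0$ (so $\ell_1^+ = 1$, all $\ell_j^\pm \in \{0,1\}$) are consistent with the formula.

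For the last assertion, that any stationary traveling wave satisfies \eqref{e:dens_cont} up to a shift, I would invoke Proposition \ref{rem:SC} together with Lemma \ref{l:+cc}\emph{(i)}: in the stationary case continuity is equivalent to $\phi_i(0) = \phi_j(0)$, and since each profile $\phi_h$ is the (unique up to shift) monotone solution of \eqref{eq:quadratic} connecting $\ell_h^-$ to $\ell_h^+$ with $\mathrm{range}\,\phi_h = (\ell_h^-,\ell_h^+)$, it suffices to show the open intervals $(\ell_h^-,\ell_h^+)$ have a common point, i.e. $S = \bigcap_{h\in\mathsf H}(\ell_h^-,\ell_h^+) \ne \emptyset$. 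But in the stationary case every interval $(\ell_h^-,\ell_h^+)$ is symmetric about $1/2$ (since $\ell_h^- + \ell_h^+ = 1$), so $1/2 \in S$ always, $S \ne \emptyset$, and one can shift each $\phi_h$ so that $\phi_h(0) = 1/2$; hence \eqref{e:cont_profiles} holds with $c_h = 0$ and the continuity condition \eqref{e:dens_cont} follows. This finishes the proof.
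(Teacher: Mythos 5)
Your proposal is correct and follows essentially the same route as the paper: specialize Theorem \ref{t:1} via \eqref{e:phinonst} to get $\ell_h^-+\ell_h^+=1$ and the quadratic relation determining $\ell_j^\pm$, observe that $\ell_1^-\in\bigcap_{j\in\mathsf{J}}\mathcal{L}_j^0$ is exactly the condition making the roots real and distinct, and then conclude the continuity statement from Proposition \ref{rem:SC} because $1/2\in\bigcap_{h\in\mathsf{H}}(\ell_h^-,\ell_h^+)$. Your write-up merely makes the quadratic-formula bookkeeping more explicit than the paper does.
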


\begin{proof}
The first part of the proposition follows from Theorem \ref{t:1}.
Indeed, conditions \eqref{e:phinonst}, $\eqref{eq:ch}_1$ and \eqref{basic} are satisfied if and only if for any $h\in\mathsf{H}$ and $j\in\mathsf{J}$
\begin{align*}
&\ell_h^- \in [0, 1/2),&
&\ell_h^+ + \ell_h^- = 1,&
&\ell_j^- \, (1 - \ell_j^-) = \alpha_{1,j} \, v_{1,j} \, \ell_1^- \, (1 - \ell_1^-);
\end{align*}
then it is sufficient to compute $\ell_j^\pm$ and to observe that the definition of $\mathcal{L}_j^0$ guarantees that they are real numbers. 

The latter part of the proposition is deduced by Proposition \ref{rem:SC} because $1/2 \in \emph{S} \doteq \bigcap_{h \in \mathsf{H}}(\ell_h^-, \ell_h^+) \ne \emptyset$. 
\end{proof}

In the following we treat the existence of non-stationary traveling waves. Since $m=1$, by Lemma \ref{l:cj=0} this is equivalent to assume $c_h\ne 0$ for $h \in \mathsf{H}$, namely, the traveling wave is completely non-stationary. 
By \eqref{eq:fjpm}, \eqref{e:Gspeedg} and \eqref{e:phinonst}, from \eqref{eq:kappaj} we deduce
\begin{equation}\label{eq:c1jkappa}
\begin{aligned}
c_{1,j}&=v_{1,j}\frac{1-\ell_1^+-\ell_1^-}{1-\ell_j^+-\ell_j^-},& A_{1,j}&=\alpha_{1,j}v_{1,j}\frac{1-\ell_1^+-\ell_1^-}{1-\ell_j^+-\ell_j^-},
\\
k_j &=
A_{1,j} \, L_{1,j}^\pm - \ell_j^\pm,& \kappa_j&=v_j\ell_j^-\ell_j^+-\alpha_{1,j}v_1\ell_1^-\ell_1^+.
\end{aligned}
\end{equation}

The following result translates Theorem \ref{thm:1} to the present case.
We define the intervals
\begin{align*}
&\mathcal{L}_j^c\doteq\begin{cases}
[0,1]&\text{if }\alpha_{1,j} \, v_{1,j} \le 1,
\\
[0,1] \setminus \left(\frac{1 - \sqrt{1-\alpha_{1,j}^{-1} \, v_{1,j}^{-1}}}{2} , \frac{1 + \sqrt{1-\alpha_{1,j}^{-1} \, v_{1,j}^{-1}}}{2}\right) &\text{if }\alpha_{1,j} \, v_{1,j} > 1,
\end{cases}&&j \in \mathsf{J}.
\end{align*}

\begin{proposition}\label{p:TheMarsVolta}
Problem \eqref{eq:model}-\eqref{eq:Queen} admits a (completely) non-stationary traveling wave if and only if the following condition holds.
\begin{itemize}
\item[($\mathcal{T}_q$)]  There exist $\ell_1^{\pm}\in [0,1]$ with $\ell_1^-<\ell_1^+$ such that:
\begin{enumerate}

\item[(i)]  $\ell_1^++\ell_1^-\ne 1$;

\item[(ii)] $\ell_1^\pm \in \bigcap_{j \in \mathsf{J}}\mathcal{L}_j^c$;

\item[(iii)] for any $j \in \mathsf{J}$ we have
\begin{align}\label{eq:maraton}
&D(\ell)= \, \frac{\alpha_{1,j} \, \delta_{1,j}}{v_{1,j}} \, D\left(\frac{\ell+k_j}{A_{1,j}}\right),&
&\ell\in(\ell_j^-,\ell_j^+),
\end{align}
where $k_j$ is defined in \eqref{eq:c1jkappa} with $\ell_j^\pm$ being solutions to
\begin{equation}\label{eq:fjpmGspeed}
\ell_j^{\pm}(1-\ell_j^{\pm})=\alpha_{1,j}v_{1,j}L_{1,j}^\pm(1-L_{1,j}^\pm).
\end{equation}
\end{enumerate}
\end{itemize}
\end{proposition}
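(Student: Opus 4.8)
The plan is to specialize Theorem~\ref{thm:1} to the present setting, in which $m=1$, the flux is the Greenshields quadratic flux $f(\rho)=\rho(1-\rho)$, and \eqref{e:proportionality} holds, so that $f_h=v_hf$ and $D_h=\delta_hD$. Since $m=1$, Lemma~\ref{l:cj=0} tells us that a traveling wave is non-stationary exactly when $c_h\ne0$ for every $h\in\mathsf{H}$, i.e.\ it is completely non-stationary, and then necessarily $\is=\emptyset$, $\ins=\mathsf{I}=\{1\}$. Thus the sum over $i\in\ins$ in Theorem~\ref{thm:1} collapses to the single term $i=1$, and condition $(\mathcal{T})(i)$ becomes $c_1\ne0$, which by \eqref{e:phinonst} is exactly $\ell_1^++\ell_1^-\ne1$, i.e.\ item $(i)$ of $(\mathcal{T}_q)$.

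Next I would translate item $(ii)$ of Theorem~\ref{thm:1}, namely the existence of $\ell_j^\pm\in[0,1]$ satisfying \eqref{eq:fjpm} with $f_j(\ell_j^-)\ne f_j(\ell_j^+)$. With $m=1$, \eqref{eq:fjpm} reads $f_j(\ell_j^\pm)=\alpha_{1,j}f_1(L_{1,j}^\pm)$, i.e.\ $\ell_j^\pm(1-\ell_j^\pm)=\alpha_{1,j}v_{1,j}L_{1,j}^\pm(1-L_{1,j}^\pm)$, which is \eqref{eq:fjpmGspeed}. This quadratic equation in $\ell_j^\pm$ has solutions in $[0,1]$ precisely when its discriminant is nonnegative and the roots lie in $[0,1]$; a direct computation shows $1-4\alpha_{1,j}v_{1,j}L_{1,j}^\pm(1-L_{1,j}^\pm)\ge0$ is equivalent to $L_{1,j}^\pm\notin\bigl(\tfrac{1-\sqrt{1-\alpha_{1,j}^{-1}v_{1,j}^{-1}}}{2},\tfrac{1+\sqrt{1-\alpha_{1,j}^{-1}v_{1,j}^{-1}}}{2}\bigr)$ when $\alpha_{1,j}v_{1,j}>1$, and is automatic when $\alpha_{1,j}v_{1,j}\le1$ (here one uses $\max_{[0,1]}f=1/4$). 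Since $L_{1,j}^\pm\in\{\ell_1^-,\ell_1^+\}$, and since $\mathcal{L}_j^c$ is defined exactly as the complement of that open interval, the requirement that \emph{both} $L_{1,j}^+$ and $L_{1,j}^-$ admit such solutions for every $j\in\mathsf{J}$ is precisely $\ell_1^\pm\in\bigcap_{j\in\mathsf{J}}\mathcal{L}_j^c$, which is item $(ii)$ of $(\mathcal{T}_q)$. One should also check that the condition $f_j(\ell_j^-)\ne f_j(\ell_j^+)$ is automatic here: it follows from $c_j\ne0$, which in turn follows from $c_1\ne0$ via Proposition~\ref{p:Tool}/Lemma~\ref{l:cj=0}, and from the fact that the $L_{1,j}^\pm$ yield genuinely distinct flux values because $\ell_1^-<\ell_1^+$ and $f$ is strictly concave.

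Finally I would translate item $(iii)$ of Theorem~\ref{thm:1}, the functional identity \eqref{eq:RATM}. With $m=1$ the right-hand side is the single term $A_{1,j}c_{1,j}\gamma_1(\phi_1(c_1\xi))$, and by \eqref{e:agree} we have $\gamma_1(\phi_1(c_1\xi))=\phi_1'(c_1\xi)$ off the singular point; the left-hand side is $\gamma_j(\ell_j(c_j\xi))=\phi_j'(c_j\xi)$ with $\phi_j=\ell_j$ given by \eqref{e:elljn}, which for $m=1$ reads $\phi_j(\xi)=A_{1,j}\phi_1(c_{1,j}\xi)-k_j$. Using \eqref{eq:Leprous}, the identity \eqref{eq:Lorenzo4} for the single equation takes the explicit form \eqref{eq:quadratic}: $\delta_hD(\phi_h)\phi_h'=v_h(\ell_h^+-\phi_h)(\phi_h-\ell_h^-)$. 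Substituting $\phi_j=A_{1,j}\phi_1(c_{1,j}\cdot)-k_j$ into the $h=j$ version, computing $\phi_j'$ by the chain rule, and using the $h=1$ version to replace $D(\phi_1)\phi_1'$, the polynomial factors $(\ell_j^+-\phi_j)(\phi_j-\ell_j^-)$ and $(\ell_1^+-\phi_1)(\phi_1-\ell_1^-)$ must match: here one invokes the algebraic relation $A_{1,j}(L_{1,j}^+-\phi_1)(\phi_1-L_{1,j}^-)$ obtained by writing $\ell_j^\pm=A_{1,j}L_{1,j}^\pm-k_j$, which follows from the definition of $k_j$ in \eqref{eq:c1jkappa} together with \eqref{eq:fjpmGspeed} (the two values of $k_j=A_{1,j}L_{1,j}^\pm-\ell_j^\pm$ agree because $A_{1,j}(L_{1,j}^+-L_{1,j}^-)=\ell_j^+-\ell_j^-$, as noted before Proposition~\ref{p:expl-profiles}). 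After this cancellation, the surviving relation is exactly $D(\ell)=\frac{\alpha_{1,j}\delta_{1,j}}{v_{1,j}}D\bigl(\frac{\ell+k_j}{A_{1,j}}\bigr)$ for $\ell=\phi_j(\xi)$ ranging over $(\ell_j^-,\ell_j^+)$, which is \eqref{eq:maraton}. Conversely, if \eqref{eq:maraton} holds, running the computation backwards produces \eqref{eq:RATM}, so $(\mathcal{T}_q)$ implies $(\mathcal{T})$.

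The routine but slightly delicate part is the bookkeeping in the last step: one has to be careful that $\frac{\ell+k_j}{A_{1,j}}$ indeed ranges over $(L_{1,j}^-,L_{1,j}^+)\subseteq[0,1]$ as $\ell$ ranges over $(\ell_j^-,\ell_j^+)$, so that $D$ is being evaluated where it is defined, and that the relation between $c_{1,j}$, $A_{1,j}$ and the end states in \eqref{eq:c1jkappa} is consistent with \eqref{eq:v=c}. The genuine obstacle — really the heart of the matter — is recognizing that the abstract functional equation \eqref{eq:RATM}, which a priori mixes the unknown profile $\phi_1$ with the data, collapses for the quadratic flux to the \emph{profile-independent} functional equation \eqref{eq:maraton} for $D$ alone; this is precisely what makes the $m=1$ Greenshields case tractable, and it hinges on the fact that for $f(\rho)=\rho(1-\rho)$ the quantity $g_h(\ell)-g_h(\ell_h^\pm)$ is the quadratic $v_h(\ell_h^+-\ell)(\ell-\ell_h^-)$, whose affine reparametrization behaves as needed.
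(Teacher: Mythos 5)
Your proposal is correct and follows essentially the same route as the paper: items (i) and (ii) of $(\mathcal{T})$ are translated exactly as in the paper (solving the quadratic \eqref{eq:fjpmGspeed} and reading the reality of the roots as $\ell_1^\pm\in\mathcal{L}_j^c$), and your reduction of \eqref{eq:RATM} to \eqref{eq:maraton} uses the same key algebraic identity $(\ell_j^+-\ell_j)(\ell_j-\ell_j^-)=A_{1,j}^2\left(L_{1,j}^+-\phi_1\right)\left(\phi_1-L_{1,j}^-\right)$, coming from $\ell_j^\pm=A_{1,j}L_{1,j}^\pm-k_j$ and \eqref{eq:Leprous}, that the paper employs to cancel the quadratic factors and leave the profile-independent condition on $D$. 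The only cosmetic difference is that you work directly with the ODE \eqref{eq:quadratic} and the chain rule rather than with the quotient form of \eqref{eq:RATM}, which yields the same computation.
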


\begin{proof}
The proof consists in showing that, in the present case, condition ($\mathcal{T}$) of Theorem \ref{thm:1} is equivalent to ($\mathcal{T}_q$).
\medskip\\
$\bullet$~The first item of ($\mathcal{T}$) is clearly equivalent to the first item of ($\mathcal{T}_q$).
\medskip\\
$\bullet$~We prove now that the second item of ($\mathcal{T}$) is equivalent to the second item of ($\mathcal{T}_q$).

\smallskip
\noindent
``$\Rightarrow$''
Assume that for any $j \in \mathsf{J}$ there exist $\ell_j^\pm\in[0,1]$ satisfying \eqref{eq:fjpm} and such that $f_j(\ell_j^-) \ne f_j(\ell_j^+)$.
Fix $j \in \mathsf{J}$.
Clearly \eqref{eq:fjpm} is equivalent to \eqref{eq:fjpmGspeed}.

If we denote $z_{1,j}^\pm \doteq 4\alpha_{1,j}v_{1,j}L_{1,j}^\pm(1-L_{1,j}^\pm)$, then the $\ell_j^\pm$-solutions to \eqref{eq:fjpmGspeed} are, see \figurename~\ref{fig:4cases},
\begin{align}\label{eq:solfjpmGspeed}
&\begin{cases}
\ell_j^- = \frac{1}{2} \left[1-\sqrt{1-z_{1,j}^-}\right],
\\
\ell_j^+ \in \left\{
\frac{1}{2} \left[1\pm\sqrt{1-z_{1,j}^+}\right]
\right\},
\end{cases}
\hbox{ if }c_j>0,&
&\begin{cases}
\ell_j^- \in \left\{
\frac{1}{2} \left[1\pm\sqrt{1-z_{1,j}^-}\right]
\right\},
\\
\ell_j^+ = \frac{1}{2} \left[1+\sqrt{1-z_{1,j}^+}\right],
\end{cases}
\hbox{ if }c_j<0.
\end{align}
The square roots in \eqref{eq:solfjpmGspeed} are real numbers if and only if $z_{1,j}^\pm\le1$, namely,
\[\ell_1^\pm(1-\ell_1^\pm) \le (4\alpha_{1,j}v_{1,j})^{-1}.\]
It is easy to see that the above estimate is equivalent to require $\ell_1^\pm \in \mathcal{L}_j^c$.

\smallskip
\noindent
``$\Leftarrow$''
Assume that $\ell_1^\pm \in \bigcap_{j \in \mathsf{J}}\mathcal{L}_j^c$ and fix $j \in \mathsf{J}$.
The square roots in \eqref{eq:solfjpmGspeed} are then real numbers and $\ell_j^\pm$ given in \eqref{eq:solfjpmGspeed} satisfy \eqref{eq:fjpmGspeed}, namely \eqref{eq:fjpm}.
Obviously $\ell_j^\pm$ belong to $[0,1]$.
Finally, since $\ell_j^\pm$ are solutions to \eqref{eq:fjpmGspeed}, it is easy to see that $f_j(\ell_j^+) \ne f_j(\ell_j^-)$ because $f_1(\ell_1^+) \ne f_1(\ell_1^-)$.
\medskip\\
$\bullet$~We prove now that ($\mathcal{T}$) implies the last item of ($\mathcal{T}_q$).
Since the first two items in ($\mathcal{T}$) are equivalent to the first two items in ($\mathcal{T}_q$), we can assume that $\ell_1^++\ell_1^-\ne 1$, $\ell_1^\pm \in \bigcap_{j \in \mathsf{J}}\mathcal{L}_j^c$ and that for any $j \in \mathsf{J}$ we have \eqref{eq:RATM}, namely,
\begin{equation}\label{e:GS_equiv2}
\dfrac{\left(\ell_j^+-\ell_j(c_j \, \xi)\right) \left(\ell_j(c_j \, \xi)-\ell_j^-\right)}{D\left(\ell_j(c_j \, \xi)\right)}
=
\frac{A_{1,j} \, c_{1,j} \, v_{1,j}}{\delta_{1,j}} \,
\dfrac{\left(\ell_1^+-\phi_1( c_1 \, \xi)\right)  \left(\phi_1( c_1 \, \xi)-\ell_1^-\right)}{D\left(\phi_1( c_1 \, \xi)\right)}
\end{equation}
for a.e.\ $\xi \in \R$, where $\phi_1$ is a solution to \eqref{e:infty}-\eqref{eq:quadratic} and
\begin{align*}
&\ell_j(\xi) \doteq A_{1,j} \left[ \phi_1(c_{1,j}\, \xi)-L_{1,j}^{\pm} \right]+ \ell_j^{\pm},&
&\xi\in\R.
\end{align*}
We point out that the above expression of $\ell_j$ is deduced from \eqref{e:elljn} by applying \eqref{eq:c1jkappa}; moreover \eqref{e:GS_equiv2} is deduced from \eqref{eq:RATM} by applying \eqref{eq:Leprous}.
Recall that both fractions in \eqref{e:GS_equiv2} are meant as in \eqref{e:gammah}.
Since
\begin{align*}
\left(\ell_j^+-\ell_j(c_j \, \xi)\right) \left(\ell_j(c_j \, \xi)-\ell_j^-\right) &=
A_{1,j}^2 \left( L_{1,j}^+ - \phi_1(c_1\, \xi) \right) \left( \phi_1(c_1\, \xi) - L_{1,j}^- \right),
\\
\left(\ell_1^+-\phi_1( c_1 \, \xi)\right)  \left(\phi_1( c_1 \, \xi)-\ell_1^-\right) &=
\left(L_{1,j}^+-\phi_1( c_1 \, \xi)\right)  \left(\phi_1( c_1 \, \xi)-L_{1,j}^-\right),
\end{align*}
we have that \eqref{e:GS_equiv2} is equivalent to
\begin{align*}
&D\left(\ell_j(c_j\xi)\right) = \dfrac{\alpha_{1,j}\delta_{1,j}}{v_{1,j}} D\left(\phi_1(c_1 \, \xi)\right)&&\text{for a.e.\ }\xi \in \R.
\end{align*}
To conclude now that the above condition is equivalent to \eqref{eq:maraton} it is sufficient to recall that by Lemma \ref{l:phij} the continuous function $\xi \mapsto \ell_j(\xi)$ is increasing and $\ell_j(\pm\infty) = \ell_j^\pm$ and that $\ell_j(\xi) = A_{1,j} \, \phi_1(c_{1,j} \, \xi) - k_j$ by \eqref{eq:c1jkappa}.
\medskip\\
$\bullet$~Finally, to prove that ($\mathcal{T}_q$) implies the last item of ($\mathcal{T}$) it is enough to trace backwards the proof of the previous item.
\end{proof}

We notice that if $D$ is a polynomial with degree $\mathfrak{d}$, then \eqref{eq:maraton} is equivalent to $\mathfrak{d} + 1$ conditions on the parameters, see for instance \eqref{eq:Anneke} and \eqref{eq:mnk-0}.

\begin{remark}\label{rem:AmonAmarth}
We point out that by Proposition \ref{p:super} we have that problem \eqref{eq:model}-\eqref{eq:Queen} admits a (completely) non-stationary traveling wave satisfying \eqref{e:dens_cont} if and only if
\begin{align}\label{eq:AmonAmarth}
&v_{1,j}^2=\delta_{1,j} &\hbox{ and }&
&\alpha_{1,j} v_{1,j} = 1,&
&j\in\mathsf{J}.
\end{align}
\end{remark}

The special cases of constant or linear diffusivities are treated in the following subsections.

\subsection{The case of constant diffusivities}\label{s:fGdC}

In this subsection we assume
\begin{equation}\label{e:Dhconstant}
D \doteq 1,
\end{equation}
and in this case problem \eqref{e:infty}-\eqref{eq:quadratic} reduces to
\begin{equation}\label{e:GGproblem}
\begin{cases}
\delta_h \, \phi_h'(\xi) = v_h \left[\ell_h^+-\phi_h(\xi)\right]\left[\phi_h(\xi)-\ell_h^-\right],&\xi \in \R,
\\
\phi_h(\pm\infty) = \ell_h^\pm.
\end{cases}
\end{equation}
For any $h\in\mathsf{H}$, the function
\begin{equation}\label{eq:Satriani}
\psi_h(\xi)
\doteq
\dfrac{\ell_h^+}{1+e^{-\frac{v_h}{\delta_h} \left[\ell_h^+-\ell_h^-\right] \xi}}
+
\dfrac{\ell_h^-}{1+e^{\frac{v_h}{\delta_h} \left[\ell_h^+-\ell_h^-\right] \xi}}
\end{equation}
solves \eqref{e:GGproblem} because $\ell_h^- <\ell_h^+$; all the other solutions are of the form $\phi_h(\xi)=\psi_h(\xi+\sigma_h)$ for $\sigma_h \in \R$. Notice that $\psi_h(0)=(\ell_h^+ + \ell_h^-)/2$. 

We rewrite Proposition \ref{p:TheMarsVolta} in the current setting; we emphasize that the shifts appear below because in this case we have the {\em explicit} solution \eqref{eq:Satriani} to problem \eqref{e:GGproblem}.

\begin{proposition}\label{p:GGfinal}
Assume \eqref{e:Dhconstant}. 
Problem \eqref{eq:model}-\eqref{eq:Queen} admits a (completely) non-stationary traveling wave if and only if
\begin{equation}\label{eq:Anneke}
\alpha_{1,j}\delta_{1,j}=v_{1,j}.
\end{equation}
In this case any non-stationary traveling wave $\rho$ has a profile $\varphi$ of the form
\begin{align}\label{e:phiphij}
&\phi(\xi)=\left(\psi_1(\xi +\sigma_1), \ldots, \psi_{n+1}(\xi+\sigma_{n+1})\right),&
&\xi \in \R,
\end{align}
with $\ell_h^\pm$ satisfying (i), (ii) and \eqref{eq:fjpmGspeed} in Proposition \ref{p:TheMarsVolta} and $\sigma_h\in \R$, $h\in\mathsf{H}$, such that
\begin{align}\label{eq:Anneke2}
&c_j\sigma_1 =c_1\sigma_j,&&j\in\mathsf{J}.
\end{align}
\end{proposition}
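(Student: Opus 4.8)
The plan is to derive Proposition \ref{p:GGfinal} as a specialization of Proposition \ref{p:TheMarsVolta} with $D\equiv1$, the only nontrivial point being the translation of condition (iii) (equation \eqref{eq:maraton}) and the extraction of the shift compatibility \eqref{eq:Anneke2}. First I would observe that with $D\equiv1$, condition \eqref{eq:maraton} becomes $1 = \alpha_{1,j}\delta_{1,j}/v_{1,j}$, i.e.\ exactly \eqref{eq:Anneke}; moreover, when \eqref{eq:Anneke} holds one checks that $A_{1,j} = \alpha_{1,j}v_{1,j}(1-\ell_1^+-\ell_1^-)/(1-\ell_j^+-\ell_j^-)$ automatically equals $1$ (using \eqref{eq:c1jkappa} together with the identity \eqref{eq:fjpmGspeed} relating the end states), so that $k_j = L_{1,j}^\pm - \ell_j^\pm$ and items (i), (ii), \eqref{eq:fjpmGspeed} are precisely the surviving content of $(\mathcal T_q)$. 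This reduces the ``if and only if'' to \eqref{eq:Anneke}.

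Next I would pin down the profile. By Proposition \ref{p:expl-profiles}, $\phi_1$ is any solution of \eqref{e:infty}-\eqref{eq:quadratic}, hence of the form $\phi_1(\xi)=\psi_1(\xi+\sigma_1)$ with $\psi_1$ the explicit logistic-type function \eqref{eq:Satriani} and $\sigma_1\in\R$; and $\phi_j$ is determined by \eqref{eq:1DevinTownsend}, which with $A_{1,j}=1$, $k_j=L_{1,j}^\pm-\ell_j^\pm$ and $c_{1,j}=v_{1,j}$ reads $\phi_j(\xi)=\phi_1(c_{1,j}\xi) - L_{1,j}^\pm + \ell_j^\pm$. The task is then to verify that the right-hand side is exactly $\psi_j(\xi+\sigma_j)$ for the appropriate $\sigma_j$. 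Plugging $\phi_1(\cdot)=\psi_1(\cdot+\sigma_1)$ into the affine relation and using $c_j\sigma_1=c_1\sigma_j$, i.e.\ $c_{1,j}\sigma_j=\sigma_1$, one gets $\phi_j(\xi)=\psi_1\!\left(c_{1,j}(\xi+\sigma_j)\right) - L_{1,j}^\pm + \ell_j^\pm$; a direct computation with the exponential formula \eqref{eq:Satriani}, using $\tfrac{v_1}{\delta_1}(\ell_1^+-\ell_1^-)\,c_{1,j}=\tfrac{v_j}{\delta_j}(\ell_j^+-\ell_j^-)$ (which follows from \eqref{eq:Anneke}, \eqref{eq:c1jkappa} and \eqref{eq:fjpmGspeed}) and the affine map sending $\{L_{1,j}^-,L_{1,j}^+\}$ to $\{\ell_j^-,\ell_j^+\}$, shows this equals $\psi_j(\xi+\sigma_j)$. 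Conversely, if $\phi_j(\xi)=\psi_j(\xi+\sigma_j)$ then matching the argument of the exponentials forces $c_{1,j}\sigma_j=\sigma_1$, i.e.\ \eqref{eq:Anneke2}.

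The remaining point is to confirm that every non-stationary traveling wave indeed has this form: by Theorem \ref{thm:1}/Proposition \ref{p:expl-profiles} the ingoing profile $\phi_1$ must solve \eqref{e:GGproblem} up to a shift, hence equals $\psi_1(\cdot+\sigma_1)$ for some $\sigma_1$, and then each $\phi_j$ is forced by \eqref{eq:1DevinTownsend}; the computation above shows $\phi_j$ is $\psi_j$ shifted by exactly $\sigma_j=\sigma_1/c_{1,j}$, which is \eqref{eq:Anneke2}. So \eqref{e:phiphij}-\eqref{eq:Anneke2} describe all such waves, and the shift $\sigma_1$ (equivalently the common shift of the whole front) remains free, consistent with solutions being determined only up to translation.

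I expect the main obstacle to be purely computational: carefully carrying out the exponential bookkeeping in \eqref{eq:Satriani} to see that the affine transformation of $\psi_1$ coincides with $\psi_j$, and keeping the $\pm$ bookkeeping in $L_{1,j}^\pm$ consistent across the cases $c_j>0$ and $c_j<0$ (cf.\ \figurename~\ref{fig:4cases} and \eqref{eq:solfjpmGspeed}). Once one notices that \eqref{eq:Anneke} is exactly what makes both $A_{1,j}=1$ and the exponential rates match, everything else is a routine verification.
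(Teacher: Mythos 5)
Your route is genuinely different from the paper's: the paper never passes through Proposition \ref{p:TheMarsVolta} here, but instead substitutes the explicit profiles \eqref{eq:Satriani} directly into the flux condition \eqref{eq:Tool}, rewrites it as \eqref{eq:DMB}, and factors it into $\zeta_j\equiv\zeta_1$ or $\zeta_j\zeta_1\equiv1$, which produces \eqref{eq:Anneke} and \eqref{eq:Anneke2} simultaneously. Your first reduction is correct: with $D\equiv1$ both sides of \eqref{eq:maraton} are constants, so condition (iii) of $(\mathcal T_q)$ collapses to $\alpha_{1,j}\delta_{1,j}=v_{1,j}$, and this gives the existence equivalence cleanly.

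There is, however, a concrete error in the profile identification. You assert that \eqref{eq:Anneke} forces $A_{1,j}=1$, $k_j=L_{1,j}^\pm-\ell_j^\pm$ and $c_{1,j}=v_{1,j}$; none of these holds in general. By \eqref{eq:c1jkappa}, $c_{1,j}=v_{1,j}\frac{1-\ell_1^+-\ell_1^-}{1-\ell_j^+-\ell_j^-}$, and subtracting the two equations in \eqref{eq:fjpmGspeed} gives $A_{1,j}\left(L_{1,j}^+-L_{1,j}^-\right)=\ell_j^+-\ell_j^-$, so $A_{1,j}=(\ell_j^+-\ell_j^-)/(L_{1,j}^+-L_{1,j}^-)$, which equals $1$ only when $\ell_j^\pm=L_{1,j}^\pm$, i.e.\ essentially under \eqref{eq:Beardfish} --- a condition that \eqref{eq:Anneke} does not imply. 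For instance $\ell_1^-=0$, $\ell_1^+=0.3$, $\alpha_{1,j}v_{1,j}=1/2$ (with $\delta_{1,j}=v_{1,j}/\alpha_{1,j}$ so that \eqref{eq:Anneke} holds) admits the solution $\ell_j^-=0$, $\ell_j^+=\frac{1-\sqrt{0.58}}{2}\approx0.119$ of \eqref{eq:fjpmGspeed}, giving $A_{1,j}\approx0.4$. Consequently your formula $\phi_j(\xi)=\phi_1(c_{1,j}\xi)-L_{1,j}^\pm+\ell_j^\pm$ is a unit-slope translate of $\phi_1(c_{1,j}\cdot)$ and cannot coincide with $\psi_j(\xi+\sigma_j)$, whose amplitude $\ell_j^+-\ell_j^-$ differs from $\ell_1^+-\ell_1^-$; the exponential matching you describe would fail as written. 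The argument can be repaired by keeping the correct affine map from \eqref{eq:1DevinTownsend}, $\phi_j(\xi)=A_{1,j}\left[\phi_1(c_{1,j}\xi)-L_{1,j}^\pm\right]+\ell_j^\pm$: the amplitudes then match automatically, \eqref{eq:Anneke} (via $v_{1,j}/\delta_{1,j}=\alpha_{1,j}$) is exactly what makes the exponential rates $\frac{v_1}{\delta_1}(\ell_1^+-\ell_1^-)\,c_{1,j}$ and $\frac{v_j}{\delta_j}(\ell_j^+-\ell_j^-)$ agree up to the sign governed by $\sgn(c_1c_j)$, and the shift identification $\sigma_1=c_{1,j}\sigma_j$, i.e.\ \eqref{eq:Anneke2}, then follows as you indicate.
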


\begin{proof}
By Theorem \ref{t:E}, any solution to \eqref{e:GGproblem} has the form \eqref{e:phiphij} with $\sigma_h\in \R$, $h\in\mathsf{H}$. Therefore, by Proposition \ref{p:main} it only remains to prove that \eqref{eq:Tool} is equivalent to \eqref{eq:Anneke}-\eqref{eq:Anneke2}.
Straightforward computations show that in the present case \eqref{eq:Tool} can be written as
\begin{align}\label{eq:DMB}
&\frac{f_j(\ell_j^+) \, \zeta_j(t) + f_j(\ell_j^-)}{1+\zeta_j(t)} = \alpha_{1,j} \, \frac{f_1(\ell_1^+) \, \zeta_1(t) + f_1(\ell_1^-)}{1+\zeta_1(t)},&t \in \R, ~ j \in \mathsf{J},
\end{align}
where $\zeta_h(t) \doteq \exp z_h(t)$, for $z_h(t) \doteq \frac{v_h}{\delta_h} \, (\ell_h^+-\ell_h^-) (c_ht+\sigma_h)$, $h \in \mathsf{H}$.
By Proposition \ref{p:Tool} we have
\begin{align*}
&\hbox{ either }\quad f_j(\ell_j^\pm) = \alpha_{1,j}f_1(\ell_1^\pm),&
&\hbox{ or } \quad f_j(\ell_j^\pm) = \alpha_{1,j}f_1(\ell_1^\mp).
\end{align*}
\medskip\noindent$\bullet$~In the former case, identity \eqref{eq:DMB} is equivalent to
\begin{align*}
&\left[ f_j(\ell_j^+) - f_j(\ell_j^-) \right] \left[\zeta_j(t)-\zeta_1(t) \right]=0,
&t \in \R, ~ j \in \mathsf{J}.
\end{align*}
Since by assumption $f_j(\ell_j^+) \neq f_j(\ell_j^-)$, it must be $\zeta_j \equiv \zeta_1$, i.e., $z_j(t) = z_1(t)$, namely
\begin{align*}
&\begin{cases}
\frac{v_j}{\delta_j} \, (\ell_j^+-\ell_j^-) \, c_j = \frac{v_1}{\delta_1} \, (\ell_1^+-\ell_1^-) \, c_1,
\\
\frac{v_j}{\delta_j} \, (\ell_j^+-\ell_j^-) \, \sigma_j = \frac{v_1}{\delta_1} \, (\ell_1^+-\ell_1^-) \, \sigma_1,
\end{cases}&
&\Leftrightarrow&
&\begin{cases}
\frac{v_{1,j}}{\delta_{1,j}} = \frac{f_j(\ell_j^+) - f_j(\ell_j^-)}{f_1(\ell_1^+) - f_1(\ell_1^-)} = \alpha_{1,j},
\\
\frac{\sigma_j}{c_j} = \frac{\sigma_1}{c_1}.
\end{cases}&
\end{align*}
\\$\bullet$~In the latter case, identity \eqref{eq:DMB} is equivalent to
\begin{align*}
&\left[ f_j(\ell_j^+) - f_j(\ell_j^-) \right]\left[\zeta_j(t)\zeta_1(t)-1\right]=0,
&t \in \R, ~ j \in \mathsf{J}.
\end{align*}
Since by assumption $f_j(\ell_j^+) \neq f_j(\ell_j^-)$, it must be $\zeta_j \, \zeta_1 \equiv 1$, i.e. $z_j(t) = -z_1(t)$, namely
\begin{align*}
&\begin{cases}
\frac{v_j}{\delta_j} \, (\ell_j^+-\ell_j^-) \, c_j =- \frac{v_1}{\delta_1} \, (\ell_1^+-\ell_1^-) \, c_1,
\\
\frac{v_j}{\delta_j} \, (\ell_j^+-\ell_j^-) \, \sigma_j =- \frac{v_1}{\delta_1} \, (\ell_1^+-\ell_1^-) \, \sigma_1,
\end{cases}&
&\Leftrightarrow&
&\begin{cases}
\frac{v_{1,j}}{\delta_{1,j}} = -\frac{f_j(\ell_j^+) - f_j(\ell_j^-)}{f_1(\ell_1^+) - f_1(\ell_1^-)} = \alpha_{1,j},
\\
\frac{\sigma_j}{c_j} = \frac{\sigma_1}{c_1}.
\end{cases}&
\end{align*}
In both cases we proved that \eqref{eq:Tool} is equivalent to \eqref{eq:Anneke}-\eqref{eq:Anneke2}; this concludes the proof.
\end{proof}

\begin{remark}
Consider conditions $\eqref{eq:AmonAmarth}_1$, $\eqref{eq:AmonAmarth}_2$ and \eqref{eq:Anneke}. Any two of them implies the third one.
\end{remark}

\begin{proposition}
Assume \eqref{e:Dhconstant}. 
Problem \eqref{eq:model}-\eqref{eq:Queen} admits a (completely) non-stationary traveling wave satisfying \eqref{e:dens_cont} if and only if \eqref{eq:AmonAmarth} holds true.
In this case a non-stationary traveling wave satisfies \eqref{e:dens_cont} if and only if its end states satisfy \eqref{eq:Beardfish}.
\end{proposition}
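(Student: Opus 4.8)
The statement gathers two claims, and the plan is to settle each by specializing results already established. For the first one — a completely non-stationary traveling wave satisfying \eqref{e:dens_cont} exists if and only if \eqref{eq:AmonAmarth} holds — I would invoke Remark~\ref{rem:AmonAmarth} directly; equivalently, this is Proposition~\ref{p:super} read with $D\equiv1$ and $m=1$, a case in which $\sum_{i\in\mathsf{I}}\alpha_{i,j}v_{i,j}=\alpha_{1,j}v_{1,j}$, so that condition \eqref{e:Mugnolo} is exactly \eqref{eq:AmonAmarth}. I would also note once and for all that \eqref{eq:AmonAmarth} implies \eqref{eq:Anneke}, since $\alpha_{1,j}\delta_{1,j}=\alpha_{1,j}v_{1,j}^2=v_{1,j}$; this fact is needed below.

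For the second claim I would argue the two implications in turn. The ``only if'' direction is free: if a non-stationary traveling wave $\rho$ satisfies \eqref{e:dens_cont}, then Lemma~\ref{l:+cc}\emph{(ii)} gives at once that $\rho$ is completely non-stationary, that all the speeds $c_h$ have the same sign, and that \eqref{eq:Beardfish} holds.

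The ``if'' direction carries the only computation. Assume \eqref{eq:AmonAmarth} and let $\rho$ be a non-stationary traveling wave whose end states satisfy \eqref{eq:Beardfish}, say $\ell_1^\pm=\ell_j^\pm=L_{1,j}^\pm=:\ell^\pm$ for all $j\in\mathsf{J}$; then all speeds have the same sign and, by $\eqref{e:Gspeedg}_2$, $c_h=v_h(1-\ell^+-\ell^-)$ with the common factor $1-\ell^+-\ell^-\ne0$ (non-stationarity, cf.\ \eqref{e:phinonst}). Since \eqref{eq:AmonAmarth} entails \eqref{eq:Anneke}, Proposition~\ref{p:GGfinal} applies: the profile is $\phi_h=\psi_h(\cdot+\sigma_h)$ with $\psi_h$ the logistic function \eqref{eq:Satriani}, and the shifts obey \eqref{eq:Anneke2}, i.e.\ $\sigma_j=\frac{c_j}{c_1}\sigma_1=\frac{v_j}{v_1}\sigma_1$. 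Because all end states coincide, $\psi_h$ depends on $h$ only through the rate $r_h\doteq\frac{v_h}{\delta_h}(\ell^+-\ell^-)$, and in fact $\psi_j(\xi)=\psi_1\big(\tfrac{r_j}{r_1}\xi\big)$. By Lemma~\ref{l:+cc}\emph{(i)} it suffices to verify \eqref{e:cont_profiles}, namely $\psi_j(c_jt+\sigma_j)=\psi_1(c_1t+\sigma_1)$ for all $t\in\R$; since $\psi_1$ is a strictly increasing bijection of $\R$ onto $(\ell^-,\ell^+)$, this reduces to $\frac{r_j}{r_1}(c_jt+\sigma_j)=c_1t+\sigma_1$ identically in $t$, hence to $\frac{r_j}{r_1}c_j=c_1$ and $\frac{r_j}{r_1}\sigma_j=\sigma_1$. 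Substituting $c_j=\frac{v_j}{v_1}c_1$ and $\sigma_j=\frac{v_j}{v_1}\sigma_1$, both identities collapse to the single relation $\frac{v_j^2}{v_1^2}=\frac{\delta_j}{\delta_1}$, which is $\eqref{eq:AmonAmarth}_1$. Hence \eqref{e:cont_profiles}, and therefore \eqref{e:dens_cont}, hold.

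I do not expect a real obstacle: the argument is bookkeeping, and the only points to watch are that \eqref{eq:AmonAmarth} indeed triggers the hypotheses of Proposition~\ref{p:GGfinal} (through \eqref{eq:Anneke}) and that the shift constraint \eqref{eq:Anneke2} is correctly propagated through the last computation. An equivalent route avoiding the closed form \eqref{eq:Satriani} would be to observe that, under \eqref{eq:Beardfish} and $\eqref{eq:AmonAmarth}_1$, the rescaled profiles $t\mapsto\phi_h(c_ht)$ all solve the same scalar autonomous equation $u'=\frac{c_1v_1}{\delta_1}(\ell^+-u)(u-\ell^-)$ coming from \eqref{eq:quadratic}, hence differ only by a time translation, which \eqref{eq:Anneke2} then forces to vanish.
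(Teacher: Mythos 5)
Your proof is correct and follows the same route as the paper: the first claim via Remark~\ref{rem:AmonAmarth} (i.e.\ Proposition~\ref{p:super}), the ``only if'' of the second claim via Lemma~\ref{l:+cc}, and the ``if'' via the explicit profiles \eqref{eq:Satriani} together with the shift constraint \eqref{eq:Anneke2} from Proposition~\ref{p:GGfinal}. The only difference is that you actually carry out the ``long but straightforward computations'' the paper omits (reducing \eqref{e:cont_profiles} to $\tfrac{r_j}{r_1}c_j=c_1$ and $\tfrac{r_j}{r_1}\sigma_j=\sigma_1$, both collapsing to $v_{1,j}^2=\delta_{1,j}$), and your check is accurate.
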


\begin{proof}
The first part of the statement is just Remark \ref{rem:AmonAmarth}. In this case, since \eqref{eq:AmonAmarth} implies \eqref{eq:Anneke}, by Proposition \ref{p:GGfinal} any (completely) non-stationary traveling wave $\rho$ has a profile of the form \eqref{e:phiphij}-\eqref{eq:Anneke2}.

The second part of the statement characterizes the end states. If a non-stationary traveling wave $\rho$ satisfies \eqref{e:dens_cont}, then \eqref{eq:Beardfish} holds because of Lemma \ref{l:+cc}. Conversely, if the end states of $\rho$ satisfy \eqref{eq:Beardfish}, then long but straightforward computations show that \eqref{e:cont_profiles} holds true, and therefore $\rho$ satisfies \eqref{e:dens_cont}. 
\end{proof}


\subsection{The case of linear diffusivities}\label{s:fGdL}
In this subsection we assume
\begin{equation}\label{eq:d-lin}
D(\rho) \doteq \rho.
\end{equation}
We notice that $D$ degenerates at $0$ and this makes the subject more interesting. In this case problem \eqref{e:infty}-\eqref{eq:quadratic} reduces to
\begin{equation}\label{e:GDlinear}
\begin{cases}
\delta_h\varphi_{h}\varphi'_h =
v_h (\ell^{+}_h - \varphi_h) (\varphi_h - \ell^{-}_h),& \xi \in \R,
\\
\varphi_{h}(\pm\infty)=\ell_h^{\pm}.
\end{cases}
\end{equation}
If $\ell_h^-=0$, then the function
\begin{equation}\label{eq:phi2}
\psi_h(\xi) \doteq 
\begin{cases}
\frac{\ell_h^+}{2}\left(2-e^{-\frac{v_h}{\delta_h}\xi}\right)&\hbox{ if }\xi\ge-\frac{\delta_h}{v_h}\ln2,
\\[2mm]
0&\hbox{ if }\xi<-\frac{\delta_h}{v_h}\ln2,
\end{cases}
\end{equation}
solves \eqref{e:GDlinear} because $\ell_h^- <\ell_h^+$; by \eqref{e:Ih} we have $I_h=\left(-\frac{\delta_h}{v_h}\ln2,\infty\right)$. If $\ell_h^->0$, then $I_h=\R$ and the function $\psi_h$ implicitly given by
\begin{equation}\label{eq:implicitly}
\left[
2 \exp\left( \dfrac{v_h}{\delta_h} \, \xi\right) \frac{\psi_h(\xi)-\ell_h^-}{\ell_h^+-\ell_h^-}
\right]^{\ell_h^-}
=
\left[
2 \exp\left( \dfrac{v_h}{\delta_h} \, \xi\right) \frac{\ell_h^+-\psi_h(\xi)}{\ell_h^+-\ell_h^-}
\right]^{\ell_h^+}
\end{equation}
solves \eqref{e:GDlinear} because $\ell_h^- <\ell_h^+$.
Notice that in both cases $\psi_h(0)=(\ell_h^+ + \ell_h^-)/2$ and all the other solutions are of the form $\phi_h(\xi)=\psi_h(\xi+\sigma_h)$ for $\sigma_h \in \R$.
Hence, any non-stationary traveling wave $\rho$ has a profile $\varphi$ of the form
\begin{align}
&\phi(\xi)=\left(\psi_1(\xi +\sigma_1), \ldots, \psi_{n+1}(\xi+\sigma_{n+1})\right),&
&\xi \in \R.
\label{e:phiphij2}
\end{align}
In the sequel we prove that the shifts $\sigma_h$, $h \in \mathsf{H}$, satisfy \eqref{eq:Anneke2}, or equivalently
\begin{align}\label{eq:PainOfSalvation}
&v_{1,j} \, \sigma_1 =\delta_{1,j} \, \sigma_j,&&j\in\mathsf{J}.
\end{align}

\begin{lemma}\label{l:CNSGreen}
Assume \eqref{eq:d-lin}. If $\ell_1^+ + \ell_1^-\ne1$, then condition \eqref{eq:maraton} is equivalent to
\begin{align}\label{eq:mnk-0}
&\frac{v_{1,j}^2}{\delta_{1,j}} = \frac{1-\ell_j^+-\ell_j^-}{1-\ell_1^+-\ell_1^-}&\hbox{ and }&
&\ell_j^- \, \ell_j^+ = \alpha_{1,j} \, v_{1,j} \, \ell_1^- \, \ell_1^+.
\end{align}
\end{lemma}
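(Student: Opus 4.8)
The plan is to exploit the fact that, once the identity $D(\rho)=\rho$ from \eqref{eq:d-lin} is inserted, condition \eqref{eq:maraton} degenerates into an \emph{affine} identity in the single real variable $\ell$ which is required to hold on the whole interval $(\ell_j^-,\ell_j^+)$; and two affine functions that agree on a nondegenerate interval must have equal coefficients. Throughout I use that $c_j\ne0$, equivalently $1-\ell_j^+-\ell_j^-\ne0$: this follows from $c_1\ne0$, itself a consequence of the standing hypothesis $\ell_1^++\ell_1^-\ne1$ via \eqref{e:phinonst}, together with Lemma \ref{l:cj=0}. In particular $A_{1,j}$ as in \eqref{eq:c1jkappa} is a well-defined nonzero real number and, by Lemma \ref{l:phij}, $\ell_j^-<\ell_j^+$, so $(\ell_j^-,\ell_j^+)$ is a nondegenerate interval.

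For the direction ``\eqref{eq:maraton} $\Rightarrow$ \eqref{eq:mnk-0}'' I would substitute $D(\rho)=\rho$ (which is licit also at the argument $\tfrac{\ell+k_j}{A_{1,j}}$, since this quantity stays in $(\ell_1^-,\ell_1^+)\subseteq[0,1]$ as $\ell$ runs over $(\ell_j^-,\ell_j^+)$, cf.\ \eqref{e:elljn} specialized to $\ins=\{1\}$ and \eqref{eq:c1jkappa}); then \eqref{eq:maraton} becomes
\[
\ell=\frac{\alpha_{1,j}\,\delta_{1,j}}{v_{1,j}\,A_{1,j}}\,\ell+\frac{\alpha_{1,j}\,\delta_{1,j}\,k_j}{v_{1,j}\,A_{1,j}},\qquad\ell\in(\ell_j^-,\ell_j^+).
\]
Equating the coefficient of $\ell$ and the constant term (possible because the interval is nondegenerate) gives the two scalar identities $\tfrac{\alpha_{1,j}\delta_{1,j}}{v_{1,j}A_{1,j}}=1$ and $k_j=0$. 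Plugging the explicit value $A_{1,j}=\alpha_{1,j}v_{1,j}\tfrac{1-\ell_1^+-\ell_1^-}{1-\ell_j^+-\ell_j^-}$ from \eqref{eq:c1jkappa} into the first identity and simplifying yields exactly $\tfrac{v_{1,j}^2}{\delta_{1,j}}=\tfrac{1-\ell_j^+-\ell_j^-}{1-\ell_1^+-\ell_1^-}$, i.e.\ $\eqref{eq:mnk-0}_1$. For the second identity I use $\kappa_j=c_jk_j$ together with $\kappa_j=v_j\ell_j^-\ell_j^+-\alpha_{1,j}v_1\ell_1^-\ell_1^+$ from \eqref{eq:c1jkappa}: since $c_j\ne0$, the relation $k_j=0$ is equivalent to $\kappa_j=0$, hence to $v_j\ell_j^-\ell_j^+=\alpha_{1,j}v_1\ell_1^-\ell_1^+$, that is, after dividing by $v_j$, to $\eqref{eq:mnk-0}_2$.

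The converse ``\eqref{eq:mnk-0} $\Rightarrow$ \eqref{eq:maraton}'' is obtained by running the same computation backwards: $\eqref{eq:mnk-0}_1$ gives $\tfrac{\alpha_{1,j}\delta_{1,j}}{v_{1,j}A_{1,j}}=1$ and $\eqref{eq:mnk-0}_2$ gives $\kappa_j=0$, hence $k_j=0$, so the right-hand side of \eqref{eq:maraton} equals $\tfrac{\alpha_{1,j}\delta_{1,j}}{v_{1,j}A_{1,j}}(\ell+k_j)=\ell=D(\ell)$ for every $\ell\in(\ell_j^-,\ell_j^+)$. I do not expect a genuine obstacle here: the entire argument reduces to the elementary fact that an affine function is pinned down by an interval's worth of values, plus the bookkeeping of $A_{1,j}$ and $\kappa_j=c_jk_j$ from \eqref{eq:c1jkappa}. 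The only points deserving a line of care are checking that the divisions by $A_{1,j}$, by $c_j$ and by $1-\ell_1^+-\ell_1^-$ are all licit, and that evaluating $D$ at $\tfrac{\ell+k_j}{A_{1,j}}$ is meaningful because this argument lies in $[0,1]$.
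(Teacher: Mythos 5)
Your proof is correct and follows essentially the same route as the paper: substitute $D(\rho)=\rho$ into \eqref{eq:maraton}, observe that the resulting affine identity in $\ell$ on the nondegenerate interval $(\ell_j^-,\ell_j^+)$ forces the slope condition $c_{1,j}v_{1,j}=\delta_{1,j}$ (equivalently $\eqref{eq:mnk-0}_1$ via $\eqref{eq:c1jkappa}_1$) and $k_j=0$ (equivalently $\eqref{eq:mnk-0}_2$ via $\kappa_j=c_jk_j$ and $\eqref{eq:c1jkappa}_4$). The extra care you take about the legitimacy of the divisions and the well-definedness of $D$ at $\tfrac{\ell+k_j}{A_{1,j}}$ is sound but not a departure from the paper's argument.
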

\begin{proof}
In the present case, condition \eqref{eq:maraton} becomes
$(c_{1,j}v_{1,j} - \delta_{1,j}) \, \ell - \delta_{1,j} \, k_j = 0$ for $\ell \in (\ell_j^-,\ell_j^+)$:
it is satisfied if and only if both $c_{1,j}v_{1,j} =  \delta_{1,j}$ and $k_j=0$.
The former is equivalent to  \eqref{eq:mnk-0}$_1$, the latter is equivalent to \eqref{eq:mnk-0}$_2$ by \eqref{eq:c1jkappa}$_4$, because $\kappa_j=0$.
\end{proof}

We observe that \eqref{eq:mnk-0}$_1$ and \eqref{eq:c1jkappa}$_1$ imply that $c_{1,j} = \delta_{1,j}/v_{1,j} > 0$; therefore \eqref{eq:fjpmGspeed} becomes
\begin{align}\label{eq:linear}
&\ell_j^{\pm}(1- \ell_j^{\pm})=\alpha_{1,j}\, v_{1,j}\, \ell_1^{\pm}(1- \ell_1^{\pm}), && j \in \mathsf{J}.
\end{align}
As a consequence $\rho$ is either non-degenerate or completely degenerate.

Now, we discuss (completely) non-stationary traveling waves by considering separately the (completely) degenerate and non-degenerate case.
We denote
\begin{align*}
&\Delta_j \doteq \left\{ \alpha_{1,j}\, \delta_{1,j}, \,\, \sqrt{\delta_{1,j}}, \,\, \sqrt[3]{\alpha_{1,j}\, \delta^2_{1,j}} \right\},&
&j \in \mathsf{J}.
\end{align*}
\begin{proposition}\label{p:beta2delta}
Assume \eqref{eq:d-lin}.
Problem \eqref{eq:model}-\eqref{eq:Queen} admits a traveling wave that is both (completely) degenerate and (completely) non-stationary if and only if either \eqref{eq:AmonAmarth} holds true or
\begin{equation}
\label{eq:beta-max}
\begin{array}{c}
\begin{aligned}
0 < &v_{1,j} < \min \Delta_j &\text{or}\ & &v_{1,j} > \max \Delta_j,& &j \in \mathsf{J},
\end{aligned}
\\[5pt]
\dfrac{v_{1,2} (\delta_{1,2}-v_{1,2}^2)}{\alpha_{1,2} \, \delta_{1,2}^2-v_{1,2}^3} =
\ldots =
\dfrac{v_{1,n+1} (\delta_{1,n+1}-v_{1,n+1}^2)}{\alpha_{1,n+1} \, \delta_{1,n+1}^2-v_{1,n+1}^3}.
\end{array}
\end{equation}

In the first case, problem \eqref{eq:model}-\eqref{eq:Queen} has infinitely many of such waves; each of them satisfies \eqref{eq:Beardfish} and (up to shifts) \eqref{e:dens_cont}.

In the second case, problem \eqref{eq:model}-\eqref{eq:Queen} has a unique (up to shifts) such wave, which does not satisfy (for no shifts) \eqref{e:dens_cont}. Its end states do not satisfy \eqref{eq:Beardfish} and are
\begin{align}\label{eq:ParadiseLost}
&\ell_1^- = 0 = \ell_j^-,&
&\ell_1^+ = \frac{v_{1,j} (\delta_{1,j}-v_{1,j}^2)}{\alpha_{1,j} \, \delta_{1,j}^2-v_{1,j}^3},&
&\ell_j^+ = \alpha_{1,j} \, \frac{\delta_{1,j} (\delta_{1,j}-v_{1,j}^2)}{\alpha_{1,j} \, \delta_{1,j}^2-v_{1,j}^3},& &j \in \mathsf{J}.
\end{align}
In both cases, any degenerate non-stationary traveling wave $\rho$ has a profile $\varphi$ of the form \eqref{e:phiphij2} with $\psi_h$ defined by \eqref{eq:phi2} and $\sigma_h\in \R$, $h\in\mathsf{H}$, satisfying \eqref{eq:PainOfSalvation}.
\end{proposition}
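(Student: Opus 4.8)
The plan is to specialize Proposition~\ref{p:TheMarsVolta} and Lemma~\ref{l:CNSGreen} to the degenerate sub-case and then solve an elementary algebraic system. First I would recall that for $D(\rho)=\rho$ we have $D_h(0)=0$ and $D_h(1)=\delta_h>0$, so by Theorem~\ref{t:E} a component $\rho_h$ is degenerate if and only if $\ell_h^-=0$; moreover, as observed right after Lemma~\ref{l:CNSGreen}, $c_{1,j}=\delta_{1,j}/v_{1,j}>0$ whenever \eqref{eq:mnk-0}$_1$ holds, so a non-stationary wave is either non-degenerate or completely degenerate, and in the latter case $\ell_h^-=0$ and $c_h=v_h(1-\ell_h^+)>0$ for every $h\in\mathsf{H}$. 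Hence the statement amounts to: there exists $\ell_1^+\in(0,1)$ for which condition $(\mathcal{T}_q)$ of Proposition~\ref{p:TheMarsVolta} holds with $\ell_1^-=0$; once this is settled, the profiles are the shifted fronts of \eqref{e:phiphij2}–\eqref{eq:phi2}.

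Next I would reduce $(\mathcal{T}_q)$ under $\ell_1^-=\ell_j^-=0$. Condition (i) becomes $\ell_1^+\ne1$, which is automatic; the second identity in \eqref{eq:mnk-0} is trivial, while the first reads $\ell_j^+=1-\frac{v_{1,j}^2}{\delta_{1,j}}(1-\ell_1^+)$. Feeding this into \eqref{eq:linear}, namely $\ell_j^+(1-\ell_j^+)=\alpha_{1,j}v_{1,j}\,\ell_1^+(1-\ell_1^+)$, and dividing by $1-\ell_1^+\ne0$ yields the single linear equation
\[
\ell_1^+\,\bigl(\alpha_{1,j}\delta_{1,j}^2-v_{1,j}^3\bigr)=v_{1,j}\bigl(\delta_{1,j}-v_{1,j}^2\bigr),\qquad j\in\mathsf{J}.
\]
If $\alpha_{1,j}\delta_{1,j}^2=v_{1,j}^3$, this forces also $\delta_{1,j}=v_{1,j}^2$, i.e.\ \eqref{eq:AmonAmarth} for that $j$, and leaves $\ell_1^+$ free; otherwise $\ell_1^+$ is pinned to the value $\ell_1^+$ displayed in \eqref{eq:ParadiseLost}. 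I would then note that, once $\ell_1^+\in(0,1)$, the remaining requirements of $(\mathcal{T}_q)$ are automatic: $\ell_j^+=A_{1,j}\ell_1^+>0$ and $\ell_j^+=1-\frac{v_{1,j}^2}{\delta_{1,j}}(1-\ell_1^+)<1$, and since $\ell_j^+(1-\ell_j^+)=\alpha_{1,j}v_{1,j}\ell_1^+(1-\ell_1^+)\le\frac14$, the membership $\ell_1^+\in\mathcal{L}_j^c$ required in (ii) holds.

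Assembling the two cases then gives the conclusion. If \eqref{eq:AmonAmarth} holds for every $j$, then $\ell_1^+\in(0,1)$ is arbitrary and $\ell_j^+=\ell_1^+$, so there are infinitely many such waves, all with coinciding end states, hence \eqref{eq:Beardfish} holds and, up to shifts, so does \eqref{e:dens_cont}. If instead \eqref{eq:beta-max} holds — note that \eqref{eq:beta-max}$_1$ in particular gives $v_{1,j}\ne\sqrt{\delta_{1,j}}$ for all $j$, so \eqref{eq:AmonAmarth} fails for every $j$ — then $\ell_1^+$ must equal the common value prescribed by the formula, which is precisely the content of \eqref{eq:beta-max}$_2$, and the requirement that this value lie in $(0,1)$ is, after a sign analysis of the numerator $v_{1,j}(\delta_{1,j}-v_{1,j}^2)$, the denominator $\alpha_{1,j}\delta_{1,j}^2-v_{1,j}^3$ and their difference $\delta_{1,j}(\alpha_{1,j}\delta_{1,j}-v_{1,j})$ — whose three zeros are exactly the elements of $\Delta_j$ — equivalent to $v_{1,j}<\min\Delta_j$ or $v_{1,j}>\max\Delta_j$, that is \eqref{eq:beta-max}$_1$. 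This produces a unique (up to shifts) wave, with end states \eqref{eq:ParadiseLost}; since generically $\ell_j^+\ne\ell_1^+$, condition \eqref{eq:Beardfish} fails and hence, by Lemma~\ref{l:+cc}\emph{(ii)}, \eqref{e:dens_cont} cannot hold for any shift.

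Finally I would fix the profiles: with $\ell_h^-=0$ they have the form \eqref{e:phiphij2} with $\psi_h$ as in \eqref{eq:phi2}, and imposing \eqref{eq:1DevinTownsend} with $k_j=0$, i.e.\ $\psi_j(\xi+\sigma_j)=A_{1,j}\psi_1(c_{1,j}\xi+\sigma_1)$, and using $A_{1,j}\ell_1^+=\ell_j^+$, $c_{1,j}=\delta_{1,j}/v_{1,j}$ and $\tfrac{v_1}{\delta_1}c_{1,j}=\tfrac{v_j}{\delta_j}$, both the exponential expression and the location of the break-point match if and only if $\tfrac{v_1}{\delta_1}\sigma_1=\tfrac{v_j}{\delta_j}\sigma_j$, which is \eqref{eq:PainOfSalvation} (equivalently \eqref{eq:Anneke2}). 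The main obstacle is the sign/range bookkeeping of the third step — correctly combining the three sign changes of numerator, denominator and their difference so as to recover exactly the dichotomy $v_{1,j}<\min\Delta_j$ or $v_{1,j}>\max\Delta_j$ — together with keeping careful track of which clauses of $(\mathcal{T}_q)$ and of Lemma~\ref{l:CNSGreen} become vacuous once $\ell_1^-=0$.
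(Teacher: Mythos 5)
Your proposal is correct and follows essentially the same route as the paper: both reduce condition $(\mathcal{T}_q)$ with $\ell_h^-=0$ to the linear relation $(\alpha_{1,j}\,\delta_{1,j}^2-v_{1,j}^3)\,\ell_1^+=v_{1,j}(\delta_{1,j}-v_{1,j}^2)$, split according to whether the coefficient vanishes (giving \eqref{eq:AmonAmarth} with $\ell_1^+$ free, or a pinned $\ell_1^+$), and recover \eqref{eq:beta-max} from the constraint $\ell_1^+\in(0,1)$ together with the agreement of the values over $j\in\mathsf{J}$. The only cosmetic differences are that you obtain \eqref{eq:PainOfSalvation} by matching the explicit formulas \eqref{eq:phi2} whereas the paper uses \eqref{e:squares} and the break-point locations, and that your word ``generically'' should be ``always'': \eqref{eq:beta-max}$_1$ excludes $v_{1,j}=\alpha_{1,j}\delta_{1,j}$, so $\ell_j^+=\alpha_{1,j}\delta_{1,j}v_{1,j}^{-1}\ell_1^+\ne\ell_1^+$ in every instance of the second case.
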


\begin{proof}
We claim that the existence of a degenerate non-stationary traveling wave is equivalent to the existence of $\ell_h^+\in (0,1), \, h \in \mathsf{H}$, such that
\begin{align}\label{e:values}
&\ell^+_j=\alpha_{1,j} \, \frac{\delta_{1,j}}{v_{1,j}} \, \ell_1^+&
&\hbox{ and }&
&\left[ \alpha_{1,j} \, \delta_{1,j}^2 - v_{1,j}^3 \right] \ell_1^+
+ v_{1,j} \left[v_{1,j}^2 - \delta_{1,j}\right]  = 0,&
&j\in \mathsf{J}.
\end{align}
In fact, by Proposition~\ref{p:TheMarsVolta} the existence of a non-stationary traveling wave is equivalent to condition ($\mathcal{T}_q$), where \eqref{eq:maraton} can be written as \eqref{eq:mnk-0} by Lemma \ref{l:CNSGreen} and \eqref{eq:fjpmGspeed} as \eqref{eq:linear}.
Then, \eqref{eq:mnk-0} and \eqref{eq:linear} with $\ell_h^-=0$, $h \in \mathsf{H}$, reduce to the relation among the end states
\begin{align}\label{eq:Deftones}
&\frac{v_{1,j}^2}{\delta_{1,j}}=\frac{1-\ell_j^+}{1-\ell_1^+},&
&\ell_j^+ \, (1 - \ell_j^+) = \alpha_{1,j} \, v_{1,j} \, \ell_1^+ \, (1 - \ell_1^+),&
&j \in\mathsf{J}.
\end{align}
By \eqref{eq:Deftones} we obtain $v_{1,j}^2/\delta_{1,j} = \alpha_{1,j} \, v_{1,j} \, \ell_1^+/\ell_j^+$ and then $\eqref{e:values}_1$; by plugging $\eqref{e:values}_1$ into $\eqref{eq:Deftones}_2$ we get $\eqref{e:values}_2$ and then the claim.

Assume there is a degenerate non-stationary traveling wave; then $\ell_1^+$ satisfies \eqref{e:values}$_2$. As a consequence, we have either $\alpha_{1,j}\delta^2_{1,j}- v^3_{1,j}=v_{1,j}^2-\delta_{1,j}=0$ or $\alpha_{1,j}\delta^2_{1,j}\ne v^3_{1,j}$ for every $j \in \mathsf{J}$. The former case is equivalent to  \eqref{eq:AmonAmarth}. In the latter case we can explicitly compute $\ell_1^+$ by $\eqref{e:values}_2$ for any $j \in \mathsf{J}$ and impose the constraint $0<\ell_1^+<1$, namely,
\begin{align*}
0< \dfrac{v_{1,j} \, (\delta_{1,j}-v_{1,j}^2)}{\alpha_{1,j} \, \delta_{1,j}^2-v_{1,j}^3}  < 1.
\end{align*}
A direct computation shows that this is equivalent to \eqref{eq:beta-max}. In conclusion, either condition \eqref{eq:AmonAmarth} or \eqref{eq:beta-max} is necessary for the existence of a non-stationary traveling wave with $\ell_1^-=0$.

Conversely, assume condition \eqref{eq:AmonAmarth}. In this case $\alpha_{1,j} \, \delta_{1,j}^2 =\alpha_{1,j} \, v_{1,j}^4=v_{1,j}^3$. Then \eqref{e:values}$_2$ is trivially satisfied for every $\ell_1^+\in (0,1)$ and from \eqref{e:values}$_1$  we deduce $\ell_1^+=\ell_j^+$. Hence, there is an infinite family of non-stationary traveling waves parameterized by $\ell_1^+ \in (0,1)$ and satisfying \eqref{eq:Beardfish}; as a consequence, they do not coincide up to shifts and they all satisfy (up to shifts) the continuity condition \eqref{e:dens_cont}.

Assume now condition \eqref{eq:beta-max}. In this case the values for $\ell_1^+$ and $\ell_j^+$ in \eqref{eq:ParadiseLost} are well defined since $v^3_{1,j}\ne \alpha_{1,j}\delta_{1,j}$ and they are the unique solution to \eqref{e:values}. In particular, condition {\em (ii)} in Proposition \ref{p:TheMarsVolta} is automatically satisfied. By the estimates in \eqref{eq:beta-max} we have $\ell_1^+,\ell_j^+ \in (0,1)$ for $j \in \mathsf{J}$. Hence, there is a unique (up to shifts) degenerate non-stationary traveling wave and its end states satisfy \eqref{eq:ParadiseLost}. Furthermore, by Lemma \ref{l:+cc}, condition \eqref{e:dens_cont} implies  \eqref{eq:Beardfish}, which is  precluded by \eqref{eq:beta-max}. Hence, the traveling wave does not satisfy \eqref{e:dens_cont}.

At last, by Theorem \ref{t:E}, any solution to \eqref{e:GDlinear} has the form \eqref{e:phiphij2}.
By \eqref{e:squares}, that in the present case becomes
\begin{align*}
\phi_j'(c_j\xi) = \alpha_{1,j} c_{1,j}^2 \phi_1'\left(c_1\xi\right)&
&\hbox{ for a.e.\ }\xi\in\R,\ j \in \mathsf{J},
\end{align*}
and the regularity of $\psi_h$ defined in \eqref{eq:phi2}, we have
\[
\frac{1}{c_j} \left[ \frac{\delta_j}{v_j} \, \ln 2 + \sigma_j \right]
=
\frac{1}{c_1} \left[ \frac{\delta_1}{v_1} \, \ln 2 + \sigma_1 \right],
\]
which is equivalent to \eqref{eq:Anneke2} because $c_{1,j} = \delta_{1,j}/v_{1,j}$.
\end{proof}

The following result treats the non-degenerate case.

\begin{proposition}
Assume \eqref{eq:d-lin}.
Problem \eqref{eq:model}-\eqref{eq:Queen} admits a non-degenerate (completely) non-stationary traveling wave if and only if condition \eqref{eq:AmonAmarth} is satisfied. In this case any non-degenerate non-stationary traveling wave satisfies (up to shifts) \eqref{e:dens_cont}; moreover, it has a profile $\varphi$ of the form \eqref{e:phiphij2} with $\psi_h$ implicitly defined by \eqref{eq:implicitly} and $\sigma_h\in \R$, $h\in\mathsf{H}$, satisfying \eqref{eq:PainOfSalvation}.
\end{proposition}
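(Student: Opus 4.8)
The plan is to specialise condition $(\mathcal{T}_q)$ of Proposition~\ref{p:TheMarsVolta} to \emph{non-degenerate} non-stationary waves. Since here $D(0)=0<D(1)=1$, a non-stationary wave (automatically completely non-stationary, as $m=1$) is non-degenerate if and only if $\ell_1^->0$: by Lemma~\ref{l:prop-deg} with $\is=\emptyset$ it is degenerate exactly when $\ell_h^-=0$ for all $h$, and by \eqref{eq:linear} --- which holds for every non-stationary wave by Proposition~\ref{p:TheMarsVolta} and Lemma~\ref{l:CNSGreen} --- one has $\ell_1^-=0\Leftrightarrow\ell_h^-=0$ for all $h\in\mathsf{H}$. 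Hence it suffices to analyse $(\mathcal{T}_q)$ under the extra requirement $\ell_1^->0$; then \eqref{eq:maraton} becomes \eqref{eq:mnk-0} by Lemma~\ref{l:CNSGreen}, while \eqref{eq:fjpmGspeed} becomes \eqref{eq:linear} because $c_{1,j}=\delta_{1,j}/v_{1,j}>0$.

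For the necessity of \eqref{eq:AmonAmarth}, fix $j\in\mathsf{J}$, set $\beta_j\doteq\alpha_{1,j}v_{1,j}>0$, $s_h\doteq\ell_h^-+\ell_h^+$, $p_h\doteq\ell_h^-\ell_h^+$ (so $p_1>0$ since $\ell_1^->0$), and eliminate the $\ell$'s from the two relations in \eqref{eq:linear} together with \eqref{eq:mnk-0}. Adding the two equations in \eqref{eq:linear} and using $p_j=\beta_jp_1$ (which is \eqref{eq:mnk-0}$_2$) gives $s_j(1-s_j)=\beta_js_1(1-s_1)$; multiplying them and using $p_j=\beta_jp_1$ again gives $1-s_j=\beta_j(1-s_1)$, in the case $\ell_h^+<1$ for all $h$ --- the alternative $\ell_h^+=1$, which by \eqref{eq:linear} occurs for all $h$ at once, follows more quickly from \eqref{eq:mnk-0}$_2$ and one equation of \eqref{eq:linear}. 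Combining these identities and dividing by $\beta_j(1-s_1)\neq0$ --- where $1-s_1\neq0$ is exactly item (i) of $(\mathcal{T}_q)$ and $p_1,\beta_j>0$ legitimise every division --- forces $\beta_j=1$; then $s_j=s_1$, \eqref{eq:mnk-0}$_1$ yields $v_{1,j}^2=\delta_{1,j}$, and $p_j=p_1$ together with $s_j=s_1$ gives $\ell_j^\pm=\ell_1^\pm\doteq\ell^\pm$, i.e. \eqref{eq:Beardfish}. Conversely, assuming \eqref{eq:AmonAmarth}, pick any $\ell^\pm\in[0,1]$ with $0<\ell^-<\ell^+$ and $\ell^-+\ell^+\neq1$ and set $\ell_h^\pm=\ell^\pm$ for all $h$: item (i) of $(\mathcal{T}_q)$ holds by the choice of $\ell^\pm$, item (ii) because $\mathcal{L}_j^c=[0,1]$ when $\alpha_{1,j}v_{1,j}=1$, and, since $\alpha_{1,j}v_{1,j}=1$ and $v_{1,j}^2=\delta_{1,j}$, both \eqref{eq:linear} and \eqref{eq:mnk-0} reduce to trivial identities, so item (iii) holds; Proposition~\ref{p:TheMarsVolta} then produces a non-stationary wave, non-degenerate because $\ell_1^-=\ell^->0$. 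This proves the equivalence.

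For the profile and continuity claims, Theorem~\ref{t:E} shows that a non-degenerate component solving \eqref{e:GDlinear} with $\ell_h^->0$ is, up to a shift, the function $\psi_h$ implicitly given by \eqref{eq:implicitly}, so $\varphi$ has the form \eqref{e:phiphij2}. Since by the necessity step the end states all coincide and $c_h=v_h(1-\ell^--\ell^+)$, integrating \eqref{e:GDlinear} with the normalisation $\psi_h(0)=(\ell^-+\ell^+)/2$ writes $\psi_h(\xi)=\bar\psi(v_h\xi/\delta_h)$ for one common strictly increasing function $\bar\psi$. Using $\alpha_{1,j}v_{1,j}=1$ (hence $\alpha_{1,j}c_1=c_j$ and $\alpha_{1,j}v_1=v_j$), the flux relation \eqref{eq:Tool} collapses to $\varphi_j(c_jt)=\varphi_1(c_1t)$, which by injectivity of $\bar\psi$ means $\frac{v_j}{\delta_j}(c_jt+\sigma_j)=\frac{v_1}{\delta_1}(c_1t+\sigma_1)$ for all $t$; equating the coefficient of $t$ is the already known identity $v_{1,j}^2=\delta_{1,j}$, while equating the constant terms is $v_{1,j}\sigma_1=\delta_{1,j}\sigma_j$, i.e. \eqref{eq:PainOfSalvation} (equivalently \eqref{eq:Anneke2}). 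Thus \eqref{eq:Queen} forces the shifts to satisfy \eqref{eq:PainOfSalvation}, and then \eqref{e:cont_profiles}, hence \eqref{e:dens_cont}, holds. Alternatively, once $\ell_j^\pm=\ell_1^\pm$ is known, one may note that $t\mapsto\varphi_h(c_ht)$ solves the autonomous ODE $u'=c_h\gamma_h(u)$ with $c_1\gamma_1\equiv c_j\gamma_j$ by \eqref{eq:AmonAmarth} and the same initial value forced by \eqref{eq:Tool}, and invoke uniqueness, which is valid since $\gamma_h$ is smooth on $[\ell^-,\ell^+]$ because $\ell^->0$.

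I expect the main obstacle to be the algebraic elimination in the necessity part: extracting $\beta_j=1$ cleanly from \eqref{eq:linear}--\eqref{eq:mnk-0} and disposing of the boundary sub-case $\ell_h^+=1$, while checking that every division performed is legitimate --- this is precisely where non-stationarity ($s_1\neq1$) and non-degeneracy ($\ell_1^->0$) enter. The remaining steps, reducing to the explicit profiles \eqref{eq:implicitly} and tracking the shifts through \eqref{eq:Tool}, are routine and parallel Propositions~\ref{p:GGfinal} and \ref{p:beta2delta}.
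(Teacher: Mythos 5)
Your proof is correct and follows the same overall skeleton as the paper's (specialise $(\mathcal{T}_q)$, reduce to \eqref{eq:linear} and \eqref{eq:mnk-0} via Lemma~\ref{l:CNSGreen}, eliminate the end states to force \eqref{eq:Beardfish} and \eqref{eq:AmonAmarth}, then treat profiles and shifts), but two sub-steps are executed differently. In the elimination you add and multiply the two relations of \eqref{eq:linear} and work with the symmetric functions $s_h,p_h$, which obliges you to split off the boundary sub-case $\ell_h^+=1$ (you dispose of it correctly); the paper instead divides \eqref{eq:linear} by \eqref{eq:mnk-0}$_2$ to get $(1-\ell_j^+)\ell_1^-=(1-\ell_1^+)\ell_j^-$ and $(1-\ell_j^-)\ell_1^+=(1-\ell_1^-)\ell_j^+$ and adds, which needs no case split --- both routes use non-degeneracy ($\ell_1^->0$) and non-stationarity ($s_1\ne1$) at exactly the same places. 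More substantively, for the continuity and shift claims the paper invokes Remark~\ref{rem:AmonAmarth} for \eqref{e:dens_cont} and then extracts \eqref{eq:PainOfSalvation} by dividing the two instances of the implicit formula \eqref{eq:implicitly} and taking logarithms, whereas you derive \eqref{e:cont_profiles} directly from the flux coupling \eqref{eq:Tool}: under \eqref{eq:AmonAmarth} one has $\alpha_{1,j}c_1=c_j$ and $\alpha_{1,j}g_1=g_j$, so \eqref{eq:Tool} collapses to $\phi_j(c_jt)=\phi_1(c_1t)$, and the scaling $\psi_h(\xi)=\bar\psi(v_h\xi/\delta_h)$ plus strict monotonicity of $\bar\psi$ yields \eqref{eq:PainOfSalvation} by matching the linear and constant coefficients. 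Your route has the advantage of proving that \emph{every} such wave satisfies \eqref{e:dens_cont} as a direct consequence of \eqref{eq:Queen}, rather than going through an existence remark; the paper's route stays closer to the explicit formula \eqref{eq:implicitly} that the statement advertises. Your preliminary observation that non-degeneracy is equivalent to $\ell_1^->0$ (via Lemma~\ref{l:prop-deg} and the equivalence $\ell_1^-=0\Leftrightarrow\ell_h^-=0$ for all $h$ coming from \eqref{eq:linear}) is also sound and is only implicit in the paper.
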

\begin{proof}
Assume that there is a non-degenerate non-stationary traveling wave; 
then $\ell_h^- \ne 0$ and $1 \ne \ell_h^+ + \ell_h^-$, $h \in \mathsf{H}$. Moreover, by Proposition~\ref{p:TheMarsVolta}, condition ($\mathcal{T}_q$) is satisfied, where \eqref{eq:maraton} becomes \eqref{eq:mnk-0} by Lemma \ref{l:CNSGreen} and \eqref{eq:fjpmGspeed} is \eqref{eq:linear}.
When dividing \eqref{eq:linear} by \eqref{eq:mnk-0}$_2$ we obtain
\begin{align*}
&(1-\ell_j^+) \, \ell_1^- = (1-\ell_1^+) \, \ell_j^-&
&\hbox{ and }&
&(1-\ell_j^-) \, \ell_1^+ = (1-\ell_1^-) \, \ell_j^+,&
&j \in \mathsf{J}.
\end{align*}
By adding the above relations we have $\ell_1^- + \ell_1^+ = \ell_j^- + \ell_j^+$, hence
\[
0 = \ell_1^+ - \ell_j^+ + \ell_1^- - \ell_j^-
= \ell_1^+ - 1 + (1-\ell_1^+) \, \frac{\ell_j^-}{\ell_1^-} + \ell_1^- - \ell_j^-
= \frac{1-\ell_1^+-\ell_1^-}{\ell_1^-} \, (\ell_j^- - \ell_1^-).
\]
It is now easy to conclude that \eqref{eq:Beardfish} is satisfied and then also \eqref{eq:AmonAmarth} holds true by \eqref{eq:mnk-0}. At last, the traveling wave satisfies (up to shifts) \eqref{e:dens_cont} by Remark \ref{rem:AmonAmarth}.

Conversely, assume \eqref{eq:AmonAmarth}. 
Then \eqref{eq:mnk-0} and \eqref{eq:linear} write
\begin{align*}
&\ell_j^+ + \ell_j^- = \ell_1^+ + \ell_1^-, &
&\ell_j^- \, \ell_j^+ =\ell_1^- \, \ell_1^+, &
&\ell_j^\pm(1- \ell_j^\pm)=\ell_1^\pm(1- \ell_1^\pm),&& j \in \mathsf{J}.
\end{align*}
The same computations as before give that if we impose $\ell_h^- \ne 0$ and $1 \ne \ell_h^+ + \ell_h^-$, $h \in \mathsf{H}$, then the above conditions are equivalent to \eqref{eq:Beardfish}; the existence of infinitely many non-degenerate non-stationary traveling waves satisfying \eqref{e:dens_cont} easily follows.

At last, by Theorem \ref{t:E}, any solution to \eqref{e:GDlinear} has the form \eqref{e:phiphij2}.
Fix $j \in \mathsf{J}$. 
By \eqref{e:dens_cont} we have \eqref{e:cont_profiles}, namely
\begin{align*}
&\psi_j(c_jt+\sigma_j) = \psi_1(c_1t+\sigma_1),&&t \in \R.
\end{align*}
This identity together with \eqref{eq:implicitly} and \eqref{eq:Beardfish} imply
\begin{align*}
&\left[
2 \exp\left( \dfrac{v_j}{\delta_j} \, (c_jt+\sigma_j) \right) \frac{\psi_1(c_1t+\sigma_1)-\ell^-}{\ell^+-\ell^-}
\right]^{\ell^-}
=
\left[
2 \exp\left( \dfrac{v_j}{\delta_j} \, (c_jt+\sigma_j) \right) \frac{\ell^+-\psi_1(c_1t+\sigma_1)}{\ell^+-\ell^-}
\right]^{\ell^+},
\\
&\left[
2 \exp\left( \dfrac{v_1}{\delta_1} \, (c_1t+\sigma_1) \right) \frac{\psi_1(c_1t+\sigma_1)-\ell^-}{\ell^+-\ell^-}
\right]^{\ell^-}
=
\left[
2 \exp\left( \dfrac{v_1}{\delta_1} \, (c_1t+\sigma_1) \right) \frac{\ell^+-\psi_1(c_1t+\sigma_1)}{\ell^+-\ell^-}
\right]^{\ell^+}.
\end{align*}
By dividing the above equalities and taking the logarithm we get
\begin{align*}
&\left[ \dfrac{v_j}{\delta_j} \, (c_jt+\sigma_j) - \dfrac{v_1}{\delta_1} \, (c_1t+\sigma_1) \right] \ell^- = \left[ \dfrac{v_j}{\delta_j} \, (c_jt+\sigma_j) - \dfrac{v_1}{\delta_1} \, (c_1t+\sigma_1) \right] \ell^+,&&t \in \R.
\end{align*}
Since $\ell^- \ne \ell^+$ and $c_{1,j} = \delta_{1,j}/v_{1,j}$, the above equality is equivalently to \eqref{eq:PainOfSalvation}.
\end{proof}


\section{Application to the case of a logarithmic flux, \texorpdfstring{$m=1$}{}}\label{s:log-fl}

In this section we assume \eqref{e:proportionality} for some constants $v_h,\delta_h>0$,  $D \doteq 1$ and the logarithmic flux \cite{Greenberg} defined by
\[
f(\rho) \doteq -\rho\ln(\rho)
\]
for $\rho\in(0,1]$ with $f(0)=0$ by continuity; in the following we simply write $\rho \ln (\rho)$ for $\rho \in [0,1]$. We use the notation introduced in \eqref{eq:vdelta}; then, in the present case the diffusivity $D_h$ coincides with the anticipation length $\delta_h$ of \cite{BTTV}, see Section \ref{s:m}. As in Section \ref{s:Greenshields}, we focus on the case $m=1$ and do not mention in the following these assumptions on $f_h$, $D_h$ and $m$.  

Condition \eqref{eq:ch} becomes
\begin{align}\label{eq:chlog}
&0\le\ell_h^- < \ell_h^+\le1&
&\hbox{ and }&
&c_h=-v_h\frac{\ell^+_h\ln(\ell^+_h)-\ell^-_h\ln(\ell^-_h)}{\ell^+_h-\ell^-_h}.
\end{align}
Moreover we have, for $h \in \mathsf{H}$,
\begin{align}\label{eq:ghlog}
&g_{h}(\ell)=
v_h\ell \left[\frac{\ell^+_h\ln(\ell^+_h)-\ell^-_h\ln(\ell^-_h)}{\ell^+_h-\ell^-_h} - \ln(\ell)\right],
\\\nonumber
&g_{h}(\ell) - g_h(\ell_h^{\pm})=
v_h \left[ \frac{(\ell-\ell_h^-)\ell^+_h\ln(\ell^+_h)+(\ell_h^+-\ell)\ell^-_h\ln(\ell^-_h)}{\ell^+_h-\ell^-_h} - \ell\ln(\ell)\right].
\end{align}
Therefore \eqref{eq:Lorenzo4} becomes
\begin{align}\label{eq:equationlog}
&\phi_h'(\xi)=\frac{v_h}{\delta_h}
\left[ \frac{\left[\phi_h(\xi) - \ell_h^-\right] \ell^+_h \ln(\ell^+_h) + \left[\ell_h^+ - \phi_h(\xi)\right] \ell^-_h \ln(\ell^-_h)}{\ell^+_h - \ell^-_h}
- \phi_h(\xi) \ln\left(\phi_h(\xi)\right) \right],
\end{align}
for $\xi \in \R$. Let $\EL:[0,e^{-1}] \to [0,e^{-1}]$ and $\ER:[0,e^{-1}] \to [e^{-1},1]$ be the inverse functions of the restrictions $f_\ell$ and $f_r$ of $f$ to $[0, e^{-1}]$ and $[e^{-1}, 1]$, respectively.

We first consider the case of stationary waves. We define the intervals
\begin{align*}
&\mathcal{L}_j^0\doteq\begin{cases}
[0, e^{-1})&\text{if }\alpha_{1,j} \, v_{1,j} \le 1,
\\
\left[0, \EL(e^{-1}\alpha_{1,j}^{-1} \, v_{1,j}^{-1}) \right)&\text{if }\alpha_{1,j} \, v_{1,j} > 1,
\end{cases}&&j \in \mathsf{J}.
\end{align*}

\begin{proposition}
Problem \eqref{eq:model}-\eqref{eq:Queen} admits infinitely many stationary traveling waves; their end states are characterized by the conditions
\begin{align*}
&\ell_1^- \in \bigcap_{j \in \mathsf{J}}\mathcal{L}_j^0,&
&\ell_1^+ = \ER\left(-\ell_1^-\ln (\ell_1^-)\right),\\
&\ell_j^- = \EL\left(-\alpha_{1,j} \, v_{1,j}\ell_1^-\ln (\ell_1^-) \right),&
&\ell_j^+ = \ER\left(-\alpha_{1,j} \, v_{1,j}\ell_1^-\ln (\ell_1^-)\right),&
&j \in \mathsf{J}.
\end{align*}
Moreover, up to shifts, any stationary traveling wave satisfies \eqref{e:dens_cont}.
\end{proposition}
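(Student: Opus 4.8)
The plan is to specialise Theorem~\ref{t:1} and Proposition~\ref{rem:SC} to the present data ($f(\rho)=-\rho\ln\rho$, $D\equiv1$, $m=1$) and then solve the resulting algebraic system explicitly via the inverse branches $\EL$, $\ER$. I would first recall that, by Theorem~\ref{t:1}, the stationary traveling waves are exactly those whose end states satisfy \eqref{basic}, and rewrite \eqref{basic} in the current framework. With $m=1$ and $f_h=v_hf$, the first group of conditions $f_h(\ell_h^+)=f_h(\ell_h^-)$ reduces to $f(\ell_h^+)=f(\ell_h^-)$ for each $h\in\mathsf{H}$; since $f$ is strictly increasing on $[0,e^{-1}]$, strictly decreasing on $[e^{-1},1]$ and $\ell_h^-<\ell_h^+$, this is equivalent to $\ell_h^-\in[0,e^{-1})$ together with $\ell_h^+=\ER\bigl(f(\ell_h^-)\bigr)$, so in particular $\ell_h^+\in(e^{-1},1]$. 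The coupling condition $f_j(\ell_j^-)=\alpha_{1,j}f_1(\ell_1^-)$ becomes $f(\ell_j^-)=\alpha_{1,j}v_{1,j}f(\ell_1^-)=-\alpha_{1,j}v_{1,j}\,\ell_1^-\ln\ell_1^-$; a solution $\ell_j^-\in[0,e^{-1})$ exists --- and is then uniquely $\EL\bigl(-\alpha_{1,j}v_{1,j}\,\ell_1^-\ln\ell_1^-\bigr)$, with $\ell_j^+=\ER\bigl(-\alpha_{1,j}v_{1,j}\,\ell_1^-\ln\ell_1^-\bigr)$ --- precisely when $0\le\alpha_{1,j}v_{1,j}f(\ell_1^-)<e^{-1}$. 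Since $f(\ell_1^-)<e^{-1}$ automatically, this is vacuous when $\alpha_{1,j}v_{1,j}\le1$ and equivalent to $\ell_1^-<\EL\bigl(e^{-1}\alpha_{1,j}^{-1}v_{1,j}^{-1}\bigr)$ when $\alpha_{1,j}v_{1,j}>1$, i.e.\ it is exactly the requirement $\ell_1^-\in\mathcal{L}_j^0$. Intersecting over $j\in\mathsf{J}$ yields the stated parametrisation of the end states.

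Next I would check multiplicity: $\bigcap_{j\in\mathsf{J}}\mathcal{L}_j^0$ is an interval $[0,a)$ with $a>0$ (namely $a=e^{-1}$, or $a=\min_{j\in\mathsf{J}}\EL(e^{-1}\alpha_{1,j}^{-1}v_{1,j}^{-1})$ if some $\alpha_{1,j}v_{1,j}>1$), hence contains infinitely many admissible values of $\ell_1^-$. For each such value, Theorem~\ref{t:E} produces in every road a profile $\phi_h$ with the prescribed end states, unique up to translation, and --- since in the stationary case \eqref{eq:Tool} is equivalent to the second condition in \eqref{basic}, as observed in the proof of Theorem~\ref{t:1} --- the tuple of these profiles is a stationary traveling wave. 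Distinct $\ell_1^-$ give distinct end states, hence genuinely distinct waves, so there are infinitely many of them.

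For the last assertion I would argue as follows. Since $\ell_h^-<e^{-1}<\ell_h^+$ for every $h\in\mathsf{H}$, the point $e^{-1}$ lies in $\bigcap_{h\in\mathsf{H}}(\ell_h^-,\ell_h^+)$, so $S\ne\emptyset$ in the notation of Proposition~\ref{rem:SC}. Given any stationary traveling wave, $D\equiv1$ makes it non-degenerate (neither (i) nor (ii) of Theorem~\ref{t:E} can occur), so each $\phi_h$ is a strictly increasing continuous bijection of $\R$ onto $(\ell_h^-,\ell_h^+)$ and there is a unique $\xi_h$ with $\phi_h(\xi_h)=e^{-1}$; replacing $\phi_h$ by the translate $\xi\mapsto\phi_h(\xi+\xi_h)$ leaves it a stationary traveling wave (since $c_h=0$, the parabolic flux along the profile is constant, so the node condition \eqref{eq:Queen} is unaffected by translations) and normalises $\phi_h(0)=e^{-1}$ for all $h$. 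Then \eqref{e:cont_profiles}, with all $c_h=0$, yields $\phi_j(c_jt)=e^{-1}=\phi_i(c_it)$, i.e.\ \eqref{e:dens_cont} holds. I do not expect a genuine obstacle here; the only care needed is the bookkeeping of the endpoint case $\ell_h^-=0$ (which forces $\ell_h^+=1$, consistent with $\EL(0)=0$, $\ER(0)=1$) and the strict-monotonicity/inverse-function manipulations that turn $f(\ell_j^-)=\alpha_{1,j}v_{1,j}f(\ell_1^-)$ into the explicit formulas and into the description of $\mathcal{L}_j^0$.
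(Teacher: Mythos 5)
Your proposal is correct and follows essentially the same route as the paper: specialize Theorem~\ref{t:1} to $f(\rho)=-\rho\ln\rho$, solve the resulting conditions $f(\ell_h^-)=f(\ell_h^+)$ and $f(\ell_j^-)=\alpha_{1,j}v_{1,j}f(\ell_1^-)$ via the inverse branches $\EL,\ER$ (with $\mathcal{L}_j^0$ encoding exactly the solvability constraint), and obtain the continuity condition from Proposition~\ref{rem:SC} since $e^{-1}\in S=\bigcap_{h\in\mathsf{H}}(\ell_h^-,\ell_h^+)$. Your extra bookkeeping (non-degeneracy from $D\equiv1$, invariance of \eqref{eq:Queen} under shifts because the parabolic flux is constant along stationary profiles) only makes explicit what the paper leaves to the reader.
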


\begin{proof}
The first part of the proposition follows from Theorem \ref{t:1}.
Indeed, conditions \eqref{eq:ch}$_1$ and  \eqref{basic} are satisfied if and only if for any $h\in\mathsf{H}$ and $j\in\mathsf{J}$ 
\begin{align*}
&\ell_h^- \in [0, e^{-1}),& &\ell_h^-\ln(\ell_h^-)=\ell_h^+\ln(\ell_h^+),&
&\ell_j^- \, \ln( \ell_j^-)= \alpha_{1,j} \, v_{1,j} \, \ell_1^- \, \ln( \ell_1^-).
\end{align*}
Hence $\ell_1^+=\ER\left(-\ell_1^-\ln(\ell_1^-)\right)$ and it is sufficient to determine $\ell_j^\pm$.  Observe that the definition of $\mathcal{L}_j^0$ guarantees that they can be uniquely computed. 
At last, the latter part of the proposition follows by the proof of Proposition \ref{rem:SC} since $e^{-1} \in \emph{S} \doteq \bigcap_{h\in\mathsf{H}}(\ell_h^-,\ell_h^+) \ne \emptyset$.
\end{proof}

In the following  we discuss the existence of non-stationary traveling waves. Since $m=1$, by Lemma \ref{l:cj=0} this is equivalent to assume that  the traveling wave is completely non-stationary. By \eqref{eq:chlog}$_2$ we deduce
\begin{equation}\label{eq:c1jlog}
c_{1,j} = v_{1,j} ~ \frac{\ell_1^+\ln(\ell_1^+)-\ell_1^-\ln(\ell_1^-)}{\ell_j^+\ln(\ell_j^+)-\ell_j^-\ln(\ell_j^-)} ~
\frac{\ell_j^+-\ell_j^-}{\ell_1^+-\ell_1^-}.
\end{equation}
The following result translates Theorem \ref{thm:1} to the current framework. We define the intervals
\begin{align*}
&\mathcal{L}_j^c\doteq\begin{cases}
[0,1]&\text{if }\alpha_{1,j} \, v_{1,j} \le 1,
\\
[0,1] \setminus \left(\EL(e^{-1}\alpha_{1,j}^{-1} \, v_{1,j}^{-1}) , \, \ER(e^{-1}\alpha_{1,j}^{-1} \, v_{1,j}^{-1})\right) &\text{if }\alpha_{1,j} \, v_{1,j} > 1,
\end{cases}&&j \in \mathsf{J}.
\end{align*}

\begin{proposition}\label{p:TheMarsVoltaLOG}
Problem \eqref{eq:model}-\eqref{eq:Queen} admits a (completely) non-stationary traveling wave if and only if the following condition holds.
\begin{itemize}
\item[($\mathcal{T}_l$)]  There exist $\ell_1^{\pm}\in [0,1]$ with $\ell_1^-<\ell_1^+$ such that:
\begin{enumerate}

\item[(i)]  $\ell_1^-\ln(\ell_1^-)\ne \ell_1^+ \ln(\ell_1^+)$;

\item[(ii)] $\ell_1^\pm \in \bigcap_{j \in \mathsf{J}}\mathcal{L}_j^c$;

\item[(iii)] for any $j \in \mathsf{J}$ we have
\begin{align}\label{eq:maratonLOG}
&\delta_{1,j} \left[ g_j(\ell)- g_j(\ell_j^-) \right] = A_{1,j} \, c_{1,j} \left[ g_1\left( \frac{\ell+k_j}{A_{1,j}}\right)-g_1(\ell_1^-) \right],&
&\ell \in (\ell^-_j,\,\ell^+_j),
\end{align}
where $g_h$ is given in \eqref{eq:ghlog}, $c_{1,j}$ in \eqref{eq:c1jlog}, $A_{1,j}$ in \eqref{eq:kappaj}$_2$ and $k_j$ in \eqref{eq:kappaj}$_3$, with $\ell_j^\pm$ being solutions to
\begin{equation}\label{eq:fjpmlog}
\ell_j^{\pm}\ln(\ell_j^{\pm})=\alpha_{1,j}v_{1,j}L_{1,j}^{\pm}\ln(L_{1,j}^{\pm}).
\end{equation}
\end{enumerate}
\end{itemize}
\end{proposition}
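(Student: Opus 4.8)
The plan is to show that, in the present logarithmic setting, condition $(\mathcal{T})$ of Theorem~\ref{thm:1} is equivalent to $(\mathcal{T}_l)$, following the same three-step scheme used in the proof of Proposition~\ref{p:TheMarsVolta}. Since $m=1$ we have $\ins\ne\emptyset$ if and only if $c_1\ne0$, and by \eqref{eq:chlog} this happens precisely when $\ell_1^-\ln(\ell_1^-)\ne\ell_1^+\ln(\ell_1^+)$; this yields at once the equivalence of the first items of $(\mathcal{T})$ and $(\mathcal{T}_l)$.

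For the second items, fix $j\in\mathsf{J}$. With $m=1$, condition \eqref{eq:fjpm} is exactly \eqref{eq:fjpmlog}, i.e.\ $f(\ell_j^\pm)=\alpha_{1,j}v_{1,j}f(L_{1,j}^\pm)$ with $f(\rho)=-\rho\ln(\rho)$. Since $f$ is strictly concave with $f(0)=f(1)=0$ and $\max_{[0,1]}f=f(e^{-1})=e^{-1}$, the equation $f(\ell_j)=y$ has a solution $\ell_j\in[0,1]$ if and only if $0\le y\le e^{-1}$, and in that case the solutions are recovered through $\EL$ and $\ER$ (exactly as in \eqref{eq:solfjpmGspeed}, the two sign cases $c_j>0$ and $c_j<0$ determining which of $\ell_j^-,\ell_j^+$ carries the branch $\EL$ or $\ER$). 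Because $\{L_{1,j}^-,L_{1,j}^+\}=\{\ell_1^-,\ell_1^+\}$ by \eqref{e:Lij}, the solvability requirement $\alpha_{1,j}v_{1,j}f(L_{1,j}^\pm)\le e^{-1}$ is symmetric in $\ell_1^-\leftrightarrow\ell_1^+$ and reads $f(\ell_1^\pm)\le e^{-1}(\alpha_{1,j}v_{1,j})^{-1}$; this is automatic when $\alpha_{1,j}v_{1,j}\le1$ and is equivalent to $\ell_1^\pm\notin\bigl(\EL(e^{-1}\alpha_{1,j}^{-1}v_{1,j}^{-1}),\ER(e^{-1}\alpha_{1,j}^{-1}v_{1,j}^{-1})\bigr)$ when $\alpha_{1,j}v_{1,j}>1$, that is, $\ell_1^\pm\in\mathcal{L}_j^c$. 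Finally $f_j(\ell_j^-)\ne f_j(\ell_j^+)$ follows from $f(L_{1,j}^-)\ne f(L_{1,j}^+)$, which is precisely the first item already dealt with. This proves the equivalence of the second items; when they hold, one selects $\ell_j^-<\ell_j^+$ among the (at most four) admissible pairs as in Figure~\ref{fig:4cases}.

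For the third item, assume the first two items of $(\mathcal{T})$ and specialize \eqref{eq:RATM}. With $m=1$, $\ins=\{1\}$ and $D_h\equiv\delta_h$ (as $D\equiv1$), \eqref{eq:RATM} reads $\delta_j^{-1}[g_j(\ell_j(c_j\xi))-g_j(\ell_j^-)]=A_{1,j}c_{1,j}\delta_1^{-1}[g_1(\phi_1(c_1\xi))-g_1(\ell_1^-)]$ for a.e.\ $\xi\in\R$; multiplying by $\delta_1$ and using $\delta_1/\delta_j=\delta_{1,j}$ gives $\delta_{1,j}[g_j(\ell_j(c_j\xi))-g_j(\ell_j^-)]=A_{1,j}c_{1,j}[g_1(\phi_1(c_1\xi))-g_1(\ell_1^-)]$. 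By \eqref{e:elljn} we have $\phi_1(c_1\xi)=(\ell_j(c_j\xi)+k_j)/A_{1,j}$, and by Lemma~\ref{l:phij} the map $\xi\mapsto\ell_j(c_j\xi)$ is continuous and increasing with range covering $(\ell_j^-,\ell_j^+)$; substituting $\ell=\ell_j(c_j\xi)$ produces exactly \eqref{eq:maratonLOG} on $(\ell_j^-,\ell_j^+)$ (continuity of $g_1,g_j$ removing the a.e.\ restriction). The converse implication is obtained by tracing this chain of equivalences backwards: define $\phi_j\doteq\ell_j$ with $\ell_j$ as in \eqref{e:elljn}, use Lemma~\ref{l:phij} to get $\ell_j^-<\ell_j^+$, monotonicity and the boundary values, recover \eqref{eq:Lorenzo4} for $\phi_j$ from \eqref{eq:maratonLOG} together with \eqref{eq:Lorenzo4} for $\phi_1$, and conclude via Proposition~\ref{p:expl-profiles}, exactly as in the last step of the proof of Theorem~\ref{thm:1}.

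I expect the main obstacle to be the careful bookkeeping in the second item: the two sign cases for $c_j$ determine which of $\ell_j^-,\ell_j^+$ picks up the branch $\EL$ or $\ER$, and one must check that the solvability condition is genuinely symmetric in $\ell_1^-$ and $\ell_1^+$, so that it collapses to the single interval condition $\ell_1^\pm\in\mathcal{L}_j^c$, while verifying that the resulting $\ell_j^\pm$ lie in $[0,1]$ with $\ell_j^-<\ell_j^+$ and realize $f_j(\ell_j^-)\ne f_j(\ell_j^+)$. Everything else is a routine transcription, with $f$ and $g_h$ from \eqref{eq:ghlog} in place of their quadratic counterparts, of the arguments already carried out for the Greenshields flux.
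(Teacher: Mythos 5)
Your proposal is correct and follows essentially the same route as the paper: both proofs reduce the statement to checking that condition $(\mathcal{T})$ of Theorem~\ref{thm:1} is equivalent to $(\mathcal{T}_l)$, with the key step being the substitution $\ell=\ell_j(c_j\xi)$ (using that $\phi_1$, hence $\ell_j$, is increasing with the right limits by Theorem~\ref{t:E} and Lemma~\ref{l:phij}) to convert \eqref{eq:RATM} into \eqref{eq:maratonLOG}. The only difference is that you spell out the equivalence of the first two items via the branches $\EL$, $\ER$ and the solvability threshold $e^{-1}(\alpha_{1,j}v_{1,j})^{-1}$, which the paper dismisses as clear; your bookkeeping there is accurate.
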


\begin{proof} The proof consists in showing that, in the present case, ($\mathcal{T}$) of Theorem \ref{thm:1} is equivalent to ($\mathcal{T}_l$). The first two items in  ($\mathcal{T}$) and  ($\mathcal{T}_l$) are clearly equivalent.
 It remains to discuss the third one. Condition \eqref{eq:RATM} is equivalent to 
\begin{align}\label{eq:CSlog}
&\delta_{1,j}\left[g_j\left(\ell_j(c_j\xi)\right)- g_j(\ell_j^-)\right] = A_{1,j} \, c_{1,j} \left[ g_1\left(\phi_1(c_1\xi)\right)-g_1(\ell_1^-)\right],&
&\xi \in \R,
\end{align}
where $\phi_1$ is a solution to \eqref{e:infty}-\eqref{eq:equationlog} and $\ell_j(\xi) \doteq A_{1,j}\phi_1(c_{1,j}\xi)-k_j$ for $c_{1,j}$ in \eqref{eq:c1jlog}, $A_{1,j}$ in \eqref{eq:kappaj}$_2$ and $k_j$ in \eqref{eq:kappaj}$_3$. By Theorem \ref{t:E}, $\phi_1$ is strictly increasing and so is the function $\ell_j$. Put $\ell \doteq \ell_j(c_j\xi)$. Hence $\ell \in (\ell_j^-, \, \ell_j^+)$, by Lemma \ref{l:phij}, and then \eqref{eq:CSlog} is equivalent to \eqref{eq:maratonLOG}.
\end{proof}

In the following we focus on the case of (completely) non-stationary traveling waves with $\ell_h^-=0$ for some $h \in \mathsf{H}$.

\begin{lemma}\label{lem:special}
Assume that problem \eqref{eq:model}-\eqref{eq:Queen} admits a traveling wave. The following statements are equivalent:
\begin{enumerate}
\item[(i)]
$\ell_1^- = 0$;
\item[(ii)]
$\ell_j^- = 0$ for all $j \in \mathsf{J}$;
\item[(iii)]
there exists ${\rm j} \in \mathsf{J}$ such that $\ell_{\rm j}^- = 0$.
\end{enumerate}
\end{lemma}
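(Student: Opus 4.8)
The plan is to prove the equivalence of (i), (ii) and (iii) for the logarithmic flux via the algebraic relation \eqref{eq:fjpmlog}, exactly in the spirit of the proofs of Lemma \ref{l:cj=0} and Lemma \ref{lem:special}'s "quadratic cousin" (the structure mirrors Lemma \ref{l:cj=0}: a single algebraic identity linking the ingoing and outgoing end states forces all the relevant quantities to vanish simultaneously). Since $m=1$ here, the core identity to exploit is $\eqref{eq:fjpmlog}$, i.e. $\ell_j^\pm\ln(\ell_j^\pm)=\alpha_{1,j}v_{1,j}L_{1,j}^\pm\ln(L_{1,j}^\pm)$, valid for every $j\in\mathsf J$, together with the fact (from Proposition \ref{p:Tool}, specialized to $m=1$, or directly from \eqref{eq:fjpm}) that $L_{1,j}^-$ is whichever of $\ell_1^-,\ell_1^+$ realizes $\min\{f_1(\ell_1^-),f_1(\ell_1^+)\}=\min\{-\ell_1^-\ln\ell_1^-,-\ell_1^+\ln\ell_1^+\}$, and similarly $L_{1,j}^+$ realizes the max.

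First I would observe that $-\rho\ln\rho=0$ exactly when $\rho\in\{0,1\}$, and that $f(\rho)=-\rho\ln\rho>0$ for $\rho\in(0,1)$; in particular, for $\rho\in[0,1]$ the value $-\rho\ln\rho$ equals $0$ if and only if $\rho=0$ or $\rho=1$. The key point is that $\min\{f(\ell_h^-),f(\ell_h^+)\}=0$ forces $\ell_h^-=0$ (recall $\ell_h^-<\ell_h^+$, so the minimizing endpoint of $\{f(\ell_h^-),f(\ell_h^+)\}$, when it is the left one, being zero means $\ell_h^-\in\{0,1\}$, hence $\ell_h^-=0$; it cannot be the right endpoint $\ell_h^+$ unless $\ell_h^+=1$, but then $f(\ell_h^+)=0$ would force $f(\ell_h^-)\le 0$, impossible unless $\ell_h^-=0$ as well). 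So: for each $h\in\mathsf H$, $\min\{f(\ell_h^-),f(\ell_h^+)\}=0$ if and only if $\ell_h^-=0$.

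Next I would run the equivalence. For (i) $\Rightarrow$ (ii): if $\ell_1^-=0$, then $\min\{f(\ell_1^-),f(\ell_1^+)\}=0$; by Lemma \ref{l:minmax} (formula \eqref{eq:minfj}, with the proportionality \eqref{e:proportionality}$_1$ so that $f_h=v_hf$) we get, for every $j\in\mathsf J$, $\min\{f_j(\ell_j^-),f_j(\ell_j^+)\}=\alpha_{1,j}v_1\min\{f(\ell_1^-),f(\ell_1^+)\}=0$, hence $\min\{f(\ell_j^-),f(\ell_j^+)\}=0$, hence $\ell_j^-=0$ by the observation above. The implication (ii) $\Rightarrow$ (iii) is trivial since $\mathsf J\neq\emptyset$. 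For (iii) $\Rightarrow$ (i): if $\ell_{\rm j}^-=0$ for some ${\rm j}$, then $\min\{f_{\rm j}(\ell_{\rm j}^-),f_{\rm j}(\ell_{\rm j}^+)\}=0$; since all $\alpha_{1,j}>0$ and $v_1>0$, \eqref{eq:minfj} with $j={\rm j}$ gives $\alpha_{1,{\rm j}}v_1\min\{f(\ell_1^-),f(\ell_1^+)\}=0$, hence $\min\{f(\ell_1^-),f(\ell_1^+)\}=0$, hence $\ell_1^-=0$. This closes the cycle. I expect the only mild subtlety to be the careful bookkeeping in the observation that $\min\{f(\ell_h^-),f(\ell_h^+)\}=0\iff \ell_h^-=0$ (handling the degenerate corner $\ell_h^+=1$ cleanly); everything else is a direct application of Lemma \ref{l:minmax}. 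One could alternatively argue directly from \eqref{eq:fjpmlog}, but routing through \eqref{eq:minfj} is cleaner and avoids case analysis on the sign of $c_j$ hidden in $L_{1,j}^\pm$.
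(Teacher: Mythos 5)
Your reduction to \eqref{eq:minfj} is sound as far as it goes, but the pivotal ``observation'' --- that $\min\{f(\ell_h^-),f(\ell_h^+)\}=0$ forces $\ell_h^-=0$ --- is false, and this is exactly where the real difficulty of the lemma lies. If $\ell_h^-\in(0,1)$ and $\ell_h^+=1$, then $f(\ell_h^+)=0<f(\ell_h^-)$, so the minimum is $0$ while $\ell_h^-\neq0$. Your justification (``$f(\ell_h^+)=0$ would force $f(\ell_h^-)\le 0$'') is a non sequitur: the minimum of the pair being zero only tells you that the other value is $\ge 0$, which is automatic. Consequently, from $\ell_1^-=0$ and \eqref{eq:minfj} you may only conclude that, for each $j$, either $\ell_j^-=0$ or $\ell_j^+=1$; and the configuration $\ell_j^+=1$, $\ell_j^-\in(0,1)$ is perfectly compatible with the end-state relations \eqref{eq:minfj} and \eqref{eq:fjpmlog} alone (one solves $-\ell_j^-\ln(\ell_j^-)=\alpha_{1,j}v_{1,j}\left(-\ell_1^+\ln(\ell_1^+)\right)$ for $\ell_j^-$, which is possible whenever the right-hand side does not exceed $e^{-1}$). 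The same gap occurs in your step (iii) $\Rightarrow$ (i), where you can only infer $\ell_1^-=0$ or $\ell_1^+=1$.

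The paper's proof must therefore go beyond the algebraic end-state conditions: it assumes by contradiction that $\ell_j^+=1$ with $\ell_j^->0$, observes that then $c_{1,j}<0$ so that $L_{1,j}^-=\ell_1^+$, computes $c_{1,j}$, $A_{1,j}$ and $k_j$ explicitly, and shows that the profile-matching condition \eqref{eq:maratonLOG} --- item (iii) of Proposition \ref{p:TheMarsVoltaLOG}, which involves the diffusivities and is \emph{not} a consequence of \eqref{eq:minfj} --- cannot hold on $(\ell_j^-,1)$: differentiating the resulting identity three times produces two sides of opposite sign. You would need to supply this (or an equivalent) argument to exclude the corner case; without it the equivalence is not established.
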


\begin{proof}
First, we prove that {\em (i)} implies {\em (ii)}. 
Fix $j \in \mathsf{J}$.
Since $\ell_1^-=0$, then condition \eqref{eq:fjpmlog} implies that either $\ell_j^-=0$ or $\ell_j^+=1$, for $j \in \mathsf{J}$. Assume by contradiction that $\ell_j^+=1$. 
Since $c_{1,j}<0$, condition \eqref{eq:fjpmlog} becomes 
\[
\ell_j^-\ln(\ell_j^-)=\alpha_{1,j}\, v_{1,j}\ell_1^+\ln(\ell_1^+).
\]
Therefore, by \eqref{eq:c1jlog}, \eqref{eq:kappaj}$_2$ and  \eqref{eq:kappaj}$_3$ we have that
\begin{align*}
&c_{1,j} = - v_{1,j} ~ \frac{\ell_1^+\ln(\ell_1^+)}{\ell_j^-\ln(\ell_j^-)} ~
\frac{1-\ell_j^-}{\ell_1^+}
= - \frac{1-\ell_j^-}{\alpha_{1,j} \, \ell_1^+},&
&A_{i,j}  = - \frac{1-\ell_j^-}{\ell_1^+},
&k_j =  -1.
\end{align*}
Condition \eqref{eq:maratonLOG} can be written as
\begin{align*}
&\ell \ln(\ell) - v_{1,j} (1-\ell) \left[ \frac{\alpha_{1,j}\, \ell_1^+\ln(\ell_1^+)}{1-\ell_j^-}
+ \frac{1-\ell_j^-}{\alpha_{1,j} \, \delta_{1,j} \, \ell_1^+}  \, \ln \left( \frac{1-\ell}{1-\ell_j^-}\right) \right]=0,
\end{align*}
for $\ell \in (\ell_j^-, 1)$.
By differentiating the above equation three times we obtain
\begin{align*}
&-\frac{v_{1,j}(1-\ell_j^-)}{\alpha_{1,j} \delta_{1,j} \ell_1^+(1-\ell)^2} = \frac{1}{\ell^2},&
&\ell\in (\ell_j^-, 1).
\end{align*}
This is a contradiction because the two sides have opposite sign. This proves \emph{(ii)}.

Since the implication \emph{(ii)} $\Rightarrow$ \emph{(iii)} is obvious, it remains to show that \emph{(iii)} $\Rightarrow$ \emph{(i)}. Let $\ell_{\rm j}^-=0$ for some ${\rm j} \in \mathsf{J}$. By \eqref{eq:fjpmlog} it follows that either $\ell_1^-=0$ or $\ell_1^+=1$. In the latter case by arguing as above it is easy to obtain a contradiction and then \emph{(iii)} follows.
\end{proof}

At last, we give a result which is similar to the one given in Proposition \ref{p:beta2delta}. We denote 
\begin{align*}
&\Delta_j \doteq \left\{\alpha_{1,j}\, \delta_{1,j}, \sqrt{\delta_{1,j}}\right\},&
&j\in \mathsf{J}.
\end{align*}
By Lemma \ref{lem:special} we have either $\ell_h^-=0$, $h \in \mathsf{H}$, or  $\ell_h^-\ne0$, $h \in \mathsf{H}$.
Below we consider the first case.

\begin{proposition}
Problem \eqref{eq:model}-\eqref{eq:Queen} admits a (completely) non-stationary traveling wave with $\ell_h^-=0$, $h \in \mathsf{H}$, if and only if either \eqref{eq:AmonAmarth} holds true or
\begin{equation}
\label{eq:mnk-1}
\begin{array}{c}
\begin{aligned}
0 < &v_{1,j} < \min \Delta_j &\text{or}\ & &v_{1,j} > \max \Delta_j,& &j \in \mathsf{J},
\end{aligned}
\\[5pt]
\left[\alpha_{1,2} \frac{\delta_{1,2}}{v_{1,2}}\right]^{\frac{\delta_{1,2}}{v_{1,2}^2 - \delta_{1,2}}} =
\ldots =
\left[\alpha_{1,n+1} \frac{\delta_{1,n+1}}{v_{1,n+1}}\right]^{\frac{\delta_{1,n+1}}{v_{1,n+1}^2 - \delta_{1,n+1}}}.
\end{array}
\end{equation}

In the first case, problem \eqref{eq:model}-\eqref{eq:Queen} has infinitely many of such waves; each of them satisfies \eqref{eq:Beardfish} and (up to shifts) \eqref{e:dens_cont}.

In the second case, problem \eqref{eq:model}-\eqref{eq:Queen} has a unique (up to shifts) such wave and such wave, which does not satisfy (for no shifts) \eqref{e:dens_cont}. Its end states are
\begin{align}\label{eq:KevinMoore}
& \ell_1^-=0=\ell_j^-,&
&\ell_1^+
=
\left[\alpha_{1,j} \frac{\delta_{1,j}}{v_{1,j}}\right]^{\frac{\delta_{1,j}}{v_{1,j}^2 - \delta_{1,j}}},
&\ell_j^+ = \left[\alpha_{1,j} \frac{\delta_{1,j}}{v_{1,j}}\right]^{\frac{v_{1,j}^2}{v_{1,j}^2 - \delta_{1,j}}}, & & j \in \mathsf{J},
\end{align}
and do not satisfy \eqref{eq:Beardfish}.
\end{proposition}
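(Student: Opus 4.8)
The plan is to specialize condition $(\mathcal{T}_l)$ of Proposition~\ref{p:TheMarsVoltaLOG} to the case $\ell_h^- = 0$ and then solve the resulting algebraic system. By Lemma~\ref{lem:special}, asking $\ell_h^- = 0$ for every $h\in\mathsf{H}$ is the same as asking $\ell_1^- = 0$, and then $\ell_j^- = 0$ for all $j\in\mathsf{J}$; moreover item (i) of $(\mathcal{T}_l)$, i.e.\ $\ell_1^-\ln(\ell_1^-)\ne\ell_1^+\ln(\ell_1^+)$, is automatic once $\ell_1^+\in(0,1)$, so the wave is completely non-stationary. The first step is therefore to rewrite items (ii)--(iii) of $(\mathcal{T}_l)$ under $\ell_h^- = 0$, the logarithmic counterpart of Lemma~\ref{l:CNSGreen}. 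From $\eqref{eq:chlog}_2$ one gets $c_h = -v_h\ln(\ell_h^+) > 0$ for all $h$, hence $c_1 c_j > 0$, $L_{1,j}^\pm = \ell_1^\pm$, and --- using that $k_j$ in $\eqref{eq:kappaj}$ is independent of the choice of sign --- that $k_j = 0$ and $\ell_j^+ = A_{1,j}\ell_1^+$; also $\eqref{eq:c1jlog}$ becomes $c_{1,j} = v_{1,j}\ln(\ell_1^+)/\ln(\ell_j^+)$ and $\eqref{eq:fjpmlog}$ becomes $\ell_j^+\ln(\ell_j^+) = \alpha_{1,j}v_{1,j}\ell_1^+\ln(\ell_1^+)$.

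Next I would use $\eqref{eq:ghlog}$ to obtain $g_h(\ell) - g_h(\ell_h^-) = v_h\,\ell\,\ln(\ell_h^+/\ell)$ when $\ell_h^- = 0$. Substituting this together with $k_j = 0$ and $A_{1,j}\ell_1^+ = \ell_j^+$ into $\eqref{eq:maratonLOG}$, both sides become proportional to $\ell\,\ln(\ell_j^+/\ell)$: the left-hand side equals $\delta_{1,j}v_j\,\ell\,\ln(\ell_j^+/\ell)$ and the right-hand side $c_{1,j}v_1\,\ell\,\ln(\ell_j^+/\ell)$. Since $\ell\,\ln(\ell_j^+/\ell) > 0$ on all of $(0,\ell_j^+)$, condition $\eqref{eq:maratonLOG}$ is equivalent to $c_{1,j} = \delta_{1,j}/v_{1,j}$. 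Combined with $c_{1,j} = v_{1,j}\ln(\ell_1^+)/\ln(\ell_j^+)$ this forces $\ell_j^+ = (\ell_1^+)^{v_{1,j}^2/\delta_{1,j}}$, and inserting this into $\ell_j^+\ln(\ell_j^+) = \alpha_{1,j}v_{1,j}\ell_1^+\ln(\ell_1^+)$ and cancelling the nonzero factor $\ln(\ell_1^+)$ leaves the single scalar equation
\[
(\ell_1^+)^{(v_{1,j}^2 - \delta_{1,j})/\delta_{1,j}} = \frac{\alpha_{1,j}\,\delta_{1,j}}{v_{1,j}},\qquad j\in\mathsf{J}.
\]
Conversely, any $\ell_1^+\in(0,1)$ solving this equation, with $\ell_j^+ \doteq (\ell_1^+)^{v_{1,j}^2/\delta_{1,j}} \in (0,1)$, satisfies items (ii)--(iii) of $(\mathcal{T}_l)$ (one checks $\eqref{eq:fjpmlog}$, $k_j=0$, $\ell_j^+=A_{1,j}\ell_1^+$ and $c_{1,j}=\delta_{1,j}/v_{1,j}$ hold); so the existence of the traveling wave in question is equivalent to the solvability of the displayed equation in $(0,1)$.

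It then remains to discuss that scalar equation. If $v_{1,j}^2 = \delta_{1,j}$ for all $j$, its left-hand side is $1$, so solvability forces $\alpha_{1,j}\delta_{1,j} = v_{1,j}$, equivalently $\eqref{eq:AmonAmarth}$; then $\ell_1^+\in(0,1)$ is free and $\ell_j^+ = \ell_1^+$, yielding infinitely many waves, all obeying $\eqref{eq:Beardfish}$. Moreover $A_{1,j} = \alpha_{1,j}v_{1,j} = 1$ and $c_1 = v_{1,j}c_j$, so $\eqref{eq:1DevinTownsend}$ gives $\phi_j(c_jt) = \phi_1(c_1t)$, i.e.\ $\eqref{e:cont_profiles}$, hence $\eqref{e:dens_cont}$ up to shifts (consistently with Remark~\ref{rem:AmonAmarth}). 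If instead $v_{1,j}^2 \ne \delta_{1,j}$, the equation determines $\ell_1^+ = (\alpha_{1,j}\delta_{1,j}/v_{1,j})^{\delta_{1,j}/(v_{1,j}^2 - \delta_{1,j})}$ for every $j$; equality of these expressions over $j$ is $\eqref{eq:mnk-1}_2$, while $\ell_1^+\in(0,1)$ is equivalent to $\eqref{eq:mnk-1}_1$ by a short sign count --- for $x>0$, $a\ne0$ one has $x^a\in(0,1)$ iff ($x>1$, $a<0$) or ($0<x<1$, $a>0$), and here the base $\alpha_{1,j}\delta_{1,j}/v_{1,j}$ exceeds $1$ iff $\alpha_{1,j}\delta_{1,j}>v_{1,j}$ and the exponent has the sign of $v_{1,j}^2 - \delta_{1,j}$, which yields precisely $v_{1,j}<\min\Delta_j$ or $v_{1,j}>\max\Delta_j$. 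Then $\ell_j^+ = (\ell_1^+)^{v_{1,j}^2/\delta_{1,j}}$ is the value in $\eqref{eq:KevinMoore}$; all end states and speeds being uniquely determined, the wave is unique up to shifts (Theorem~\ref{t:E} and $\eqref{eq:1DevinTownsend}$), and $\ell_j^+ \ne \ell_1^+$ because $v_{1,j}^2 \ne \delta_{1,j}$ with $\ell_1^+\in(0,1)$, so $\eqref{eq:Beardfish}$ fails and hence, by Lemma~\ref{l:+cc}(ii), so does $\eqref{e:dens_cont}$ for every shift.

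I expect the main obstacle to be the reduction of $\eqref{eq:maratonLOG}$ to the identity $c_{1,j} = \delta_{1,j}/v_{1,j}$: it depends on first extracting $k_j = 0$ and $\ell_j^+ = A_{1,j}\ell_1^+$ from the bare definitions $\eqref{eq:kappaj}$, $\eqref{eq:c1jlog}$ and $\eqref{eq:fjpmlog}$, and then on noticing that the common factor $\ell\,\ln(\ell_j^+/\ell)$ does not vanish on $(0,\ell_j^+)$ and so cancels; a secondary, bookkeeping-heavy step is converting $\ell_1^+\in(0,1)$ into the two-sided exclusion in $\eqref{eq:mnk-1}_1$.
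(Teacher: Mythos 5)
Your proposal is correct and follows essentially the same route as the paper: it specializes condition $(\mathcal{T}_l)$ to $\ell_h^-=0$, reduces \eqref{eq:maratonLOG} to $c_{1,j}=\delta_{1,j}/v_{1,j}$ (equivalent to the paper's intermediate relation $\ell_j^+=\alpha_{1,j}\delta_{1,j}v_{1,j}^{-1}\ell_1^+$), and couples it with \eqref{eq:fjpmlog} to obtain the same scalar equation whose solvability in $(0,1)$ yields the dichotomy \eqref{eq:AmonAmarth} versus \eqref{eq:mnk-1} and the end states \eqref{eq:KevinMoore}.
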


\begin{proof} 
Fix $j \in \mathsf{J}$.
Since $c_{1,j} > 0$, by \eqref{eq:c1jlog} the formulas in \eqref{eq:kappaj} and \eqref{eq:ghlog} become
\begin{gather*}
c_{1,j}=v_{1,j} \, \frac{\ln(\ell_1^+)}{\ln(\ell_j^+)},\qquad
A_{1,j}=\alpha_{1,j} \, v_{1,j} \, \frac{\ln(\ell_1^+)}{\ln(\ell_j^+)},\qquad
k_j=0=\kappa_j,\\
g_h(\ell) = v_h \, \ell \, \ln\left(\frac{\ell_h^+}{\ell}\right),\qquad
g_h(0)=0.
\end{gather*}
Hence \eqref{eq:fjpmlog} can be written as 
\begin{equation}
\ell_j^+ \ln(\ell_j^+) = \alpha_{1,j} \, v_{1,j} \, \ell_1^+ \ln(\ell_1^+)
\label{eq:Opeth}
\end{equation}
and therefore \eqref{eq:maratonLOG} becomes
\begin{align*}
\left[ \delta_{1,j} - \frac{v_{1,j} \, \ell_j^+}{\alpha_{1,j} \, \ell_1^+} \right] \ln\left( \frac{\ell_j^+}{\ell} \right) = 0,&
&\ell \in (0, \ell_j^+),
\end{align*}
namely
\begin{equation}\label{eq:syst}
\ell_j^+ = \alpha_{1,j} \, \frac{\delta_{1,j}}{v_{1,j}} \, \ell_1^+.
\end{equation}
System \eqref{eq:Opeth}-\eqref{eq:syst} admits a solution if and only if either \eqref{eq:AmonAmarth} or \eqref{eq:mnk-1} holds true. In  the former case, \eqref{eq:Opeth}-\eqref{eq:syst} has infinitely many solutions and they satisfy \eqref{eq:Beardfish}; in the latter, the unique solution of \eqref{eq:Opeth}-\eqref{eq:syst} is \eqref{eq:KevinMoore}$_{2,3}$. We examine separately these cases.

Assume \eqref{eq:AmonAmarth}. In this case condition ($\mathcal{T}_l$) of Proposition \ref{p:TheMarsVoltaLOG} with $\ell_1^-=0=\ell_j^-$ is equivalent to $\ell_1^+=\ell_j^+ \in (0,1), \, j \in \mathsf{J}$, and then there are infinitely many traveling waves. They all satisfy \eqref{e:dens_cont} by Remark \ref{rem:AmonAmarth}.

Assume \eqref{eq:mnk-1}. In this case condition ($\mathcal{T}_l$) of Proposition \ref{p:TheMarsVoltaLOG} with $\ell_1^-=0=\ell_j^-$ is equivalent to $\ell_h^+ \in (0,1)$, $h \in \mathsf{H}$, satisfying \eqref{eq:Opeth}-\eqref{eq:syst}, namely to \eqref{eq:mnk-1}-\eqref{eq:KevinMoore}.
In particular, \eqref{eq:mnk-1}$_1$, \eqref{eq:KevinMoore} imply that $\ell_j^+$ and $\ell_1^+$ are distinct, namely they do not satisfy \eqref{eq:Beardfish}.
Moreover, by Remark \ref{rem:AmonAmarth} the traveling wave does not satisfies \eqref{e:dens_cont}.

At last, the reverse implications are direct consequences of previous discussion about the solutions of \eqref{eq:Opeth}-\eqref{eq:syst} and then the proof is complete.
\end{proof}

\appendix
\section{Proof of Theorem~\ref{t:E}}\label{s:A}

Let $\ell^\pm_h \in [0, 1]$ with $\ell^{-}_h\ne \ell^{+}_h $.  We introduce the change of variable
\begin{equation}\label{eq:change-1}
r_h \doteq \frac{\ell^{+}_h-\rho_h}{\ell^{+}_h-\ell^{-}_h},
\end{equation}
which implies $\rho_h = \ell^{+}_h - (\ell^{+}_h-\ell^{-}_h) \, r_h$,
$\rho_{h,t} = -(\ell^{+}_h - \ell^{-}_h) \, r_{h,t}$ and
$\rho_{h,x} = -(\ell^{+}_h - \ell^{-}_h) \, r_{h,x}$.
Consequently, equation \eqref{eq:model} can be written
\begin{equation}\label{eq:r-h}
r_{h,t} + G_h(r_h)_{x} = \left(E_{h}(r_h) \, r_{h,x}\right)_x,
\end{equation}
where
\begin{align*}
&G_{h}(r_h) \doteq -\frac{f_{h}\bigl(\ell^{+}_h - (\ell^{+}_h-\ell^{-}_h) \, r_h\bigr)-f_{h}(\ell^{+}_h)}{\ell^{+}_h -\ell^{-}_h},&
&E_{h}(r_h)\doteq D_{h}\bigl(\ell^{+}_h - (\ell^{+}_h-\ell^{-}_h) \, r_h\bigr).
\end{align*}
Furthermore,  equation \eqref{eq:r-h} has a wavefront solution $\psi_h$ from $1$ to $0$ with wave speed $\theta_h$  if and only if equation \eqref{eq:model} has a wavefront solution $\phi_h$ from $\ell^{-}_h$ to $\ell^{+}_h$ with the same speed. Notice that $\psi_h$ satisfies the equation
\begin{equation*}
\left(E_{h}(\psi_h)\psi'_{h}\right)' +\left(\theta_h - G'_{h}(\psi_h)\right) \psi'_h=0
\end{equation*}
and  $\phi_h$ is obtained by $\psi_h$ by the change of variable \eqref{eq:change-1}, i.e.
\begin{align}\label{eq:phi-1}
&\phi_h(\xi) = (\ell^{-}_h -\ell^{+}_h) \, \psi_h(\xi) + \ell^{+}_h,&
&\xi \in \mathbb{R}.
\end{align}

We discuss now  the existence of a wavefront solution $r_h(t,x)=\psi_h(x-\theta_h t+\sigma_h)=\psi_h(\xi)$ of \eqref{eq:r-h}.
In order to make use of \cite[Theorem~9.1]{GK}, we only need to show that
\begin{align}\label{eq:ghnb1}
&-G_{h}(r_{h})>-r_{h} \, G_{h}(1),&
&r_h \in (0,1).
\end{align}
By the definition of $G_h$ we have
\[
-r_h \, G_{h}(1) = -r_h \, \frac{f_{h}(\ell^{+}_h)-f_{h}(\ell^{-}_h)}
{\ell^{+}_h-\ell^{-}_{h}}.
\]
Then, inequality \eqref{eq:ghnb1} is equivalent to
\begin{align*}
&f_{h}(\ell^{+}_h) - \left(f_{h}(\ell^{+}_h)-f(\ell^{-}_h)\right) r_h < f_{h}\bigl(\ell^{+}_h - (\ell^{+}_h-\ell^{-}_h) \, r_h\bigr),&
&\hbox{ for }r_h \in (0,1),
\end{align*}
if and only if $\ell_h^-<\ell_h^+$. By the strict concavity of $f_h$ the last inequality is satisfied and then, by
 \cite[Theorem~9.1]{GK}, we deduce the existence of wavefront solutions $\psi_h$  from $1$ to $0$ for \eqref{eq:r-h}. The wave speed, in this case, is  $\theta_h \doteq G_h(1)$.
Furthermore, the profile $\psi_h$ is unique up to shifts and, if $\psi_h(0)\doteq\nu$ for some $0<\nu< 1$, then
\begin{equation}\label{eq:psi-1}
\begin{cases}
\psi_h(\xi)=1 & \hbox{for }\xi\le \nu_h^-,
\\
\ds\int_{\psi_{h}(\xi)}^{\nu} \frac{E_h(s)}{-G_h(s)+s \, G_h(1)}=\xi & \hbox{for } \nu_h^-<\xi<\nu_h^+ ,\\
\psi_h(\xi)=0 & \hbox{for }\xi\ge \nu_h^+,
\end{cases}
\end{equation}
where
\begin{align*}
&\nu_h^+ \doteq \int_{0}^{\nu} \frac{E_h(s)}{-G_h(s)+ s \, G_h(1)} \, {\rm d} s,&
&\nu_h^- \doteq - \int_{\nu}^{1} \frac{E_h(s)}{-G_h(s)+ s \, G_h(1)} \, {\rm d}s.
\end{align*}
Notice that, by differentiating \eqref{eq:psi-1} in the interval $(\nu_h^-, \nu_h^+)$, we obtain that
\begin{align}\label{e:derivpsi}
&\frac{E_h\left(\psi_h(\xi)\right)}{G_h\left(\psi_h(\xi)\right)-\psi_h(\xi) \, G_h(1)} \, \psi_h'(\xi)=1,&
&\xi \in (\nu_h^-, \nu_h^+),
\end{align}
which implies $\psi_h'<0$ in $(\nu_h^-, \nu_h^+)$ because of \eqref{eq:ghnb1}.

\noindent Consider now $\phi_h$ defined in \eqref{eq:phi-1}; it satisfies \eqref{eq:phi} with $I_h=(\nu_h^-, \nu_h^+)$ and $\phi_h'>0$ in $I_h$. Also condition \eqref{eq:ch} is true and $\phi_h \in \C2(I_h, (\ell^{-}_h, \ell^{+}_h)$ by the regularity of $D_h$ and $f_h$.

\noindent Now it remains to consider the boundary conditions of $\phi_h'$ at the extrema of $I_h$ in the different cases. We have the following.

\begin{itemize}
\item[{\em (i)}] Assume $\ell_h^-=0=D_h(0)$. We show that
\begin{equation}\label{eq:bbbm12}
\nu_h^-= - \int_{\nu}^{1} \frac{E_{h}(s)}{-G_{h}(s)+ s \, G_{h}(1)}\, {\rm d}s >-\infty.
\end{equation}
To prove \eqref{eq:bbbm12}, notice that $E_h(1)=D_h(0)=0$ and that $-G_{h}(s)+ sG_{h}(1) \to 0$ as $s\to 1^-$. In addition, by means of the strict concavity of $f_h$ we obtain that
\[
\lim_{s\to1^-} \frac{E'_{h}(s)}{-G_{h}'(s)+ G_{h}(1)}=\frac{E'_{h}(1)}{-G'_{h}(1)+ G_{h}(1)}=\frac{-\ell_h^+ \, D_h'(0)}{-f_h'(0)+
\frac{f_h(\ell_h^+)}{\ell_h^+}}\ge 0
\]
 and then, by applying de l'Hospital Theorem we prove condition \eqref{eq:bbbm12}. Moreover, by condition \eqref{e:derivpsi}, we get
 \begin{equation*}
\lim_{\xi\downarrow\nu_h^-}\psi_h'(\xi)=\begin{cases}
\ds-\frac{f_h'(0)-\frac{f_h(\ell_h^+)}{\ell_h^+}}{\ell_h^+ \, D_h'(0)}& \hbox{if } D_h'(0)>0,\\
-\infty & \hbox{if } D_h'(0)=0.
\end{cases}
\end{equation*}
By applying \eqref{eq:phi-1} we conclude that $\phi_h(\xi)=\ell_h^-$ for $\xi \le \nu_h^-$ and the estimates in \eqref{e:slope0} are satisfied.
Furthermore, by the change of variables \eqref{eq:change-1}, we obtain that
\begin{equation*}
\lim_{\xi\downarrow\nu_h^-}D_h(\phi_h(\xi))\phi_h^{\prime}(\xi)=\lim_{\xi\downarrow\nu_h^-}-\ell_h^+E_h(\psi_h(\xi))\psi_h^{\prime}(\xi)
\end{equation*}
and hence, by \eqref{e:derivpsi}, we deduce \eqref{eq:muh}.

\item[{\em (ii)}] Assume $1-\ell_h^+=0=D_h(1)$. With a similar reasoning as in \emph{(i)} we prove that $\nu_h^+>-\infty$. In fact, $E_h(0)=D_h(1)=0$ and $-s \, G_h(s)+s \, G_h(1) \to 0$ as $s \to 0^+$. Moreover
 \begin{equation*}
\lim_{s\to 0^+} \frac{E'_{h}(s)}{-G_{h}'(s)+ G_{h}(1)}=\frac{E'_{h}(0)}{-G'_{h}(0)+ G_{h}(1)}=\frac{\left( 1-\ell_h^- \right)\, D_h'(1)}{f_h'(1)+
\frac{f_h(\ell_h^-)}{1-\ell_h^-}}\ge 0
\end{equation*}
 and again, by applying de l'Hospital Theorem we prove that $\nu_h^+>-\infty$. Moreover, by the estimate \eqref{e:derivpsi}, we have that
\begin{equation*}
\lim_{\xi\uparrow\nu^h}\psi_h'(\xi)=\begin{cases}
\ds\frac{\frac{f_h(\ell_h^-)}{1-\ell_h^-}+f_h'(1)}{\left(\ell_h^- -1\right)\, D_h'(1)}& \hbox{if } D_h'(1)<0,\\
-\infty & \hbox{if } D_h'(1)=0.
\end{cases}
\end{equation*}
By applying \eqref{eq:phi-1} we conclude that $\phi_h(\xi)=\ell_h^+$ for $\xi \ge \nu_h^+$ and the estimates in \eqref{e:slope1} are satisfied; by \eqref{eq:change-1} and \eqref{e:derivpsi} we derive \eqref{eq:nuh}.

\item[{\em (iii)}] \ In all the other cases it is easy to show that $I_h=\mathbb{R}$ and again the slope condition \eqref{eq:TosinAbasi} can be obtained by the estimate \eqref{e:derivpsi}.
\end{itemize}

\section*{Acknowledgments}
The authors are members of the {\em Gruppo Nazionale per l'Analisi Matematica, la Probabilit\`a e le loro Applicazioni} (GNAMPA) of the {\em Istituto Nazionale di Alta Matematica} (INdAM). They were supported by the Project {\em Macroscopic models of traffic flows: qualitative analysis and implementation}, sponsored by the University of Modena and Reggio Emilia.
The first author was also supported by the project Balance Laws in the Modeling of Physical, Biological and Industrial Processes of GNAMPA.

{\small
\bibliography{refe_semifronts}
\bibliographystyle{abbrv}
}

\end{document}